\RequirePackage{rotating}
\documentclass[a4paper, english, 9pt]{amsart}

%
\headheight=0in
\headsep =-0.54in
\topmargin=0in
\textheight=10.52in
\textwidth=7.7in
\oddsidemargin=-0.63in
\evensidemargin=-0.63in
\usepackage{listings}
\usepackage{rotating}

\usepackage{amsthm,amssymb,url,hyperref}
\usepackage{fullpage}
\usepackage{MnSymbol}
\usepackage[all]{xy}
\usepackage{tikz}
\usepackage{subfig} 
\usepackage{caption}
\usepackage{forest}
\usepackage{color}
\usepackage{amsaddr}
\usepackage{mathtools}
\usepackage{enumitem}
\usepackage{hyperref}
\usepackage{booktabs} 

\theoremstyle{plain}
\newtheorem{theorem}{Theorem}[section]

\newtheorem*{theorem*}{Theorem}

\newtheorem{corollary}[theorem]{Corollary}

\newtheorem{lemma}[theorem]{Lemma}

\newtheorem{proposition}[theorem]{Proposition}

\theoremstyle{definition}

\def\ker#1{\mathrm{ker}(#1)}

\def\aut#1{\mathrm{Aut}(#1)}

\def\End#1{\mathrm{End}(#1)}
\def\aff#1{\mathrm{Aff}#1}
\def\Aff#1{\mathrm{Aff}#1}
\def\lmlt{\mathrm{LMlt}}
\def\dis{\mathrm{Dis}}
\def\fix{\mathrm{Fix}}
\def\sym{\mathrm{Sym}}
\def\syl{\mathrm{Syl}}
\def\Sym{\mathrm{Sym}}

\def\con{\mathrm{Con}}

\def\comment#1{{\color{red} #1}}
\def\c#1{\mathrm{con}_{#1}}
\def\Q{\mathcal{Q}_{\mathrm{Hom}}}

\def\setof#1#2{\{#1\,\colon\,#2\}}

\def\Z{\mathbb Z}

\def\Ma{\mathcal Max}

\def\ma{\mathcal Min}

\def\Q{\mathcal Q}

\def\LSS{\mathcal{LSS}}

\def\cg#1{\equiv_\alpha}
\newcommand*\xbar[1]{%
   \hbox{%
     \vbox{%
       \hrule height 0.5pt 
       \kern0.5ex
       \hbox{%
         \kern-0.1em
         \ensuremath{#1}%
         \kern-0.1em
       }%
     }%
   }%
} 
\def\fs#1{{\color{cyan} #1}}

\title{latin quandles of size $16p$}
\author{Marco Bonatto \and Filippo Spaggiari}
\email[1]{marco.bonatto.87@gmail.com}
\email[2]{spaggiari@karlin.mff.cuni.cz}

\begin{document}

\begin{abstract}
    In this paper we obtain the classification of latin quandles of size $16p$ where $p$ is an odd prime. In particular we have that such quandles are always subdirectly reducible but in some cases for small primes. Specifically, we have that latin quandles of size $16p$ are affine if $p\neq 1 \pmod{3}$ or $p\neq 3,5$. If $p=1\pmod 3$ there are $2$ subdirectly reducible non directly decomposable latin quandles of size $16p$ for every prime with $p=1\pmod{3}$ and there are one subdirectly irreducible latin quandle of size $16p$ for $p=3,5$. We provide explicit constructions as coset quandles for all the quandles mentioned above.
\end{abstract}
\maketitle

\section*{Introduction}

Quandles are binary algebras introduced in the framework of knot theory to construct knot invariants \cite{J, Matveev}. Beyond their origins, they have proven to be deeply connected to various other mathematical structures, such as solutions to the set-theoretical quantum Yang-Baxter equation \cite{etingof2001indecomposable, etingof1999set} and pointed Hopf algebras \cite{AG}. These connections underline the importance of quandles as versatile and foundational objects in both algebra and topology.

Quandles have been extensively studied through both group and module theory. In addition, universal algebra offers an important framework for investigating the complex relationship between properties of the congruence lattice and those of the so-called displacement group, which encodes many properties of the quandle itself. Indeed, there are two Galois connections between the congruence lattice of a quandle and the lattice of normal subgroups of the left multiplication group contained in the displacement group. This rich interplay between the two lattices has proven crucial in addressing classification problems for finite quandles, facilitating a deeper understanding of their structure and properties. Moreover, it has also been fundamental in developing commutator theory for quandles. Commutator theory, initially developed for general algebraic structures in \cite{comm}, has been specialized for quandles in \cite{CP}.

In this paper, we use this set of tools to obtain the classification of latin quandles of size $16p$, where $p$ is a prime, taking advantage of the classification of latin quandles of size $4,16,p$, and $4p$ already available \cite{ESS, LSS, RIG}, as well as the homogeneous representation of connected quandles over the displacement group \cite{J}. Using this machinery, we can translate the classification problem into a purely group-theoretical one. Universal algebraic techniques are primarily used to identify the groups of interest for obtaining a homogeneous representation, assuming that the congruence lattice of the structure is known.

The core of the paper lies in Section \ref{sec:latin16p}, where we also collect existing results on the classification of small latin quandles and prove the main classification results for latin quandles of size $16p$ (completed in Section \ref{computational} with some computational results concerning the cases $p=3,5$). This classification, achieved through a detailed case analysis, demonstrates the power of the aforementioned techniques. Specifically, the classification strategy proceeds as follows:
\begin{itemize}
\item[(i)] Determine the structure of the congruence lattice;
\item[(ii)] Use the Galois correspondence between the congruence lattice and the lattice of normal subgroups of the left multiplication group to identify the displacement group;
\item[(iii)] Establish whether the groups satisfying the required properties exist and, if they do, whether their automorphisms give rise to coset quandles with the desired properties (in this case, being latin of size $16p$).
\end{itemize}
In the paper, we take advantage of the fact that finite latin quandles are solvable (in the sense of \cite{comm}) and so the description of finite simple latin quandles is completely understood. In particular, such quandles are affine over elementary abelian $p$-groups. 

The algorithm for classification is used to investigate directly indecomposable quandles. On the other hand, directly decomposable latin quandles of size $16p$ are known since the quandles of size $4,16,p$ and $4p$ are classified. If $p\neq 1\pmod{3}$ the directly decomposable latin quandles of size $16p$ are affine, as they are the direct product of the $9$ latin quandles of size $16$ and one of the $p-2$ latin quandles of size $p$ and such quandles are affine. If $p= 1\pmod{3}$ we also have the direct product of the unique latin quandle of size $4$ and one of the two non-affine latin quandles of size $4p$ described in \cite{LSS}.

Combining the computational results from Section \ref{computational}, Proposition \ref{group K}, and Theorem \ref{sr quandles} (in which we also provide an explicit coset representation for the quandles obtained) with the enumeration mentioned above, we achieve our main classification results, summarized in Table \eqref{enum}.


\begin{table}[ht]
  \centering
  \begin{tabular}{cccc} 
    \toprule
    $p$ & Subdirectly irreducible & Directly decomposable (DD) & Subdirectly reducible (not DD) \\
    \midrule
    $p=1\pmod{3}$ & 0 & $9(p-2)+2$ & $2$ \\
    $p\neq 1\pmod{3}, \, p\neq 3,5 $ & 0 & $9(p-2)$ & 0 \\
    3 & 1 & 9 & 0 \\
    5 & 1 & 27 & 0 \\
    \bottomrule
  \end{tabular}\caption{Enumeration of latin quandles of size $16p$.}\label{enum}
\end{table}

%

In detail, the structure of the paper is as follows. Section \ref{sec:preliminary} collects foundational concepts from group theory and universal algebra. These preliminaries establish the necessary background for subsequent sections and include results concerning the congruence lattice of specific structures. For additional information on universal algebra, the reader may refer to \cite{bergman2011universal}, which provides an accessible introduction to the subject.

Section \ref{sec:quandles} introduces essential notations and concepts from quandle theory, including related groups of automorphisms. This section also presents key theorems addressing isomorphism problems and connections between the normal subgroups of the displacement group and the congruences of the quandle, laying the groundwork for the main results of the paper.

Section \ref{sec:latin16p} contains all the classification results, including the computational ones.

Finally, Section \ref{sec:appendix} presents the purely group-theoretical results used throughout the paper. 

\section{Preliminary results} \label{sec:preliminary}
\subsection{Group Theory}

We denote by $\Sym_Q$ the symmetric group over a non-empty set $Q$. An \emph{action} of a group $G$ on a set $Q$ is a group homomorphism $$\rho:G\longrightarrow \Sym_Q,\quad g\mapsto \rho_g.$$ We say that an action is \emph{faithful} if the map $\rho$ is injective, i.e. $\ker{\rho}=1$. For $x\in Q$, we denote by $G_x=\{g\in G\,\colon\, \rho_g(x)=x\}$ the \emph{stabilizer} of $x$ in $G$, and by $x^G=\{\rho_g(x)\,\colon\ g\in G\}$ the \emph{orbit} of $x$ with respect to the action of $G$.

We denote by $\aut{G}$ the group of automorphisms of a group $G$. The \emph{inner automorphism} with respect to $g\in G$ is the automorphism $\widehat{g}$ defined by setting 
\[
\widehat{g}(x) = gxg^{-1}, \quad \text{for every } x \in G.
\]
 
 If $h\in \aut{G}$ and $N$ is a $h$-invariant normal subgroup of $G$ (i.e. $h(N)=N$) then $h$ induces an automorphism on the factor $G/N$.

\begin{lemma}
    Let $N$ be a normal subgroup of a group $G$, and let $f\in\aut{G}$ be such that $f(N)=N$. Then the map $$f_N:G/N\longrightarrow G/N,\quad xN \mapsto f(x)N$$ is a well defined automorphism of $G/N$.
\end{lemma}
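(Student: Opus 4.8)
The plan is to check the three defining requirements in turn---well-definedness, the homomorphism property, and bijectivity---while keeping track of exactly where the hypothesis $f(N)=N$ is used. In fact the equality $f(N)=N$ (as opposed to the weaker inclusion $f(N)\subseteq N$) is what makes both well-definedness and injectivity work, and this dependence is the only subtlety worth flagging.

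First I would establish well-definedness. Suppose $xN=yN$, so that $y^{-1}x\in N$. Since $f$ is an automorphism and $f(N)=N$, we get $f(y)^{-1}f(x)=f(y^{-1}x)\in f(N)=N$, and therefore $f(x)N=f(y)N$. This shows the value $f_N(xN)$ is independent of the chosen representative. The homomorphism property is then immediate from the fact that $f$ is a homomorphism: $f_N(xN\cdot yN)=f_N(xyN)=f(xy)N=f(x)f(y)N=(f(x)N)(f(y)N)=f_N(xN)f_N(yN)$.

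For bijectivity I would compute the kernel and invoke surjectivity of $f$. If $f_N(xN)=N$, then $f(x)\in N=f(N)$, so $f(x)=f(n)$ for some $n\in N$; injectivity of $f$ forces $x=n\in N$, hence $xN=N$ and $\ker{f_N}$ is trivial. Surjectivity is equally direct: given any coset $zN$, the surjectivity of $f$ yields some $x\in G$ with $f(x)=z$, whence $f_N(xN)=zN$. Alternatively, one can exhibit the inverse explicitly as $(f^{-1})_N$, noting that $f^{-1}(N)=N$ follows by applying $f^{-1}$ to $f(N)=N$, so that $f^{-1}$ satisfies the same hypotheses and induces a map on $G/N$ visibly inverse to $f_N$. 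Since there is no genuine obstacle here, the only point requiring care is the consistent use of the full equality $f(N)=N$ in both the well-definedness and the injectivity arguments.
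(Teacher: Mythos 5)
Your proposal is correct and follows essentially the same route as the paper's proof: check well-definedness from $f(N)=N$, verify the homomorphism property, and establish bijectivity via triviality of the kernel and surjectivity of $f$. You are slightly more explicit than the paper at the points it glosses over (the paper's ``clearly $f$ is onto'' and its one-line injectivity step), but there is no substantive difference in approach.
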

\begin{proof}
    The map $xN\mapsto f(x)N$ is well defined. In fact, if $x^{-1}y\in N$ then $f(x^{-1}y)=f(x)^{-1}f(y)\in N$. Clearly $f$ is onto. Moreover 
    \begin{align*}
    f_N((xN)^{-1} yN)&=f_N(x^{-1}yN)=f(x^{-1}y)N=f(x^{-1})f(y)N\\&=f(x)^{-1}N f(y)N=f_N(xN)^{-1}f_N(yN).        
    \end{align*}
 Therefore $f_N\in \End{G/N}$. If $f_N(xN)=f(x)N=N$ then $f(x)\in N$ and $x\in N$ accordingly. So $f_N$ is injective.
\end{proof}

Let $N$ and $H$ be two groups, and let $\rho: H \to \aut{N}$ be a group action. The \emph{semidirect product} of $N$ and $H$ induced by $\rho$, denoted $N \rtimes_\rho H$, is the group $(N \times H,\cdot)$ where the operation $\cdot$ is defined by setting
\[
(n_1, h_1) \cdot (n_2, h_2) = (n_1 \rho_{h_1}(n_2), h_1 h_2).
\]
for every $n_1,n_2\in N$ and $h_1,h_2\in H$.

\subsection{Universal Algebra}

An {\it operation} of finite arity $n$ on a set $A$ is a map from $A^n$ to $A$. A (finitary) \emph{algebra} is a pair $\langle A, F\rangle$ where $A$ is a non-empty set and $F=\left\langle f_i: i \in I\right\rangle$ is a family of operations on $A$, indexed by some set $I$. The operations $\setof{f_i}{i\in I}$ are the  {\it basic operations} of $(A,F)$. The function $\rho: I \rightarrow \mathbb{N}$ which assigns to each $i \in I$ the arity of $f_i$ is called \emph{similarity type} of A. 
A {\it term} is any meaningful composition of basic operations in $F$. A {\it Malt'sev term} is a ternary term $m(x,y,z)$ satisfying $m(y,x,x)\approx m(x,x,y)\approx y$. An algebra with a Malt'sev term is called a {\it Malt'sev algebra}.




A \emph{subalgebra of $A$} is a subset $B\subseteq A$ that is closed with respect to every basic operation in $F$.

A \emph{congruence} on an algebra $(A,F)$ is an equivalence relation $\theta$ on $A$ such that for every basic operation $f \in F$ of arity $n$ and every $a_i, b_i \in A$ with $a_i\, \theta\, b_i$ for $i = 1, \ldots, n$, we have $f(a_1, \ldots, a_n)\,\theta\, f(b_1, \ldots, b_n)$. The congruence lattice of an algebra $(A,F)$ is denoted by $\con(A,F)$ (or simply by $\con(A)$). We denote the congruence class of an element $x$ with respect to $\alpha\in \con(A)$ by $[x]_\alpha$ (or simply by $[x]$) and the set of classes by $A/\alpha$. The top element of $\con(A)$ is $1_A=A\times A$ and the bottom element of $\con(A)$ is the identity relation, denoted by $0_A$.

An algebra is said to be:

\begin{enumerate}
    \item \emph{directly decomposable} if it can be expressed as a direct product of two non-trivial algebras. Equivalently, an algebra is directly decomposable if and only if there are two congruences $\alpha$ and $\beta$ such that $\alpha\wedge\beta=0_A$ and $\alpha\circ\beta=1_A$ ($\circ$ is the usual composition of binary relations). If an algebra is not directly decomposable, it is called \emph{directly indecomposable};

\item \emph{subdirectly reducible} if there exists $\setof{\alpha_i}{i\in I}\subseteq \con(A)\setminus \{0_A\}$ such that $\wedge_{i\in I} \alpha_i= 0_A$. Otherwise, $A$ is said to be {\it subdirectly irreducible}. It is known that subdirectly irreducible algebras are exactly those algebra having a smallest non-trivial congruence. Subdirectly irreducible algebras are clearly directly indecomposable;

\item \emph{simple} if it has no non-trivial proper congruences, i.e., the only congruences of $A$ are $0_A$ and $1_A$. 
\end{enumerate}

A \emph{homomorphism} between two algebras $(A, F)$ and $(B, G)$ of the same similarity type is a function $\varphi: A \rightarrow B$ that preserves all operations in $F$. Specifically, for each $n$-ary basic operation $f_i \in F$, the function $\varphi$ satisfies $\varphi(f_i(a_1, \ldots, a_n)) = g_i(\varphi(a_1), \ldots, \varphi(a_n))$ for all $a_1, \ldots, a_n \in A$. If $\varphi$ is a bijection, then $\varphi$ is called an \emph{isomorphism}.

An \emph{automorphism} of an algebra $(A, F)$ is an isomorphism from the algebra to itself. The set of all automorphisms of an algebra with the composition of maps is a group called \emph{automorphism group} of the algebra. We denote the group of automorphisms of $(A,F)$ by $\aut{A,F}$ (or simply by $\aut{A}$).

The commutator theory for general algebras was introduced in \cite{comm} in order to extend the well-known concept of commutator of subgroups to the notion of commutator of congruences of any algebraic structure. The notion of {\it central} and {\it abelian} congruences is needed in the broader framework of general algebra to define {\it solvability} and {\it nilpotence} through the existence of special series of congruences named \emph{central series} and \emph{derived series} as for groups. To do so, given an algebra $A$ and $\alpha,\beta$ being congruences of $A$, the {\it commutator} $[\alpha,\beta]$ is a congruence of $A$ satisfying a certain term condition that for groups reduces to the usual notions of the commutator of normal subgroups. We are not going to explicitly describe such conditions and we refer the reader to \cite{comm} for further details (and to \cite{CP} for the case of quandles). Based on the notion of commutator of congruences, we say that $\alpha$ is {\it abelian} (resp. {\it central}) if $[\alpha,\alpha]=0_A$ (resp. $[\alpha,1_A]=0_A$). The largest central congruence is called the {\it center} of $A$ and is denoted by $\zeta_A$. 

The derived and central series are defined, respectively, as 
\begin{align*}
    &\gamma^0(A)=\gamma_0(A)=1_A,\quad \gamma^{i+1}(A)=[\gamma^i(A),\gamma^i(A)], \quad \gamma_{i+1}(A)=[\gamma_i(A),1_A],\text{ for } i\geq 0.
\end{align*}

An algebra is {\it solvable} (resp. {\it nilpotent}) of length $n$ if $\gamma^n(A)=0_A$ (resp. $\gamma_n(A)=0_A$). If $1_A$ is abelian, then $A$ is said to be {\it abelian} (equivalently $\gamma_1(A)=0_A$ or $\zeta_A=1_A$). The congruence $\gamma_1(A)$, simply denoted by $\gamma_A$, is the smallest congruence of $A$ with the abelian factor. Thus, given a congruence $\alpha$, the factor algebra $A/\alpha$ is abelian if and only if $\gamma_A \leq \alpha$. In particular, solvable simple algebras are abelian (indeed, if $A$ is simple and solvable then $\gamma_A\neq 1_A$ and so necessarily $\gamma_A=0_A$).

\subsection{Congruence slim algebras}\label{slim}

Let $A$ be an algebra. Recall that a {\it maximal} (resp. {\it minimal}) congruence is a congruence $\alpha\neq 0_A,1_A$ such that if $\alpha< \beta$ (resp. $\beta<\alpha$) then $\beta=1_A$ (resp. $\beta=0_A$). We denote by $\Ma_A$ (resp. $\ma_A$) the set of the maximal (resp. minimal) congruences of $A$ and we define 
\begin{align*}
\mu_A =\bigwedge_{\alpha\in \Ma_A} \alpha, \qquad
\nu_A =\bigvee_{\alpha\in \ma_A} \alpha.
\end{align*}

\begin{lemma}\label{cong of directly irred}
Let $A$ be a finite, directly indecomposable Malt'sev algebra. Then: 
\begin{enumerate}
    \item[(i)] $\nu_A\leq \mu_A$. 
    \item[(ii)] $\Ma_A\cap \ma_A\neq \emptyset$ if and only if $\con(A)$ is the three-element chain.
\end{enumerate}

\end{lemma}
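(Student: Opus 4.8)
The plan is to base everything on the single observation that a Malt'sev algebra has permutable congruences, so that for any two congruences $\theta,\psi$ one has $\theta\vee\psi=\theta\circ\psi$. Consequently a pair of congruences $\alpha,\beta$ with $0_A<\alpha,\beta<1_A$, $\alpha\wedge\beta=0_A$ and $\alpha\vee\beta=1_A$ would be a pair of complementary factor congruences, yielding an isomorphism $A\cong A/\alpha\times A/\beta$ onto a product of two non-trivial algebras. Since $A$ is directly indecomposable, no such pair can exist. The whole lemma reduces to ruling out such complementary atom/coatom configurations, and permutability (the Malt'sev hypothesis) is exactly what upgrades the lattice-theoretic relations $\alpha\wedge\beta=0_A$, $\alpha\vee\beta=1_A$ into a genuine direct decomposition.

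For (i) I would argue by contradiction, assuming $\nu_A\not\leq\mu_A$. Because $\nu_A=\bigvee_{\alpha\in\ma_A}\alpha$, were every minimal congruence below $\mu_A$ then their join $\nu_A$ would be as well; hence there is $\alpha\in\ma_A$ with $\alpha\not\leq\mu_A$. Similarly, because $\mu_A=\bigwedge_{\beta\in\Ma_A}\beta$, were $\alpha$ below every maximal congruence it would be below $\mu_A$; hence there is $\beta\in\Ma_A$ with $\alpha\not\leq\beta$. Now $\alpha$ is an atom and $\beta$ a coatom of $\con(A)$: from $\alpha\not\leq\beta$ and minimality of $\alpha$ one gets $\alpha\wedge\beta<\alpha$, so $\alpha\wedge\beta=0_A$; from $\alpha\not\leq\beta$ and maximality of $\beta$ one gets $\beta<\alpha\vee\beta$, so $\alpha\vee\beta=1_A$. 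Since $\alpha,\beta$ lie strictly between $0_A$ and $1_A$, the observation above yields a non-trivial direct decomposition, contradicting indecomposability.

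For (ii), the backward implication is immediate: if $\con(A)$ is the three-element chain $0_A<\gamma<1_A$, then $\gamma$ is simultaneously the unique maximal and the unique minimal congruence, so $\gamma\in\Ma_A\cap\ma_A$. For the forward implication I would take $\gamma\in\Ma_A\cap\ma_A$, so that $\gamma$ is at once an atom and a coatom, and show $\con(A)=\{0_A,\gamma,1_A\}$. Any further congruence $\delta\neq 0_A,\gamma,1_A$ cannot satisfy $0_A<\delta<\gamma$ (since $\gamma$ is an atom) nor $\gamma<\delta<1_A$ (since $\gamma$ is a coatom), so $\delta$ is incomparable with $\gamma$; then $\gamma\wedge\delta<\gamma$ forces $\gamma\wedge\delta=0_A$ and $\gamma\vee\delta>\gamma$ forces $\gamma\vee\delta=1_A$, and again the observation produces a non-trivial decomposition $A\cong A/\gamma\times A/\delta$, contradicting indecomposability. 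Hence no such $\delta$ exists.

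The main obstacle, and the only place the hypotheses are genuinely used, is the passage from the lattice identities $\alpha\wedge\beta=0_A$, $\alpha\vee\beta=1_A$ to an actual direct product: finiteness guarantees the existence of atoms and coatoms (so that $\ma_A,\Ma_A$ are non-empty whenever $A$ is not simple), the Malt'sev term gives the permutability $\alpha\vee\beta=\alpha\circ\beta$, and direct indecomposability supplies the contradiction. I should also dispatch the degenerate simple case separately: then $\Ma_A=\ma_A=\emptyset$, whence $\mu_A=1_A$, $\nu_A=0_A$ and (i) holds trivially, while $\Ma_A\cap\ma_A=\emptyset$ and $\con(A)$ is the two-element chain, so (ii) holds vacuously on both sides.
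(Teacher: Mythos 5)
Your proof is correct and rests on the same key observation as the paper's: permutability of congruences in a Malt'sev algebra turns any pair $\alpha,\beta$ with $\alpha\wedge\beta=0_A$ and $\alpha\vee\beta=1_A$ into a direct decomposition, which indecomposability forbids, and part (i) is just the contrapositive of the paper's direct argument that every minimal congruence lies below every maximal one. The only cosmetic difference is in (ii): the paper deduces $\Ma_A=\ma_A=\{\alpha\}$ from part (i) and then uses finiteness to place every proper congruence below a maximal one, whereas you rule out any extra congruence directly by the same incomparability-plus-decomposition trick; both work.
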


\begin{proof}
(i) Let $\alpha\in \ma_A$ and $\beta\in \Ma_A$. Recall that for Malt'sev algebra we have $\alpha\circ \beta=\alpha\vee\beta$. Then $\alpha\wedge\beta\leq \alpha$ and $\beta\leq\alpha\circ \beta$. Hence, if $\alpha$ is not contained in $\beta$ we have that $\alpha\wedge\beta=0_A$ and $\alpha\circ\beta=1_A$ implying that $A$ is directly decomposable. Thus $\alpha\leq \beta$ for all $\alpha\in \ma_A$ and $\beta\in \Ma_A$, then accordingly $\nu_A\leq\mu_A$. 

(ii) Clearly, if $\con(A)$ is the chain of length $3$, then the unique proper congruence is both maximal and minimal. On the other hand, assume that $\alpha\in \Ma_A\cap \ma_A$. Then we have that
$$\nu_A\leq \mu_A\leq \alpha\leq \nu_A\leq \mu_A$$
and therefore $\nu_A= \mu_A=\alpha$, i.e. $\Ma_A=\ma_A=\{\alpha\}$. If $\beta$ is a congruence of $A$ then it is contained in a maximal congruence, and so $\beta\leq \alpha$. Hence, either $\beta=0_A$ or $\beta=\alpha$. So we see that $\alpha$ is the unique proper non-trivial congruence of $A$. 
%
%
\end{proof}

Let $A$ be an algebra $A$. The \emph{height} of $\con(A)$ is the maximum of the chains' lengths of $\con(A)$. If $\con(A)=\Ma_A\cup \ma_A\cup \{0_A,1_A\}$ we say that $A$ is \emph{congruence slim}. We have that congruence slim algebras are those for which the height of $\con(A)$ is less than or equal to $4$.

\begin{lemma}\label{llength 4}
Let $A$ be a directly indecomposable algebra. The following are equivalent: 
\begin{itemize}
    \item[(i)] $A$ is congruence slim. 
    \item[(ii)] $\con(A)$ has height less than or equal to $4$. 
\end{itemize}    
\end{lemma}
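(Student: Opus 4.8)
The plan is to prove the equivalence directly from the definitions, observing that the whole statement reduces to a single combinatorial fact about chains. Throughout I read the height condition (ii) as the assertion that every chain of $\con(A)$ has at most four elements, equivalently that no chain contains three distinct congruences lying strictly between $0_A$ and $1_A$. With this reading, the two conditions are simply two ways of forbidding a proper nontrivial congruence that is simultaneously non-minimal and non-maximal, so the argument is short.

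For (ii) $\Rightarrow$ (i) I would argue contrapositively. Suppose $A$ is not congruence slim, so there is $\gamma\in\con(A)$ with $\gamma\notin \Ma_A\cup\ma_A\cup\{0_A,1_A\}$. Since $\gamma\neq 0_A,1_A$ but $\gamma$ is not minimal, there is a congruence $\beta$ with $0_A<\beta<\gamma$; since $\gamma$ is not maximal, there is $\varepsilon$ with $\gamma<\varepsilon<1_A$. Then $0_A<\beta<\gamma<\varepsilon<1_A$ is a chain with five distinct elements, so the height of $\con(A)$ exceeds $4$, contradicting (ii). Hence every proper nontrivial congruence lies in $\Ma_A\cup\ma_A$, i.e.\ $A$ is congruence slim.

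For (i) $\Rightarrow$ (ii) I would again reason by contradiction. Assume $A$ is congruence slim and suppose some chain of $\con(A)$ has at least five elements. At most two of them can equal $0_A$ or $1_A$, so the chain contains three distinct proper nontrivial congruences $p<q<r$. By congruence slimness $q\in\Ma_A\cup\ma_A$. But $p<q$ with $p\neq 0_A$ shows that $q$ is not minimal, and $q<r$ with $r\neq 1_A$ shows that $q$ is not maximal, a contradiction. Therefore every chain has at most four elements and the height is at most $4$.

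The only real subtlety, which I would flag as the main (mild) obstacle, is bookkeeping rather than mathematics: one must fix the convention that the length of a chain is its number of elements (so that a maximal configuration $0_A<\alpha<\beta<1_A$ has height exactly $4$), and one must handle the edge cases in the second direction where $0_A$ or $1_A$ occurs among the five chain elements. Note that neither direction uses direct indecomposability nor Lemma \ref{cong of directly irred}: the equivalence is a purely order-theoretic consequence of the definitions of minimal/maximal congruence and of congruence slimness, so the hypothesis that $A$ be directly indecomposable is not actually needed here and is merely inherited from the surrounding discussion.
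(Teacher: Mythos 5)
Your proof is correct and follows essentially the same route as the paper's: both directions are the same purely order-theoretic chain argument (a non-minimal, non-maximal proper congruence sits inside a five-element chain, and conversely a five-element chain forces such a congruence). Your observation that direct indecomposability is never used is accurate --- the paper's proof does not invoke it either --- and your care with the convention that height counts elements (so $0_A<\alpha<\beta<1_A$ has height $4$) matches the paper's usage.
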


\begin{proof}

(i) $\Rightarrow$ (ii) Let $0_A< \alpha_1< \ldots \alpha_n< 1_A$ be a chain of distinct congruences of maximal length. Then $\alpha_1\in \ma_A, \alpha_{2}\in \Ma_A$, so $n=2$ and then the maximal length of a chain of congruences is $4$.

(ii) $\Rightarrow$ (i) Let $\alpha$ be a congruence that is not in $\Ma_A\cup \ma_A\cup \{0_A,1_A\}$. Then there exists $\beta,\delta\in \con(A)$ such that $0_A<\beta<\alpha<\delta<1_A$. Then we have a chain of length $5$, which is a contradiction. 
\end{proof}

\begin{proposition}\label{lattices}
Let $A$ be a directly indecomposable, congruence slim Malt'sev algebra. Then one of the following holds:
\begin{itemize}
\item[(i)] $\con(A)$ is a chain of length less than or equal to $4$.
\item[(ii)] $\con(A)$ is
\begin{center}
		$\xymatrixrowsep{0.15in}
		\xymatrixcolsep{0.15in}
		\xymatrix{ 
			& & 1_A  \ar@{-}[d]\ar@{-}[dr]\ar@{-}[drr] \ar@{-}[dl] \ar@{-}[dll] & &\\
			\alpha_1 \ar@{-}[drr] & \alpha_2\ar@{-}[dr] &\alpha_3\ar@{-}[d] & \ldots\ar@{-}[dl] & \alpha_n \ar@{-}[dll]\\
			& & \mu_A\ar@{-}[d] &  &\\
			& & 0_A & &
		}$
		\end{center}
		for some $n\in \mathbb{N}$.
		
		\item[(iii)] $\con(A)$ is
\begin{center}
		$\xymatrixrowsep{0.15in}
		\xymatrixcolsep{0.15in}
		\xymatrix{ 
	& & 1_A\ar@{-}[d] & &\\		
			& & \nu_A  \ar@{-}[d]\ar@{-}[dr]\ar@{-}[drr] \ar@{-}[dl] \ar@{-}[dll] & &\\
			\beta_1 \ar@{-}[drr] & \beta_2\ar@{-}[dr] &\beta_3\ar@{-}[d] & \ldots\ar@{-}[dl] & \beta_n \ar@{-}[dll]\\
			& & 0_A &  &\\
		}$
		\end{center}
		for some $n\in \mathbb{N}$.
\end{itemize}
\end{proposition}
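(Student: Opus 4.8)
The plan is to exploit the defining property of congruence slimness, namely that $\con(A)=\Ma_A\cup\ma_A\cup\{0_A,1_A\}$, so that the whole lattice is determined once we know the maximal congruences, the minimal congruences, and the relative position of $\mu_A$ and $\nu_A$. First I would dispose of the degenerate situation in which $A$ has no proper nontrivial congruence: then $\con(A)=\{0_A,1_A\}$ is a two-element chain and we land in case (i). Otherwise $\ma_A\neq\emptyset$ and $\Ma_A\neq\emptyset$, since in a finite lattice every proper nontrivial congruence lies below some maximal congruence and above some minimal congruence.

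The heart of the argument, and the step I expect to be the main obstacle, is the dichotomy that one cannot have simultaneously $|\Ma_A|\geq 2$ and $|\ma_A|\geq 2$. To prove it I would first locate $\mu_A$ and $\nu_A$ inside the slim lattice. If $|\Ma_A|\geq 2$, then $\mu_A=\bigwedge_{\alpha\in\Ma_A}\alpha$ is strictly below every maximal congruence, so $\mu_A$ is neither maximal nor equal to $1_A$; by slimness $\mu_A\in\{0_A\}\cup\ma_A$. Dually, if $|\ma_A|\geq 2$, then $\nu_A=\bigvee_{\alpha\in\ma_A}\alpha$ is strictly above every minimal congruence, so $\nu_A\in\{1_A\}\cup\Ma_A$. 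Assuming both, I combine these placements with $\nu_A\leq\mu_A$ from Lemma \ref{cong of directly irred}(i): since $\nu_A$ lies above a minimal congruence we have $\nu_A>0_A$, and since $\mu_A$ lies below a maximal congruence we have $\mu_A<1_A$; hence $\nu_A\in\Ma_A$ and $\mu_A\in\ma_A$. As $\mu_A$ is minimal and $0_A<\nu_A\leq\mu_A$, this forces $\nu_A=\mu_A\in\Ma_A\cap\ma_A$, and Lemma \ref{cong of directly irred}(ii) makes $\con(A)$ the three-element chain, contradicting $|\Ma_A|\geq 2$. This establishes the dichotomy.

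With the dichotomy in hand the classification follows by a short case analysis. If $|\Ma_A|\leq 1$ and $|\ma_A|\leq 1$, then $\con(A)$ has at most four elements and, using $\nu_A\leq\mu_A$ together with Lemma \ref{cong of directly irred}(ii), arranges into a chain of length at most $4$, which is case (i). If $|\Ma_A|\geq 2$, the dichotomy gives $|\ma_A|\leq 1$, and since $\Ma_A\neq\emptyset$ implies $\ma_A\neq\emptyset$ there is a unique minimal congruence $\gamma$. From $\mu_A\in\{0_A\}\cup\ma_A$ and $0_A<\nu_A\leq\mu_A$ I would deduce $\mu_A=\nu_A=\gamma$. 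Any proper congruence strictly between $0_A$ and some $\alpha_i\in\Ma_A$ must be minimal (it cannot be maximal, being strictly below a maximal congruence), hence equals $\gamma$; and nothing lies strictly between $\alpha_i$ and $1_A$ by maximality. Thus the lattice is precisely the diagram of case (ii) with $\mu_A=\gamma$. The remaining case $|\ma_A|\geq 2$ is the order dual and yields case (iii) by the same reasoning applied to the opposite lattice, so I would simply invoke duality. The only points requiring care are the empty-meet and empty-join conventions in the degenerate cases and verifying that no hidden congruence appears between consecutive layers, both of which are controlled directly by slimness.
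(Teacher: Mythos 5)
Your proof is correct, and it reaches the trichotomy by a genuinely different pivot than the paper does. The paper's proof of case (ii) works directly with a pair of distinct maximal congruences $\alpha,\beta$: it uses the Malt'sev property to get $\alpha\circ\beta=\alpha\vee\beta=1_A$, invokes direct indecomposability to rule out $\alpha\wedge\beta=0_A$, concludes by slimness that $\alpha\wedge\beta\in\ma_A$, and then squeezes $\mu_A\leq\alpha\wedge\beta\leq\nu_A\leq\mu_A$ to identify the unique minimal congruence; case (iii) is the dual. You never form the meet of two maximal congruences. Instead you first prove the dichotomy that $|\Ma_A|\geq 2$ and $|\ma_A|\geq 2$ cannot hold simultaneously, by locating $\mu_A$ in $\{0_A\}\cup\ma_A$ and $\nu_A$ in $\{1_A\}\cup\Ma_A$ via slimness and then colliding them through Lemma \ref{cong of directly irred}(i) and (ii); the diagrams then fall out of the placement $0_A<\nu_A=\mu_A$ together with slimness. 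The two routes rest on the same underlying fact (direct indecomposability forces every minimal congruence below every maximal one), but yours consumes the Malt'sev hypothesis only through Lemma \ref{cong of directly irred}, which makes the proposition's proof itself purely lattice-theoretic, while the paper's version re-applies the $\alpha\circ\beta=\alpha\vee\beta$ argument and in exchange exhibits the unique minimal congruence concretely as $\alpha\wedge\beta$. Your closing caveats (empty meet/join conventions, no hidden congruences between layers) are exactly the right details to check, and slimness does control both.
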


\begin{proof}
Recall that for Malt'sev algebras it holds that $\alpha\circ\beta=\alpha\wedge\beta$ for every pair of congruences $\alpha,\beta$. Let us denote $L=\con(A)$. According to Lemma \ref{llength 4}, we have $L=\Ma_A\cup \ma_A\cup \{0_A,1_A\}$. 
\begin{itemize}
    \item  If $|\Ma_A|=|\ma_A|=1$ then $L$ is a chain of length less than or equal to $4$. 

\item Assume that $|\Ma_A|>1$ and let $\alpha,\beta\in\Ma_A$ with $\alpha\neq \beta$. Note that $\alpha\circ \beta=1_A$. If $\alpha\wedge\beta=0_A$ then $A$ is directly decomposable, thus $0_A\neq \alpha\wedge\beta$ and, moreover, $\alpha\wedge\beta\in \ma_A$. Therefore, $\mu_A\leq \alpha\wedge\beta \leq \nu_A\leq \mu_A$, where the last inequality follows from Lemma \ref{cong of directly irred}(i). Hence $\mu_A=\nu_A\in \ma_A$. Thus, $A$ has a unique minimal congruence, namely $\mu_A$ and it follows that $L$ is as in (ii).

\item Assume that $|\ma_A|>1$ and let $\alpha,\beta\in \ma_A$ with $\alpha\neq \beta$. A dual argument shows $\alpha\vee\beta=\nu_A=\mu_A\in \Ma_A$ for every pair of distinct minimal congruences $\alpha,\beta$. So, $A$ has a unique maximal congruence, namely $\nu_A$ and so $L$ is as in (iii).\qedhere
\end{itemize}

%
%
\end{proof}

For solvable algebras, we can obtain further information about the congruence lattice.
\begin{lemma}\label{gamma and max} Let $A$ be a solvable algebra. Then $\gamma_A\leq \mu_A$.
\end{lemma}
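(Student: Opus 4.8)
The plan is to establish the inequality one maximal congruence at a time. Since $\mu_A=\bigwedge_{\alpha\in\Ma_A}\alpha$, it suffices to prove that $\gamma_A\le\alpha$ for every $\alpha\in\Ma_A$; taking the meet over all maximal congruences then yields $\gamma_A\le\mu_A$ at once. So I fix an arbitrary $\alpha\in\Ma_A$ and analyse the quotient $A/\alpha$.

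First I would record that $A/\alpha$ is simple. By the correspondence between $\con(A/\alpha)$ and the interval of congruences of $A$ above $\alpha$, maximality of $\alpha$ forces $\con(A/\alpha)=\{0_{A/\alpha},1_{A/\alpha}\}$, so $A/\alpha$ has no proper non-trivial congruences.

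The crucial step is to observe that $A/\alpha$ is again solvable. This is where the commutator machinery of \cite{comm} enters: homomorphic images of solvable algebras are solvable, since the commutator is compatible with quotients (for $\alpha\le\beta,\delta$ one has $[\beta/\alpha,\delta/\alpha]_{A/\alpha}=([\beta,\delta]\vee\alpha)/\alpha$), so that the term $\gamma^n(A/\alpha)$ is the image of $\gamma^n(A)$ in $A/\alpha$ and collapses to $0_{A/\alpha}$ as soon as $\gamma^n(A)=0_A$. I expect this to be the only point requiring care, as it rests on the good behaviour of the commutator under homomorphisms rather than on any direct computation; everything else is a formal consequence.

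Combining the two observations, $A/\alpha$ is a simple solvable algebra, hence abelian by the remark already recorded in the preliminaries (a simple solvable algebra $S$ satisfies $\gamma_S=0_S$). Finally, using the characterization that the factor $A/\alpha$ is abelian precisely when $\gamma_A\le\alpha$, I conclude $\gamma_A\le\alpha$. As $\alpha\in\Ma_A$ was arbitrary, $\gamma_A\le\bigwedge_{\alpha\in\Ma_A}\alpha=\mu_A$, as desired.
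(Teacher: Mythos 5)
Your argument is correct and is essentially the paper's own proof: the paper likewise reduces to showing that each simple factor $A/\alpha$ for $\alpha\in\Ma_A$ is abelian (using that solvability passes to quotients and that simple solvable algebras are abelian), and then concludes $\gamma_A\leq\alpha$ for all maximal $\alpha$, hence $\gamma_A\leq\mu_A$. You have merely spelled out the intermediate steps (simplicity of $A/\alpha$, the behaviour of the commutator under quotients) that the paper leaves implicit.
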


\begin{proof}
Since $A$ is solvable, the simple factors of $A$ are abelian, therefore $A/\alpha$ is abelian for all $\alpha\in\Ma_A$. So,  $\gamma_A\leq\alpha$ for every $\alpha\in \Ma_A$ and thus $\gamma_A\leq \mu_A$. 
\end{proof}

\begin{corollary}\label{mu and gamma}
Let $A$ be a finite directly indecomposable solvable non-abelian Malt'sev algebra. If $A$ is congruence slim and $|\Ma_A|> 1$ then 
$\gamma_A	=\mu_A$ 
\end{corollary}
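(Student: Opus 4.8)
The plan is to combine the upper bound $\gamma_A\le \mu_A$ coming from Lemma \ref{gamma and max} with a precise description of the bottom of the congruence lattice, and then eliminate the trivial possibility by invoking non-abelianness.

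First I would pin down the shape of $\con(A)$ by applying Proposition \ref{lattices}. Since $A$ is directly indecomposable, congruence slim, and Malt'sev, its congruence lattice must be one of the three listed types. The hypothesis $|\Ma_A|>1$ immediately rules out type (i), which is a chain and hence has a single maximal congruence, and type (iii), whose unique maximal congruence is $\nu_A$. Thus $\con(A)$ is as in case (ii). The decisive feature of that configuration is that $\mu_A=\nu_A$ is the \emph{unique} minimal congruence; equivalently, $\mu_A$ is an atom covering $0_A$, so the only congruences lying below or equal to $\mu_A$ are $0_A$ and $\mu_A$ itself.

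Now the conclusion follows by a short dichotomy. By Lemma \ref{gamma and max} we have $\gamma_A\le \mu_A$, and by the previous paragraph this forces $\gamma_A\in\{0_A,\mu_A\}$. Since $A$ is non-abelian, and recalling that $A$ is abelian precisely when $\gamma_A=\gamma_1(A)=0_A$, we get $\gamma_A\neq 0_A$. Therefore $\gamma_A=\mu_A$, as claimed.

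I do not expect any serious obstacle in this argument: once the lattice is identified it is a one-line observation about an atom. The only point that genuinely requires care is the correct reading of Proposition \ref{lattices}, namely verifying that the hypothesis $|\Ma_A|>1$ truly forces case (ii) — and in particular that this case has $\mu_A$ as its unique minimal congruence rather than merely bounding $\gamma_A$ from above. Everything else is the combination of the stated upper bound and the exclusion of $0_A$ via the non-abelian hypothesis.
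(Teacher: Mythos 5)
Your argument is correct and follows essentially the same route as the paper: the paper's proof likewise combines Lemma \ref{gamma and max} with Proposition \ref{lattices}(ii) to get $0_A\neq\gamma_A\leq\mu_A\in\ma_A$ and concludes by minimality of $\mu_A$. You merely spell out the case elimination in Proposition \ref{lattices} that the paper leaves implicit.
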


\begin{proof}
By Lemma \ref{gamma and max} and Proposition \ref{lattices}(ii), we have $0_A\neq \gamma_A\leq \mu_A\in \ma_A$. Therefore, equality holds.
\end{proof}

\section{Quandles} \label{sec:quandles}

\subsection{Left quasigroups and quandles}
A {\it left quasigroup} is a set $Q$ endowed with a pair of binary operations $F=\{*,\backslash\}$, such that 
\begin{align}\label{LQG}
x*(x\backslash y)=y=x\backslash(x*y)
\end{align}
hold for every $x,y\in Q$. We usually denote $*$ just by juxtaposition. Given a left quasigroup $Q$ we can define the left and right multiplication maps as
$$L_x:y\mapsto x*y,\quad R_x:y\mapsto y*x$$
for every $x\in Q$. According to axioms \eqref{LQG}, we see that the map $L_x$ is a bijection of $Q$ for every $x\in Q$. So we have that $x\backslash y=L_x^{-1}(y)$ for every $x,y\in Q$ and the {\it left multiplication group} of $Q$ as $\lmlt(Q)=\langle L_x\colon\, x\in Q\rangle$.

The {\it Cayley kernel} of $Q$ is the equivalence relation $\lambda_Q$ defined by setting $x\, \lambda_Q\, y$ if and only if $L_x=L_y$. We say that $Q$ is:
\begin{enumerate}[label=(\roman*)]
    \item {\it faithful} if $\lambda_Q=0_Q$;
    \item {\it connected} if the natural action of $\lmlt(Q)$ is transitive on $Q$;
    \item {\it latin} if $R_x$ is bijective for every $x\in Q$.
\end{enumerate}
Note that latin left quasigroups are faithful and connected.

A {\it quandle} $Q$ is a left quasigroup such that the following axioms hold:
\begin{enumerate}[label=(\roman*)]
    \item $x*x=x$ for all $x\in Q$;
    \item $x*(y*z)=(x*y)*(x*z)$ for all $x,y,z\in Q$.
\end{enumerate}

Note that (ii) is equivalent to having $L_x\in \aut{Q}$ for every $x\in Q$ and accordingly $\lmlt(Q)\leq \aut{Q}$.
Quandles can be constructed using groups and automorphisms as follows: let $G$ be a group, $f\in \aut{G}$ and $H\leq Fix(f)=\setof{x\in G}{f(x)=x}$. The set $G/H$ endowed with the operation 
$$xH*yH=xf(x^{-1}y)H$$ 
for every $x,y\in G$ is a quandle. We denote such quandle by $\mathcal{Q}(G,H,f)$ and refer to it as {\it coset quandles}. If $H=1$ we say that the coset quandle is {\it principal} and if $G$ is abelian, we say that it is {\it affine}. In the last case, we use the notation $\aff(G,f)$.

\subsection{Congruences and subgroups}

Let $Q$ be a quandle, $\alpha\in\con(Q)$ and $N$ be a normal subgroup of $\lmlt(Q)$. As introduced in \cite{CP}, we define the following equivalence relations and subgroups of the left multiplication group of $Q$: 
\begin{align*}
    \c{N} &= \{(x,y)\in Q\times Q\,\colon\, L_xL_y^{-1}\in N\}\in \con(Q); \\
    \mathcal{O}_N &= \{(x,y)\in Q\times Q\,\colon\, n(x)=y\,\text{ for some }n\in N\}\in \con(Q);\\
    \lmlt^\alpha &= \{h\in\lmlt(Q)\,\colon\, h(x)\,\alpha\, x\,\text{ for all } x\in Q\}\trianglelefteq \lmlt(Q); \\
    \dis_\alpha &= \langle L_xL_y^{-1}\colon\, x\,\alpha\, y\rangle \trianglelefteq \lmlt(Q).
\end{align*}
In particular, we define the {\it displacement group} of $Q$ as $\dis(Q)=\dis_{1_Q}$ which has the property that $\lmlt(Q)=\dis(Q)\langle L_x\rangle$ for every $x\in Q$ and consequently the orbits of $\lmlt(Q)$ and of $\dis(Q)$ coincide \cite[Proposition 2.1]{hsv}.

Note that $\lmlt^\alpha$ can be defined in the very same way for left quasigroups and $\dis_\alpha$ can also be defined as the normal closure of the subgroup generated by $\setof{L_x L_y^{-1}}{x\, \alpha \, y}$ in the left multiplication group. In particular, note that $\alpha\leq \lambda_Q$ if and only if $\dis_\alpha=1$. If so, we say that $Q$ is a {\it cover} of $Q/\alpha$ (see \cite{Eisermann, covering_paper, Involutory} for further details on quandle covers). We can also define $\dis^\alpha=\dis(Q)\cap \lmlt^\alpha$, and it is easy to verify that 
\begin{align*}
    \dis_\alpha\dis_\beta&=\dis_{\alpha\vee \beta} \\
    \dis^{\alpha\wedge\beta}&=\dis^\alpha\cap \dis^\beta\\
    \c{N\cap M}&=\c{N}\wedge\c{M}\\
    \mathcal{O}_{NM}&=\mathcal{O}_N\circ \mathcal{O}_M=\mathcal{O}_N\vee\mathcal{O}_M
\end{align*}
where $\alpha,\beta\in \con(Q)$ and $N,M$ are normal subgroups of $\lmlt(Q)$ (see also \cite[Proposition 3.2]{CP} for more details).

Moreover we have that $\mathcal{O}_N\leq \c{N}$ and according to \cite[Proposition 3.5(1)]{CP} we also  have that $$\dis_{\mathcal{O}_N}\leq \dis_{\c{N}}\leq N\leq \dis^{\mathcal{O}_N}\leq \dis^{\c{N}}$$  for every normal subgroup $N$ of $\lmlt(Q)$. In general we have
$$\mathcal{O}_{\dis_\alpha}\leq \mathcal{O}_{\dis^\alpha}\leq \alpha\leq \c{\dis_\alpha}\leq \c{\dis^\alpha}$$
for every congruence $\alpha$, and for finite latin quandles all such congruences are equal \cite[Lemma 1.8]{Galois}.

The pair of operators $(\dis_*,\c{*})$ (resp. $(\dis^*,\mathcal{O}_*)$) provides a monotone Galois connection between the congruence lattice of a quandle and the lattice of normal subgroups of its left multiplication group. We refer to \cite{CP} for more details on the properties of such Galois connections that we are going to exploit in the paper.

Given a left quasigroup $Q$ and $\alpha\in \con(Q)$, the map
$$\pi_\alpha:\lmlt(Q)\longrightarrow \lmlt(Q/\alpha),\quad L_x\mapsto L_{[x]}$$
can be extended to a surjective group homomorphism with kernel $\lmlt^\alpha$. Clearly $\pi_\alpha$ restricts and co-restricts to the displacement groups, and the kernel of this restriction is $\dis^\alpha$.

The displacement group plays a very important role for quandles. In fact, it can be used to build connected quandles and its factors with the coset quandle construction over the displacement group and its quotients (see also \cite{hsv}, Theorem 4.1, and \cite{J}). 

\begin{proposition}[\protect{\cite[Lemma 1.4]{GB}}]
    Let $Q$ be a connected quandle, $\alpha\in \con(Q)$ and $x\in Q$. Then $Q/\alpha\cong \mathcal{Q}(\dis(Q)/\dis^\alpha,\dis(Q)_{[x]}/\dis^\alpha ,\left(\widehat{L_x}\right)_{\dis^\alpha})$, where $\dis(Q)_{[x]}=\setof{h\in \dis(Q)}{h(x)\, \alpha\, x}$.
\end{proposition}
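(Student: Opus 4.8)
The plan is to write down an explicit isomorphism from the coset quandle onto $Q/\alpha$ and check that it is a bijective homomorphism. Put $G=\dis(Q)$, $D=\dis^\alpha$ and $H=\dis(Q)_{[x]}$. Since $D=\dis(Q)\cap\lmlt^\alpha$ is the intersection of two normal subgroups of $\lmlt(Q)$, it is itself normal in $\lmlt(Q)$; in particular conjugation by $L_x$ preserves $D$, so by the lemma asserting that an automorphism fixing a normal subgroup descends to the quotient, $(\widehat{L_x})_{D}$ is a well-defined automorphism of $G/D$. As $D\leq H\leq G$, the correspondence theorem identifies the underlying set $(G/D)\big/(H/D)$ of the coset quandle with $G/H=\dis(Q)/\dis(Q)_{[x]}$, and I define
$$\Psi:\dis(Q)/\dis(Q)_{[x]}\longrightarrow Q/\alpha,\qquad h\,\dis(Q)_{[x]}\longmapsto[h(x)].$$

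First I would show $\Psi$ is a well-defined bijection. Every $h\in\lmlt(Q)$ preserves $\alpha$ (from $a\,\alpha\,b$ we get $L_x(a)\,\alpha\,L_x(b)$, and likewise for $L_x^{-1}$), hence for $h,h'\in\dis(Q)$ the equality $h\,\dis(Q)_{[x]}=h'\,\dis(Q)_{[x]}$ is equivalent to $(h^{-1}h')(x)\,\alpha\,x$, which is equivalent to $[h(x)]=[h'(x)]$; this yields well-definedness and injectivity simultaneously. Surjectivity is immediate because $\dis(Q)$ acts transitively on $Q$, its orbits coinciding with those of the transitive group $\lmlt(Q)$.

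Next, the coset quandle is only defined once $H/D\leq\fix\big((\widehat{L_x})_{D}\big)$, i.e. $h^{-1}L_xhL_x^{-1}\in D$ for all $h\in H$; I regard this as the crux of the argument. The element lies in $\dis(Q)$ by normality, so it remains to place it in $\lmlt^\alpha$, the kernel of $\pi_\alpha$. Applying $\pi_\alpha$ gives $\pi_\alpha(h)^{-1}L_{[x]}\pi_\alpha(h)L_{[x]}^{-1}$; here $\pi_\alpha(h)\in\dis(Q/\alpha)$ fixes $[x]$, since it acts on $Q/\alpha$ by $[y]\mapsto[h(y)]$ and $h(x)\,\alpha\,x$, and the conjugation identity $\phi L_a\phi^{-1}=L_{\phi(a)}$, valid for every quandle automorphism $\phi$ (applied in $Q/\alpha$), forces $\pi_\alpha(h)$ to commute with $L_{[x]}$. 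Thus the expression is trivial, so $h^{-1}L_xhL_x^{-1}\in\lmlt^\alpha\cap\dis(Q)=\dis^\alpha=D$, as needed. The real difficulty here is purely organizational: one must keep track of the groups $\lmlt^\alpha$, $\dis^\alpha$, $\dis(Q)_{[x]}$ and their images in $\lmlt(Q/\alpha)$, and recognize that the $\fix$-condition is exactly the statement that stabilizers of $[x]$ in $\dis(Q/\alpha)$ centralize $L_{[x]}$.

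Finally, the homomorphism property follows from the same conjugation identity, now read inside $Q$: $L_{h_1(x)}=h_1L_xh_1^{-1}$. Unwinding the coset quandle product, $(h_1\dis(Q)_{[x]})*(h_2\dis(Q)_{[x]})$ is represented by $w=h_1L_xh_1^{-1}h_2L_x^{-1}$, and using idempotency in the form $L_x^{-1}(x)=x$ one computes $\Psi(w\,\dis(Q)_{[x]})=[w(x)]=[h_1L_xh_1^{-1}(h_2(x))]=[L_{h_1(x)}(h_2(x))]=[h_1(x)]*[h_2(x)]$, which matches $\Psi(h_1\dis(Q)_{[x]})*\Psi(h_2\dis(Q)_{[x]})$. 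Hence $\Psi$ is the desired isomorphism. As an alternative route, one could instead apply the known homogeneous representation of the connected quandle $Q/\alpha$ and then use the isomorphism induced by $\pi_\alpha$ on displacement groups to identify $\dis(Q/\alpha)$, the stabilizer of $[x]$, and $\widehat{L_{[x]}}$ with $\dis(Q)/\dis^\alpha$, $\dis(Q)_{[x]}/\dis^\alpha$, and $(\widehat{L_x})_{\dis^\alpha}$, respectively.
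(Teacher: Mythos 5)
Your argument is correct: the well-definedness and bijectivity of $\Psi$, the verification that $\dis(Q)_{[x]}/\dis^\alpha$ lies in the fixed-point set of $\left(\widehat{L_x}\right)_{\dis^\alpha}$ via $\pi_\alpha$ and the identity $\phi L_a\phi^{-1}=L_{\phi(a)}$, and the homomorphism check all go through. The paper itself offers no proof of this proposition (it is quoted directly from \cite[Lemma 1.4]{GB}), and your direct verification is exactly the standard argument underlying the cited result, so there is nothing to flag.
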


The coset quandle construction over the displacement group allows also to identify the isomorphism classes of quandles. Indeed, we have the following isomorphism theorem. 
\begin{theorem}[\protect{\cite[Theorem 5.12]{GB}\cite[Corollary 3.26]{GiuThe}}] \label{iso theo}
    Let $G$ be a group, $f_i\in \aut{G}$ and let $Q_i=\mathcal{Q}(G,Fix(f_i),f_i)$ be such that $\dis(Q_i)=G$ for $i=1,2$. Then $Q_1\cong Q_2$ if and only if $f_1$ and $f_2$ are conjugate in $\aut{G}$.
\end{theorem}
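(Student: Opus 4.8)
The plan is to prove the two implications separately, treating the implication ``$f_1,f_2$ conjugate $\Rightarrow Q_1\cong Q_2$'' as a direct construction and reserving the real work for the converse.

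For the forward direction, suppose $f_2=\phi f_1\phi^{-1}$ with $\phi\in\aut{G}$, and write $H_i=\fix(f_i)$. First I would check that $\phi$ carries $H_1$ onto $H_2$: if $f_1(x)=x$ then $f_2(\phi(x))=\phi f_1\phi^{-1}(\phi(x))=\phi(x)$, and symmetrically, so $\phi(H_1)=H_2$. Hence $\phi$ descends to a bijection $\bar\phi\colon G/H_1\to G/H_2$, $xH_1\mapsto \phi(x)H_2$, which is a bijection since $\phi$ is. A one-line computation using $\phi f_1=f_2\phi$ then shows $\bar\phi(xH_1*yH_1)=\bar\phi(xf_1(x^{-1}y)H_1)=\phi(x)f_2(\phi(x)^{-1}\phi(y))H_2=\bar\phi(xH_1)*\bar\phi(yH_1)$, so $\bar\phi$ is the desired isomorphism $Q_1\to Q_2$.

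For the converse, the key mechanism is that a quandle isomorphism induces a conjugation isomorphism of the associated permutation groups. Given an isomorphism $\psi\colon Q_1\to Q_2$, the map $\psi_*\colon h\mapsto \psi h\psi^{-1}$ is an isomorphism $\lmlt(Q_1)\to\lmlt(Q_2)$ satisfying $\psi_*(L_x)=L_{\psi(x)}$; since $\psi_*(L_xL_y^{-1})=L_{\psi(x)}L_{\psi(y)}^{-1}$, it restricts to an isomorphism $\dis(Q_1)\to\dis(Q_2)$. As $\dis(Q_1)=\dis(Q_2)=G$ by hypothesis, this restriction is an automorphism $\phi\in\aut{G}$. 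It remains to relate $\phi$ to $f_1,f_2$, and here I would invoke the homogeneous representation of connected quandles (the proposition cited just before the statement): under the identification $G=\dis(Q_i)$, the automorphism $f_i$ coincides with the conjugation $\widehat{L_{H_i}}$ restricted to $\dis(Q_i)$, where $H_i$ is the base point (the identity coset) of $Q_i$. This identification is precisely where the hypothesis $\dis(Q_i)=G$ is used, since it guarantees that the abstract group $G$ is canonically the displacement group acting by left translations (equivalently that $\fix(f_i)$ is core-free); pinning this down rigorously is the main obstacle of the argument.

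Granting this, I would compute, on $\dis(Q_2)=G$, that $\phi f_1\phi^{-1}=\psi_*\,\widehat{L_{H_1}}\,\psi_*^{-1}=\widehat{\psi_*(L_{H_1})}=\widehat{L_{\psi(H_1)}}$, using $\psi_*(L_{H_1})=L_{\psi(H_1)}$. If $\psi(H_1)=H_2$ this equals $f_2$ exactly and we are done. In general, by connectedness $\psi(H_1)=d(H_2)$ for some $d\in\dis(Q_2)$, and the equivariance $L_{d(H_2)}=dL_{H_2}d^{-1}$ (valid because every left multiplication is a quandle automorphism) gives $\widehat{L_{\psi(H_1)}}=\widehat{d}\,f_2\,\widehat{d}^{-1}$, where $\widehat{d}$ is the inner automorphism of $G$ determined by $d\in\dis(Q_2)=G$. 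Thus $(\widehat{d}^{-1}\phi)\,f_1\,(\widehat{d}^{-1}\phi)^{-1}=f_2$, so $f_1$ and $f_2$ are conjugate in $\aut{G}$; alternatively, one may first replace $\psi$ by $L_{d^{-1}}\circ\psi\in\aut{Q_2}\circ\psi$ to arrange $\psi(H_1)=H_2$ from the outset, which removes the inner-automorphism bookkeeping and completes the proof.
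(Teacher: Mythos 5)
The paper does not actually prove Theorem \ref{iso theo}: it is imported verbatim from \cite{GB} and \cite{GiuThe}, so there is no internal proof to compare against. Judged on its own, your argument is correct and is essentially the standard proof found in those references. The forward direction is a clean direct verification (including the necessary check $\phi(\fix(f_1))=\fix(f_2)$). For the converse, the two load-bearing steps are both right: (a) a quandle isomorphism $\psi$ conjugates left multiplications, $\psi L_x\psi^{-1}=L_{\psi(x)}$, hence carries $\dis(Q_1)$ onto $\dis(Q_2)$ and yields $\phi\in\aut{G}$; (b) under the identification of $G$ with $\dis(Q_i)$ by left translations $g\mapsto\lambda_g$, one computes $L_{H_i}\lambda_g L_{H_i}^{-1}=\lambda_{f_i(g)}$, so $f_i=\widehat{L_{H_i}}\big|_{\dis(Q_i)}$ --- this is the short computation you correctly flag as the place where the hypothesis $\dis(Q_i)=G$ (faithfulness and surjectivity of the translation action) is consumed, and it does go through. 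The base-point correction via connectedness and $L_{d(H_2)}=dL_{H_2}d^{-1}$, producing the extra inner automorphism $\widehat{d}$, is also exactly right. The only blemish is notational: in the final parenthetical you write $L_{d^{-1}}\circ\psi$, but $d$ is an element of $\dis(Q_2)\leq\aut{Q_2}$, not of $Q_2$, so the replacement should simply be $d^{-1}\circ\psi$; the surrounding argument with $\widehat{d}$ already does the job without this shortcut.
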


Sylow subgroups of nilpotent normal subgroups of the left multiplication group of a quandle provide trivially intersecting congruences through the operator $\mathcal{O}_*$.
\begin{lemma}\label{Sylow}
    Let $Q$ be a finite connected faithful quandle, $N$ be a normal nilpotent subgroup of $\lmlt(Q)$ such that $N=\prod_i P_i$ where $P_i$ are the $p$-Sylow subgroups of $N$. Then $ \mathcal{O}_{P_i}\wedge \mathcal{O}_{P_j}=0_Q$ for all $i\neq j$ and $\circ_{i} \mathcal{O}_{P_i}=\mathcal{O}_N$.
\end{lemma}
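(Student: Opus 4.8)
The plan is to treat the two assertions separately, dealing with the join formula first and then with the triviality of the meet. As a preliminary observation I would record that, since $N$ is nilpotent, each $P_i$ is the \emph{unique} Sylow $p_i$-subgroup of $N$ and hence characteristic in $N$; as $N\trianglelefteq \lmlt(Q)$ this forces every $P_i$ to be normal in $\lmlt(Q)$, so that each $\mathcal{O}_{P_i}$ is a genuine congruence whose classes are precisely the orbits $x^{P_i}$.

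The identity $\circ_i\mathcal{O}_{P_i}=\mathcal{O}_N$ then follows by a straightforward induction from the product rule $\mathcal{O}_{NM}=\mathcal{O}_N\circ\mathcal{O}_M$. Writing $N_j=P_1\cdots P_j$ (again normal in $\lmlt(Q)$, being a product of Sylow subgroups of the nilpotent group $N$, hence characteristic in $N$), the rule gives $\mathcal{O}_{N_j}=\mathcal{O}_{N_{j-1}}\circ\mathcal{O}_{P_j}$ at each step, and since $N=N_k$ this telescopes to $\mathcal{O}_N=\mathcal{O}_{P_1}\circ\cdots\circ\mathcal{O}_{P_k}$. The order is immaterial because $\mathcal{O}_{P_i}\circ\mathcal{O}_{P_j}=\mathcal{O}_{P_i}\vee\mathcal{O}_{P_j}$ is symmetric.

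The substance is the meet $\mathcal{O}_{P_i}\wedge\mathcal{O}_{P_j}=0_Q$ for $i\neq j$. Since $\wedge$ is intersection of relations, I would unwind this to a purely orbit-theoretic statement: $(x,y)\in\mathcal{O}_{P_i}\wedge\mathcal{O}_{P_j}$ means exactly $y\in x^{P_i}\cap x^{P_j}$, so it suffices to prove $x^{P_i}\cap x^{P_j}=\{x\}$ for every $x$. Suppose $y=n(x)=m(x)$ with $n\in P_i$ and $m\in P_j$; then $g:=m^{-1}n$ fixes $x$, i.e. $g\in (P_iP_j)_x$. The crux now lies in the group theory of $P_iP_j$: as $P_i$ and $P_j$ are distinct direct factors of $N$, the product $P_iP_j$ is the internal direct product $P_i\times P_j$ of two groups of \emph{coprime} orders. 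The key lemma I would invoke is that any subgroup $S$ of a direct product of two groups of coprime order splits as the product of its intersections with the two factors — here this holds because $S$ is nilpotent while $P_i$ (resp. $P_j$) is the unique Sylow $p_i$- (resp. $p_j$-) subgroup of $P_i\times P_j$, so the Sylow subgroups of $S$ are forced into the respective factors. Applying this to $S=(P_iP_j)_x$ yields $(P_iP_j)_x=(P_i)_x\times(P_j)_x$; decomposing $g=m^{-1}n$ along $P_i\times P_j$ shows its $P_i$-component is $n$, whence $n\in(P_i)_x$ and therefore $y=n(x)=x$. This gives $x^{P_i}\cap x^{P_j}=\{x\}$, i.e. $\mathcal{O}_{P_i}\wedge\mathcal{O}_{P_j}=0_Q$.

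I expect the coprime-splitting step to be the only genuine obstacle; everything else is bookkeeping with the operator $\mathcal{O}_{*}$ and the normality of the $P_i$. I would also remark that connectedness and faithfulness are not actually used in this orbit computation, even though they form the ambient setting in which the lemma is applied.
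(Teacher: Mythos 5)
Your proof is correct, but the crux of your argument --- the triviality of the meet --- is handled by a genuinely different mechanism than the one in the paper. The paper disposes of it in one line via the Galois connection: $\mathcal{O}_{P_i}\wedge \mathcal{O}_{P_j}\leq \c{P_i}\wedge\c{P_j}=\c{P_i\cap P_j}=\c{1}=\lambda_Q=0_Q$, using the inequality $\mathcal{O}_N\leq\c{N}$, the meet-preservation of $\c{*}$, and --- crucially --- faithfulness to conclude $\lambda_Q=0_Q$. You instead compute directly with orbits and stabilizers: from $y=n(x)=m(x)$ you place $m^{-1}n$ in $(P_iP_j)_x$, split that stabilizer along the internal direct product $P_i\times P_j$ of coprime-order factors, and read off $n\in (P_i)_x$. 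Both arguments are sound (your coprime-splitting lemma for subgroups of $P_i\times P_j$ is standard and your use of it is correct), and the normality of the $P_i$ and the identity $\mathcal{O}_N=\circ_i\mathcal{O}_{P_i}$ are handled the same way in both. What your route buys is exactly what you observe at the end: faithfulness (and connectedness) are not needed for the orbit computation, so your version of the meet statement is marginally more general; the price is a longer argument relying on an auxiliary group-theoretic splitting lemma where the paper needs only the already-established calculus of the operators $\mathcal{O}_*$ and $\c{*}$. In the paper's applications the quandles are always faithful, so nothing is lost either way, but it is worth being aware that the intended proof is the two-line Galois-connection one.
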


\begin{proof}

The Sylow subgroups of $N$ are characteristic in $N$, so they are normal in $\lmlt(Q)$. Thus, $\mathcal{O}_{P_i}$ is a congruence of $Q$ for every $i$. Moreover $\mathcal{O}_{P_i}\wedge \mathcal{O}_{P_j}\leq \c{P_i}\wedge \c{P_j}=\c{P_i\cap P_j}=\c{1}=\lambda_Q=0_Q$, since $P_i\cap P_j=1$. 
%
     Moreover $N=\prod_i P_i$ and so $\mathcal{O}_N=\circ_i \mathcal{O}_{P_i}$.
\end{proof}

\begin{corollary}\label{center of dis^alpha}
    Let $Q$ be a finite faithful quandle, and $\alpha\in \ma_Q$. The following are equivalent:
    \begin{itemize}
        \item[(i)] $Z(\dis^\alpha)\neq 1$.
        \item[(ii)] $\dis^\alpha$ is a $p$-group of nilpotency class less than or equal to $2$ for some prime $p$.
    \end{itemize}
\end{corollary}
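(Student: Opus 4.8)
The plan is to treat the two implications separately, the reverse being immediate and the forward one carrying all the content. Before starting I would record two facts. Since $Q$ is faithful, $\lambda_Q=0_Q$ and hence $\c 1=0_Q$; as the general chain gives $\alpha\le \c{\dis^\alpha}$, this forces $\dis^\alpha\ne 1$ (otherwise $\alpha\le \c 1=0_Q$, contradicting $\alpha\in\ma_Q$). Moreover $\dis^\alpha=\dis(Q)\cap\lmlt^\alpha$ is normal in $\lmlt(Q)$, so $A:=Z(\dis^\alpha)$, being characteristic in $\dis^\alpha$, is itself normal in $\lmlt(Q)$; in particular $\mathcal{O}_A$ is a congruence of $Q$. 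With these in hand, (ii) $\Rightarrow$ (i) is immediate: a non-trivial finite $p$-group has non-trivial centre, and we have just seen $\dis^\alpha\ne 1$.

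For (i) $\Rightarrow$ (ii) I would first pin down where $A$ sits via the operator $\mathcal{O}_{*}$. From $A\le\dis^\alpha$ and monotonicity, together with the chain, $\mathcal{O}_A\le\mathcal{O}_{\dis^\alpha}\le\alpha$; and $A\ne 1$ forces $\mathcal{O}_A\ne 0_Q$, since $\lmlt(Q)$ acts faithfully as a permutation group and a non-trivial normal subgroup cannot fix every point. Minimality of $\alpha$ then gives $\mathcal{O}_A=\alpha$. Writing $A=\prod_i P_i$ for its Sylow decomposition and applying Lemma \ref{Sylow}, the $\mathcal{O}_{P_i}$ pairwise meet in $0_Q$, each lies below $\mathcal{O}_A=\alpha$, and $\mathcal{O}_{P_i}=0_Q$ exactly when $P_i=1$. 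Were two of the $P_i$ non-trivial, both $\mathcal{O}_{P_i}$ would equal $\alpha$ by minimality, giving $\alpha=\alpha\wedge\alpha=0_Q$; hence exactly one $P_i$ survives and $A$ is a $p$-group for a single prime $p$.

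The second and decisive step promotes this from $A$ to all of $\dis^\alpha$ by a block-wise argument. The equality $\mathcal{O}_A=\alpha$ means precisely that the $A$-orbits are the $\alpha$-classes, so the \emph{central} subgroup $A$ acts transitively on each class $B$; an abelian transitive permutation group is regular, so the image $A|_B\le\Sym_B$ is regular and therefore self-centralising, $C_{\Sym_B}(A|_B)=A|_B$. Now $\dis^\alpha\le\lmlt^\alpha$ stabilises each class set-wise, and since $A=Z(\dis^\alpha)$ every element of $\dis^\alpha$ commutes with every element of $A$; restricting to $B$ gives $(\dis^\alpha)|_B\le C_{\Sym_B}(A|_B)=A|_B$, and the reverse inclusion is clear, so $(\dis^\alpha)|_B=A|_B$ is an abelian $p$-group. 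Finally, because $\lmlt(Q)$ acts faithfully and the classes partition $Q$, the restriction maps embed $\dis^\alpha$ into the direct product $\prod_B A|_B$ of abelian $p$-groups for the single prime $p$. Thus $\dis^\alpha$ is an abelian $p$-group, in particular of nilpotency class at most $2$, which is (ii).

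The delicate point is this last step: one has to read $\mathcal{O}_A=\alpha$ as transitivity of the \emph{centre} on the blocks, and then exploit the self-centralising property of a regular abelian permutation group to collapse the restriction of $\dis^\alpha$ on each block to $A|_B$. Everything else is bookkeeping with the Galois inequalities and Lemma \ref{Sylow}. I would also note that the argument in fact yields the sharper conclusion that $\dis^\alpha$ is \emph{abelian}; the class-$\le 2$ bound recorded in (ii) is simply the form that is needed subsequently.
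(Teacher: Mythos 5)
Your proposal is correct, and for the decisive step it takes a genuinely different route from the paper. The first half coincides with the paper's argument: both proofs observe that $Z=Z(\dis^\alpha)$ is characteristic in $\dis^\alpha$, hence normal in $\lmlt(Q)$, that $\mathcal{O}_Z$ is a non-trivial congruence below $\alpha$ which must equal $\alpha$ by minimality, and both invoke Lemma \ref{Sylow} to isolate a single prime (you apply it to $Z$ and the paper to $\dis^\alpha$, an immaterial difference). Where you diverge is in bounding the nilpotency class: the paper uses the Galois inequalities to get $\dis_\alpha\leq \dis_{\c{Z}}\leq Z$ (the displayed chain there contains a typo, $\dis^\alpha$ for $\dis_\alpha$) and then cites \cite[Proposition 3.3(1)]{CP} for $[\dis^\alpha,\dis^\alpha]\leq\dis_\alpha\leq Z(\dis^\alpha)$, which gives class at most $2$. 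You instead interpret $\mathcal{O}_Z=\alpha$ as transitivity of the centre on each $\alpha$-block and exploit the fact that a regular abelian permutation group is self-centralising to force $(\dis^\alpha)|_B=Z|_B$ on every block, whence $\dis^\alpha$ embeds in an abelian $p$-group. Your argument is purely permutation-theoretic, avoids the external commutator inequality from \cite{CP}, and proves the strictly stronger conclusion that $\dis^\alpha$ is \emph{abelian}; the paper's argument is shorter given the machinery already set up, but only delivers the class-$\leq 2$ bound actually stated. I also note, in your favour, that you explicitly verify $\dis^\alpha\neq 1$ from faithfulness in the (ii) $\Rightarrow$ (i) direction, a point the paper dismisses as clear even though the trivial group is formally a $p$-group of class at most $2$.
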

\begin{proof}
    (ii) $\Rightarrow$ (i) Clear.

    (i) $\Rightarrow$ (ii) Let $Z=Z(\dis^\alpha)\neq 1$. The subgroups $Z$ is characteristic in $\dis^\alpha$ and so is normal in $\lmlt(Q)$. Then $\beta=\mathcal{O}_Z\leq \alpha$ is a proper congruence, and so $\alpha=\beta$. Hence $\dis^\alpha\leq \dis_{\c{Z}}\leq Z$. According to \cite[Propositon 3.3(1)]{CP} we have $[\dis^\alpha,\dis^\alpha]\leq \dis_\alpha\leq Z$ and so $\dis^\alpha$ has nilpotency class $2$. According to Lemma \ref{Sylow} the Sylow subgroups of $\dis^\alpha$ provide trivially intersecting congruence below $\alpha$. Since $\alpha$ is minimal, we see that $\dis^\alpha$ is a $p$-group for some prime $p$.
\end{proof}

In the final part of this section, we examine how subdirect reducibility and direct decomposability of left quasigroups and quandles relate to the corresponding properties of their left multiplication group and displacement group.
\begin{lemma}\label{DD LQGs}
Let $Q_1$ and $Q_2$ be left quasigroups. Then $\dis(Q_1\times Q_2)\cong \dis(Q_1)\times \dis(Q_2)$.
\end{lemma}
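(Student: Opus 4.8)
The plan is to realize all three displacement groups inside a single symmetric group via the product action and then prove the desired equality by a double inclusion. First I would record the componentwise description of the left multiplications: since the operation on $Q_1\times Q_2$ is coordinatewise, $L_{(x_1,x_2)}(y_1,y_2)=(L_{x_1}(y_1),L_{x_2}(y_2))$ for all $x_i,y_i$. Writing $\Phi\colon \Sym_{Q_1}\times\Sym_{Q_2}\longrightarrow \Sym_{Q_1\times Q_2}$ for the natural embedding sending $(g_1,g_2)$ to the permutation $(y_1,y_2)\mapsto(g_1(y_1),g_2(y_2))$, this reads $L_{(x_1,x_2)}=\Phi(L_{x_1},L_{x_2})$. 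I would then check that $\Phi$ is an injective group homomorphism; injectivity uses that $Q_1$ and $Q_2$ are non-empty, so a pair acting trivially on the product must act trivially on each factor. It therefore suffices to prove the equality of subgroups $\dis(Q_1\times Q_2)=\Phi\bigl(\dis(Q_1)\times\dis(Q_2)\bigr)$ inside $\Sym_{Q_1\times Q_2}$, after which composing with the isomorphism $\Phi$ onto its image yields the claim.

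For the inclusion $\dis(Q_1\times Q_2)\leq\Phi\bigl(\dis(Q_1)\times\dis(Q_2)\bigr)$ I would simply observe that each generator of the displacement group of the product is
\[
L_{(x_1,x_2)}L_{(y_1,y_2)}^{-1}=\Phi\bigl(L_{x_1}L_{y_1}^{-1},\,L_{x_2}L_{y_2}^{-1}\bigr),
\]
whose argument lies in $\dis(Q_1)\times\dis(Q_2)$ by definition of the displacement groups.

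The reverse inclusion is the only step requiring an idea, and it is where I expect the (mild) main obstacle to lie. The strategy is to show that both factors $\dis(Q_1)\times\{1\}$ and $\{1\}\times\dis(Q_2)$ lie inside $\dis(Q_1\times Q_2)$ after applying $\Phi$, which suffices because together they generate the full direct product. Fixing an arbitrary $z_2\in Q_2$, for every $x_1,y_1\in Q_1$ one has $L_{(x_1,z_2)}L_{(y_1,z_2)}^{-1}=\Phi\bigl(L_{x_1}L_{y_1}^{-1},\,1\bigr)$, so each generator $L_{x_1}L_{y_1}^{-1}$ of $\dis(Q_1)$ is realized as an element of $\dis(Q_1\times Q_2)$; hence $\Phi\bigl(\dis(Q_1)\times\{1\}\bigr)\leq\dis(Q_1\times Q_2)$, and the symmetric argument with a fixed $z_1\in Q_1$ gives $\Phi\bigl(\{1\}\times\dis(Q_2)\bigr)\leq\dis(Q_1\times Q_2)$. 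Combining the two inclusions yields the claimed equality of images, and hence $\dis(Q_1\times Q_2)\cong\dis(Q_1)\times\dis(Q_2)$. Thus the whole argument reduces to this ``constant coordinate'' trick, everything else being a routine verification that the constructions are compatible with the componentwise action.
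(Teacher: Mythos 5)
Your proof is correct, but it takes a genuinely different route from the paper's. You argue entirely at the level of generators: you embed $\Sym_{Q_1}\times\Sym_{Q_2}$ into $\Sym_{Q_1\times Q_2}$ via the product action, observe that $L_{(x_1,x_2)}=\Phi(L_{x_1},L_{x_2})$, and prove the equality $\dis(Q_1\times Q_2)=\Phi(\dis(Q_1)\times\dis(Q_2))$ by double inclusion, with the ``constant coordinate'' trick handling the nontrivial direction; all steps check out, and the only hypotheses you use are non-emptiness of the underlying sets (for injectivity of $\Phi$) and the coordinatewise definition of the product operation. The paper instead works through its Galois-connection machinery: taking $\alpha_i=\ker{p_i}$ for the two projections, it computes $\dis^{\alpha_1}\cap\dis^{\alpha_2}=\dis^{\alpha_1\wedge\alpha_2}=1$ and $\dis(Q)=\dis_{\alpha_1\vee\alpha_2}=\dis_{\alpha_1}\dis_{\alpha_2}\leq\dis^{\alpha_1}\dis^{\alpha_2}$ to obtain an internal direct product decomposition $\dis(Q)\cong\dis^{\alpha_1}\times\dis^{\alpha_2}$, and then identifies each factor with $\dis(Q_i)$ via the quotient homomorphisms $\pi_{\alpha_i}$. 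Your argument is more elementary and self-contained, requiring none of the identities relating $\dis_\alpha$, $\dis^\alpha$, meets and joins; the paper's version is shorter given that this machinery is already in place, and it has the side benefit of explicitly locating the two direct factors inside $\dis(Q_1\times Q_2)$ as the subgroups $\dis^{\alpha_1}$ and $\dis^{\alpha_2}$, a description that is reused elsewhere in the paper. Either proof is acceptable here.
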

\begin{proof}
    Let $p_i:Q\longrightarrow Q_i$ be the canonical projection, and $\alpha_i=\ker{p_i}$ for $i=1,2$. Then $\dis^{\alpha_1}\cap \dis^{\alpha_2}=\dis^{\alpha_1\wedge\alpha_2}=\dis^{0_Q}=1$ and $\dis(Q)=\dis_{\alpha_1\vee \alpha_2}=\dis_{\alpha_1}\dis_{\alpha_2}\leq \dis^\alpha_1\dis^\alpha_2$. Hence $\dis(Q)\cong \dis^\alpha_1\times \dis^\alpha_2$. Since $\dis(Q)/\dis^\alpha_1\cong \dis(Q/\alpha_1)\cong \dis^{\alpha_2}$ and $\dis(Q)/\dis^\alpha_2\cong \dis(Q/\alpha_2)\cong \dis^{\alpha_1}$ the statement follows.
\end{proof}
\begin{corollary}
    Let $Q$ be a directly decomposable left quasigroup. Then $\dis(Q)$ is directly decomposable.
\end{corollary}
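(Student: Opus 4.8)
The plan is to deduce the statement directly from Lemma~\ref{DD LQGs}. By the definition of direct decomposability, $Q$ is isomorphic to a direct product $Q_1\times Q_2$ of two non-trivial left quasigroups. Applying Lemma~\ref{DD LQGs} to this decomposition immediately gives $\dis(Q)\cong \dis(Q_1)\times \dis(Q_2)$, exhibiting $\dis(Q)$ as a direct product of two groups. It then only remains to check that this is a \emph{genuine} decomposition, i.e. that neither factor degenerates to the trivial group.

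The main obstacle, in my view, is precisely this non-triviality of $\dis(Q_1)$ and $\dis(Q_2)$. In the notation of the proof of Lemma~\ref{DD LQGs}, these factors are identified with $\dis^{\alpha_2}$ and $\dis^{\alpha_1}$, where $\alpha_i=\ker{p_i}$ are the kernels of the two projections, so what is really needed is that each $\dis^{\alpha_i}$ is non-trivial. For an \emph{arbitrary} left quasigroup this can genuinely fail: a direct factor may have trivial displacement group — for instance a projection factor, where every $L_x$ is the identity and hence $\lmlt(Q_i)=\dis(Q_i)=1$ — and in that case $\dis(Q)\cong\dis(Q_j)$ for the remaining index $j$, which need not be directly decomposable (e.g. it could be cyclic of prime order). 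Thus some hypothesis forcing $\dis(Q_i)\neq 1$ is essential, and I would flag this explicitly rather than treat the conclusion as automatic.

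In the setting relevant to this paper the obstacle disappears, and this is how I would close the argument. The quandles under consideration are latin, and any direct factor of a latin quandle is again latin, hence connected. Since the orbits of $\dis(Q_i)$ coincide with those of $\lmlt(Q_i)$, connectedness of a non-trivial $Q_i$ forces $\dis(Q_i)$ to act transitively on more than one point, so $\dis(Q_i)\neq 1$. With both factors non-trivial, the isomorphism $\dis(Q)\cong \dis(Q_1)\times \dis(Q_2)$ presents $\dis(Q)$ as a direct product of two non-trivial groups, which is exactly the assertion that $\dis(Q)$ is directly decomposable.
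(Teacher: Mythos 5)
Your proof follows exactly the route the paper intends: the corollary carries no proof of its own and is meant to be read off immediately from Lemma~\ref{DD LQGs}, by writing $Q\cong Q_1\times Q_2$ and concluding $\dis(Q)\cong\dis(Q_1)\times\dis(Q_2)$. Your additional point about non-triviality of the factors is well taken and is not addressed anywhere in the paper: for an arbitrary left quasigroup the statement as written is in fact false, since a non-trivial direct factor can have trivial displacement group (e.g.\ a projection left quasigroup with $x*y=y$, so that every $L_x$ is the identity), in which case $\dis(Q)\cong\dis(Q_2)$ and one may arrange this to be, say, $\Z_3$ (take $Q_2=\aff(\Z_3,-1)$), which is directly indecomposable. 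The corollary is only ever applied to latin, hence connected, quandles, where, as you observe, $\dis(Q_i)$ acts transitively on $Q_i$ and is therefore non-trivial whenever $|Q_i|>1$; so your patch recovers the statement in every case in which the paper uses it, and your flag identifies a genuine (if harmless in context) overstatement in the corollary's hypotheses.
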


Note that if $\dis(Q)$ is directly decomposable, then $Q$ does not need to be directly decomposable, not even for quandles. In fact $Q={\tt SmallQuandle(32,5)}$ is directly indecomposable, but $\dis(Q)\cong \mathcal{Q}_8\times \Z_2^2$.

\begin{lemma}
Let $Q$ be a connected faithful quandle. 
If $\lmlt(Q)$ is directly decomposable, then $Q$ is directly decomposable.  
\end{lemma}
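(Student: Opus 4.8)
The plan is to read off a pair of complementary factor congruences of $Q$ directly from the direct decomposition of $\lmlt(Q)$, using the operators $\mathcal{O}_*$ and $\c{*}$ together with the Galois-connection identities recorded above. Suppose $\lmlt(Q)=G_1\times G_2$ with $G_1,G_2$ non-trivial. Viewed as an internal direct product, $G_1$ and $G_2$ are normal subgroups of $\lmlt(Q)$ with $G_1\cap G_2=1$ and $G_1G_2=\lmlt(Q)$, so $\mathcal{O}_{G_1}$ and $\mathcal{O}_{G_2}$ are genuine congruences of $Q$. These are the two congruences I would use to witness direct decomposability.

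First I would check that they meet trivially. Since $\mathcal{O}_N\leq \c{N}$ for every normal $N$, and $\c{*}$ sends intersection of normal subgroups to meet of congruences, we get
\[
\mathcal{O}_{G_1}\wedge \mathcal{O}_{G_2}\leq \c{G_1}\wedge \c{G_2}=\c{G_1\cap G_2}=\c{1}=\lambda_Q=0_Q,
\]
where the final equality is exactly faithfulness. Next I would check that they compose to the full relation: by the identity $\mathcal{O}_{NM}=\mathcal{O}_N\circ \mathcal{O}_M$ and $G_1G_2=\lmlt(Q)$,
\[
\mathcal{O}_{G_1}\circ \mathcal{O}_{G_2}=\mathcal{O}_{G_1G_2}=\mathcal{O}_{\lmlt(Q)}=1_Q,
\]
the last equality holding because $Q$ is connected, i.e.\ $\lmlt(Q)$ is transitive, so any two points lie in a common $\mathcal{O}_{\lmlt(Q)}$-class. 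By the congruence criterion for direct decomposability recalled above, this already yields $Q\cong Q/\mathcal{O}_{G_1}\times Q/\mathcal{O}_{G_2}$.

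It remains to rule out a trivial decomposition, and this is the step where both hypotheses are genuinely used. Because $\lmlt(Q)\leq \Sym_Q$ acts faithfully, a non-trivial $G_i$ must move some point, so $\mathcal{O}_{G_i}\neq 0_Q$ for $i=1,2$. If, say, $\mathcal{O}_{G_1}=1_Q$, then the meet identity forces $\mathcal{O}_{G_2}=\mathcal{O}_{G_1}\wedge \mathcal{O}_{G_2}=0_Q$, contradicting $G_2\neq 1$; hence $\mathcal{O}_{G_1},\mathcal{O}_{G_2}\notin\{0_Q,1_Q\}$ and both factors are non-trivial.

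The argument is short, so there is no serious obstacle; the only real decision is the choice of operator. The tempting route is to work throughout with a single operator, but $\mathcal{O}_*$ has the clean composition/join identity while $\c{*}$ has the clean meet identity, so the proof succeeds precisely by taking the $\mathcal{O}$-congruences of the two factors and bounding their meet through $\c{*}$. The delicate point is the non-triviality check, where faithfulness (to force $\mathcal{O}_{G_i}\neq 0_Q$, via $\c{1}=\lambda_Q=0_Q$) and connectedness (to force the composition to equal $1_Q$) are both indispensable.
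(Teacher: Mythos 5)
Your proof is correct and follows essentially the same route as the paper's: both arguments take the two direct factors as normal subgroups $N,M$ of $\lmlt(Q)$, use $\c{N}\wedge\c{M}=\c{N\cap M}=\lambda_Q=0_Q$ (faithfulness) for the meet, $\mathcal{O}_N\circ\mathcal{O}_M=\mathcal{O}_{NM}=\mathcal{O}_{\lmlt(Q)}=1_Q$ (connectedness) for the composition, and the fact that a non-trivial subgroup of $\Sym_Q$ moves a point for non-triviality. The only cosmetic difference is that you take $\mathcal{O}_{G_i}$ as the witnessing pair while the paper takes $\c{G_i}$; since $\mathcal{O}_{G_i}\leq\c{G_i}$, both choices work with the identical computations.
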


\begin{proof}


Assume that $\lmlt(Q)$ is directly decomposable, that is, there are normal subgroups $1 \neq N,M\trianglelefteq\lmlt(Q)$ such that $N\cap M=1$ and $\lmlt(Q)=NM$. Then $0_Q\neq \mathcal{O}_N\leq \c{N}$, $0_Q\neq \mathcal{O}_M\leq \c{M}$ and $\c{N}\wedge\c{M}=\c{N\cap M}=\c{1}= \lambda_Q=0_Q$. Moreover $\c{N}\circ \c{M}\geq \mathcal{O}_N\circ \mathcal{O}_M=\mathcal{O}_{NM}=\mathcal{O}_{\lmlt(Q)}=1_Q$. Then $Q$ is directly decomposable.
\end{proof}

For faithful left quasigroups, subdirect irreducibility can also be reduced to the same property for the left multiplication group.

\begin{lemma}\label{SI LQGs}
Let $Q$ be a faithful left quasigroup. 
\begin{itemize}
    \item[(i)] If $Q$ is subdirectly reducible, then $\lmlt(Q)$ is subdirectly reducible. 
\item[(ii)]  If $Q$ is a quandle and $\lmlt(Q)$ is subdirectly reducible, then $Q$ is subdirectly reducible.
\end{itemize}
\end{lemma}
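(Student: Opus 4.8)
The plan is to move the witnessing families across the two Galois connections $(\dis_*,\c{*})$ and $(\dis^*,\mathcal{O}_*)$ recorded above, choosing in each direction the operator that turns meets of congruences into intersections of subgroups (or conversely) with \emph{equality} rather than mere inequality. Two standing observations from faithfulness will be used repeatedly. First, $\lambda_Q=0_Q$, so $\c{1}=\lambda_Q=0_Q$, and by the stated equivalence $\dis_\alpha=1\iff\alpha\leq\lambda_Q$ we get $\dis_\alpha=1\iff\alpha=0_Q$. Second, since $\lmlt(Q)\leq\Sym_Q$ acts faithfully on $Q$, every nontrivial normal subgroup $N$ moves some point, and hence $\mathcal{O}_N\neq 0_Q$.

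For (i), suppose $Q$ is subdirectly reducible, witnessed by $\{\alpha_i\}_{i\in I}\subseteq\con(Q)\setminus\{0_Q\}$ with $\bigwedge_{i}\alpha_i=0_Q$. I would set $N_i=\dis^{\alpha_i}=\dis(Q)\cap\lmlt^{\alpha_i}$; each $N_i$ is normal in $\lmlt(Q)$, being an intersection of the normal subgroups $\dis(Q)$ and $\lmlt^{\alpha_i}$. Each $N_i$ is nontrivial because $\pi_{\alpha_i}(L_xL_y^{-1})=1$ whenever $x\,\alpha_i\,y$ gives $\dis_{\alpha_i}\leq\dis^{\alpha_i}$, while $\dis_{\alpha_i}\neq 1$ by the first observation and $\alpha_i\neq 0_Q$. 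Finally, unwinding $\lmlt^\alpha=\{h:h(x)\,\alpha\,x\ \forall x\}$ pointwise yields $\lmlt^{\bigwedge_i\alpha_i}=\bigcap_i\lmlt^{\alpha_i}$, whence $\bigcap_i N_i=\dis^{\bigwedge_i\alpha_i}=\dis^{0_Q}=1$. Thus $\{N_i\}$ is a family of nontrivial normal subgroups of $\lmlt(Q)$ with trivial intersection, so $\lmlt(Q)$ is subdirectly reducible.

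For (ii), suppose $\lmlt(Q)$ is subdirectly reducible, witnessed by nontrivial normal subgroups $\{N_j\}_{j\in J}$ with $\bigcap_j N_j=1$. Here I would use that $Q$ is a quandle, so that $L_x\in\aut{Q}$ and the relations $\alpha_j=\c{N_j}$ are genuine congruences (this self-distributivity is exactly what fails for a general left quasigroup, which is why the quandle hypothesis appears only in this direction). Each $\alpha_j$ is nonzero, since $\alpha_j\geq\mathcal{O}_{N_j}\neq 0_Q$ by the second observation. Directly from $\c{N}=\{(x,y):L_xL_y^{-1}\in N\}$ one reads off $\bigwedge_j\c{N_j}=\c{\bigcap_j N_j}=\c{1}=\lambda_Q=0_Q$. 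Hence $\{\alpha_j\}$ witnesses that $Q$ is subdirectly reducible.

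The only genuine subtlety, and the step I would verify most carefully, is that the binary identities $\dis^{\alpha\wedge\beta}=\dis^\alpha\cap\dis^\beta$ and $\c{N\cap M}=\c{N}\wedge\c{M}$ hold verbatim for arbitrary families, since subdirect reducibility is defined through a possibly infinite meet. Both extensions are immediate from the pointwise descriptions of $\lmlt^\alpha$ and of $\c{N}$, but they must be invoked in full generality. Beyond this, the entire argument reduces to the correct choice of transport operator: $\dis^*$ (not $\dis_*$) in (i), and $\c{*}$ (not $\mathcal{O}_*$) in (ii), precisely so that meet-preservation points in the useful direction.
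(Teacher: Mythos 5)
Your proof is correct and follows essentially the same route as the paper's: in (i) you transport the witnessing family of congruences to the normal subgroups $\dis^{\alpha_i}$, using faithfulness to get $\dis_{\alpha_i}\neq 1$ and the meet-preservation $\dis^{\bigwedge_i\alpha_i}=\bigcap_i\dis^{\alpha_i}$; in (ii) you pass from normal subgroups $N_j$ to the congruences $\c{N_j}$, using $0_Q\neq\mathcal{O}_{N_j}\leq\c{N_j}$ and $\bigwedge_j\c{N_j}=\c{\bigcap_j N_j}=\lambda_Q=0_Q$. The only (harmless) differences are that you spell out the extension of the binary identities to arbitrary families and explicitly flag where the quandle hypothesis enters, both of which the paper leaves implicit.
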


\begin{proof}
(i) Assume that $Q$ is subdirectly reducible, that is, there exists a set $\setof{\alpha_i}{i\in I}\subseteq \con(Q)$ such that $\alpha_i\neq 0_Q$ for all $i\in I$ and $\beta=\bigwedge_{i\in I} \alpha_i=0_Q$. The left quasigroup $Q$ is faithful, so $\dis_{\alpha_i}\neq 1$ for every $i\in I$, so also the group $\dis^{\alpha_i}$ is not trivial for every $i\in I$. Then $\bigcap_{i\in I} \dis^{\alpha_i}=\dis^\beta=1$ and so $\lmlt(Q)$ is subdirectly reducible. 
%
%


(ii) Assume that $\lmlt(Q)$ is subdirectly reducible, that is, there exist $N,M\neq 1$ be normal subgroups of $\lmlt(Q)$ such that $N\cap M=1$. Then $0_Q\neq \mathcal{O}_N\leq \c{N}$, $0_Q\neq \mathcal{O}_M\leq \c{M}$ and $\c{N}\wedge\c{M}=\c{N\cap M}=\c{1}= \lambda_Q=0_Q$. Then $Q$ is subdirectly reducible.
\end{proof}


\subsection{Commutator theory for quandles}

Commutator theory for quandles can be understood in purely group-theoretical terms by looking at the properties of the displacement group and its subgroups. The main results are the following, and they will be freely used throughout the paper. Let $Q$ be a quandle:
\begin{itemize}
    \item 
    if $Q$ is faithful and $\alpha\in \con(Q)$ then $\alpha$ is abelian (resp. central) if and only if $\dis_\alpha$ is abelian (resp. central in $\dis(Q)$) \cite[Corollary 5.4]{CP}.

\item The quandle $Q$ is solvable (resp. nilpotent) if and only if $\dis(Q)$ is solvable (resp. nilpotent) \cite[Theorem 1.2]{CP}.



\item Affine quandles are abelian. On the other hand, connected abelian quandles are affine \cite[Corollary 3.4]{Medial}.


\item If $Q$ is finite and connected, then $\gamma_Q=\mathcal{O}_{\gamma_1(\dis(Q))}$ and $\dis^{\gamma_Q}=\gamma_1(\dis(Q))$ \cite[Proposition 1.7]{GB}.

%
 


\end{itemize}

For finite latin quandles, we have the following characterization of the center and its relative displacement group.
\begin{lemma}\label{center for latin}
    Let $Q$ be a finite latin quandle. Then $\zeta_Q=\mathcal{O}_{Z(\dis(Q))}$ and $\dis_{\zeta_Q}=Z(\dis(Q))$.
\end{lemma}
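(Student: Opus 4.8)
The plan is to transport both equalities through the Galois machinery relating $\con(Q)$ to the normal subgroups of $\lmlt(Q)$. Write $G=\dis(Q)$ and $Z=Z(G)$. Since $Q$ is latin it is faithful and connected, so I may freely use that a congruence $\alpha$ is central if and only if $\dis_\alpha$ is central in $G$, together with the latin collapse $\mathcal{O}_{\dis_\alpha}=\alpha=\c{\dis_\alpha}$ for every $\alpha$. First I would note that $Z$ is characteristic in $G$, hence normal in $\lmlt(Q)$, so that $\mathcal{O}_Z\in\con(Q)$. To see $\mathcal{O}_Z\le\zeta_Q$, apply the lower half of the general chain to $N=Z$ to get $\dis_{\mathcal{O}_Z}\le Z=Z(G)$; thus $\dis_{\mathcal{O}_Z}$ is central in $G$, and by the faithful characterization $\mathcal{O}_Z$ is a central congruence, so $\mathcal{O}_Z\le\zeta_Q$. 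Conversely, $\zeta_Q$ is central, so $\dis_{\zeta_Q}$ is central in $G$, i.e. $\dis_{\zeta_Q}\le Z$; applying the monotone operator $\mathcal{O}$ and using $\mathcal{O}_{\dis_{\zeta_Q}}=\zeta_Q$ gives $\zeta_Q=\mathcal{O}_{\dis_{\zeta_Q}}\le\mathcal{O}_Z$. Combining the two inclusions yields the first identity $\zeta_Q=\mathcal{O}_Z$.

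For the second identity, one inclusion is immediate from the previous paragraph: $\dis_{\zeta_Q}=\dis_{\mathcal{O}_Z}\le Z$. For the reverse inclusion I would use the upper half of the general chain, now $N\le\dis^{\mathcal{O}_N}$ with $N=Z$, to obtain $Z\le\dis^{\mathcal{O}_Z}=\dis^{\zeta_Q}$. The argument then closes as soon as the two displacement subgroups attached to $\zeta_Q$ coincide, $\dis^{\zeta_Q}=\dis_{\zeta_Q}$, since this turns $Z\le\dis^{\zeta_Q}$ into $Z\le\dis_{\zeta_Q}$ and hence, with the previous inclusion, into equality. This coincidence $\dis^\alpha=\dis_\alpha$ (valid for every $\alpha$) is the subgroup-level counterpart of the latin collapse, and I would invoke it from the latin part of \cite[Lemma 1.8]{Galois}.

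The main obstacle is exactly this coincidence $\dis^{\zeta_Q}=\dis_{\zeta_Q}$: the equality of the five congruences does not by itself force $\dis_\alpha=\dis^\alpha$, because equal images under the monotone operators $\mathcal{O}_{(-)}$ and $\c{(-)}$ need not arise from equal subgroups. If an internal proof is preferred over the citation, I would instead compute $\dis_{\zeta_Q}$ directly: if $x\,\mathcal{O}_Z\,y$ then $y=w(x)$ for some $w\in Z$, and since $w\in\aut{Q}$ one has $L_{w(x)}=wL_xw^{-1}$, so each generator $L_xL_y^{-1}$ equals the commutator $[L_x,w]$. Using $\lmlt(Q)=G\langle L_x\rangle$ and $[G,Z]=1$, this identifies $\dis_{\zeta_Q}$ with $[\lmlt(Q),Z]=(1-\widehat{L_x})Z$, so the required equality $\dis_{\zeta_Q}=Z$ becomes surjectivity of $1-\widehat{L_x}$ on $Z$. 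This is the one place where latinness (bijectivity of the right translations) is indispensable, and I would derive the surjectivity from the coset representation of $Q$, noting in particular that $Z$ meets every point stabilizer trivially because $\lmlt(Q)$ acts faithfully on $Q$.
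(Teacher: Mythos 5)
Your proof of the first identity is correct and essentially reconstructs what the paper gets by citing \cite[Corollary 5.10]{CP}: you derive $\zeta_Q=\mathcal{O}_{Z(\dis(Q))}$ from the characterization of central congruences via centrality of $\dis_\alpha$, together with the Galois inequalities and the latin collapse $\mathcal{O}_{\dis_\alpha}=\alpha$. The problem lies in the primary route you propose for the second identity. The coincidence $\dis_\alpha=\dis^\alpha$ for \emph{every} congruence $\alpha$ of a finite latin quandle is false, and \cite[Lemma 1.8]{Galois} does not assert it: it only gives the equality of the five congruences, and, as you yourself observe, equal images under $\mathcal{O}_{(-)}$ do not force the subgroups to coincide. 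Concretely, for the quandles of Proposition \ref{group K} one has $\dis_{\gamma_Q}=\gamma_2(G)\cong\Z_p^2$ while $\dis^{\gamma_Q}=\gamma_1(G)$ has order $2p^2$. So the inclusion $Z\le\dis^{\zeta_Q}$ cannot be upgraded to $Z\le\dis_{\zeta_Q}$ this way; the step you flag as ``the main obstacle'' is a genuine gap, not merely a missing citation.

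Your fallback computation is the right idea and does close the gap, but it is left as a sketch exactly where the work is. Once you know that $\dis_{\zeta_Q}$ is generated by the elements $\widehat{L_x}(w)w^{-1}$ with $w\in Z=Z(\dis(Q))$, i.e.\ that it equals the image of the endomorphism $w\mapsto\widehat{L_x}(w)w^{-1}$ of $Z$, surjectivity follows from injectivity by finiteness, and injectivity needs two facts you should make explicit: (a) $Z_x=1$ for every $x$, because $Z$ is a central subgroup of a transitive group (semiregularity is the relevant fact here, not faithfulness alone); and (b) in a latin quandle $L_x$ fixes only $x$ (if $x*y=y=y*y$ then $R_y(x)=R_y(y)$, hence $x=y$). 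Indeed, if $\widehat{L_x}(w)=w$ then $w$ commutes with $L_x$, so $L_x(w(x))=w(L_x(x))=w(x)$, whence $w(x)=x$ by (b) and $w=1$ by (a). The paper avoids the commutator computation altogether: it obtains $\dis_{\zeta_Q}\le Z$ from the Galois connection and then uses the product decomposition $Z=\dis_{\zeta_Q}Z_x$ together with $Z_x=1$, which is shorter. Your route is a legitimate alternative once the two facts above are supplied.
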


\begin{proof}
Recall that for finite latin quandles we have that the operators $\mathcal{O}_*$ and $\c{*}$ coincide. According to \cite[Corollary 5.10]{CP} we have $\zeta_Q=\c{Z(\dis(Q))}$ and so it coincides with $\mathcal{O}_{Z(\dis(Q))}=\mathcal{O}_{\dis_{\zeta_Q}}$ and $\dis_{\zeta_Q}\leq Z(\dis(Q))$ accordingly. The subgroup $Z(\dis(Q))$ is a central subgroup of a transitive group, and so $Z(\dis(Q))_x=1$ for every $x\in Q$. Therefore, $Z(\dis(Q))=\dis_{\zeta_Q}Z(\dis(Q))_x=\dis_{\zeta_Q}$.
\end{proof}

The blocks of congruences of quandles are subquandles, and the blocks of abelian congruences are abelian quandles (this follows directly from the abstract definition of abelianness of a congruence as defined in \cite{comm}). We can use the description of abelian quandles and the close relationship between the properties of congruences and the properties of subgroups of the displacement group to understand the features of abelian congruences (or more generally congruences with abelian blocks) to build some tools to study solvable quandles.

Under some further assumptions, the blocks of abelian congruences are affine, so we have the following description of their structure.

\begin{lemma}\label{below abelian}
    Let $Q$ be a finite latin quandle, and let $\alpha$ be an abelian congruence. Then:
    \begin{itemize}
        \item[(i)] The block $[a]_\alpha$ is affine, i.e $[a]_\alpha\cong \aff(A,f)$ for some abelian group $A$ and some $f\in \aut{A}$. 
        \item[ (ii)] If $\beta\leq\alpha$ then $[0]_\beta$ is an $f$-invariant subgroup of $A$. In particular, $|\setof{\beta\in \con(Q)}{\beta\leq \alpha}|\leq |\setof{N\leq A}{f(N)=N}|$.
    \end{itemize}

\end{lemma}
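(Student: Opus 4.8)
The plan is to deduce (i) from the structure theory of connected abelian quandles, after showing that every $\alpha$-block is itself \emph{latin}. Fix $a\in Q$ and put $B=[a]_\alpha$. Because $\alpha$ is a congruence and $a*a=a=a\backslash a$, for $x,y\in B$ we get $x*y\,\alpha\,a*a=a$ and $x\backslash y\,\alpha\,a\backslash a=a$, so $B$ is closed under both basic operations and is a subquandle. For $x\in B$ the maps $L_x$ and $R_x$ are injective on $Q$ (this is where latinness of $Q$ enters) and carry the finite set $B$ into itself, hence restrict to bijections of $B$; thus $B$ is latin and in particular connected. Since $\alpha$ is abelian, $B$ is an abelian quandle — immediate from the definition of an abelian congruence in \cite{comm} — and a connected abelian quandle is affine by \cite[Corollary 3.4]{Medial}. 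This gives $B\cong\aff(A,f)$ for some abelian group $A$ and $f\in\aut{A}$; since translations are automorphisms of $\aff(A,f)$ (verified below), we may post-compose the isomorphism with one to arrange that $a$ corresponds to $0\in A$.

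For (ii) I would first transport $\beta$ into the block. As $\beta\leq\alpha$, every $\beta$-class meeting $B$ lies inside $B$, so $\beta$ restricts to a congruence $\gamma$ of the subquandle $B$, and under $B\cong\aff(A,f)$ the class $[0]_\beta$ becomes $N:=[0]_\gamma\subseteq A$. It then remains to show that $N$ is an $f$-invariant subgroup. The affine operation is $x*y=(1-f)(x)+f(y)$, so $L_0=f$; as $f=L_0\in\aut{\aff(A,f)}$ fixes $0$ and preserves $\gamma$, we get $f(N)=f([0]_\gamma)=[0]_\gamma=N$, giving $f$-invariance. For the subgroup property the key point is that each translation $t_c\colon z\mapsto z+c$ is a quandle automorphism of $\aff(A,f)$, which is a one-line check using $(1-f)(c)+f(c)=c$. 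Applying $t_x$ (for $x\in N$) to a relation $y\,\gamma\,0$ with $y\in N$ yields $x+y\,\gamma\,x\,\gamma\,0$, so $x+y\in N$, and applying $t_{-x}$ to $x\,\gamma\,0$ yields $0\,\gamma\,-x$, so $-x\in N$. Hence $N\leq A$ is a subgroup, which is the substantive content of (ii).

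For the final cardinality bound I would show that $\beta\mapsto[0]_\beta$ is injective on $\setof{\beta\in\con(Q)}{\beta\leq\alpha}$. Connectedness of $Q$ does this: for any $b\in Q$ choose $g\in\lmlt(Q)$ with $g(a)=b$; since $g\in\aut{Q}$ preserves every congruence, $[b]_\beta=g([a]_\beta)$, so the single block $[a]_\beta$ determines all blocks of $\beta$ and hence $\beta$. As each $[a]_\beta$ is an $f$-invariant subgroup of $A$ by the previous paragraph, this injection gives $|\setof{\beta\in\con(Q)}{\beta\leq\alpha}|\leq|\setof{N\leq A}{f(N)=N}|$.

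The step I expect to demand the most care is the subgroup and $f$-invariance claim in (ii): one has to write the affine operation explicitly and check that translations are quandle automorphisms, as this is precisely what upgrades the congruence block $[0]_\gamma$ into an algebraic subobject of $A$. The rest is closure bookkeeping and the transitivity of $\lmlt(Q)$. (One could instead cite the known description of congruences of affine quandles via $f$-invariant subgroups and bypass the explicit computation.)
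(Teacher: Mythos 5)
Your overall strategy is sound, and for part (i) it coincides with the paper's: the blocks are latin subquandles (closure plus finiteness makes $L_x|_B$ and $R_x|_B$ bijections), hence connected; the abelianness of $\alpha$ makes them abelian quandles; and connected abelian quandles are affine. For part (ii) the paper simply cites two facts --- that a subquandle of a latin affine quandle through $0$ is an $f$-invariant subgroup (\cite[Proposition 3.14]{Principal}), and that connected quandles are congruence regular (\cite[Lemma 2.4]{Maltsev_paper}) --- whereas you reprove both by hand; your transitivity argument for the injectivity of $\beta\mapsto[0]_\beta$ is a correct direct proof of the regularity statement.

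There is, however, one step whose justification does not work as written: you deduce $x+y\,\gamma\,x$ from $y\,\gamma\,0$ by ``applying the translation $t_x$'', on the grounds that $t_x$ is a quandle automorphism. An automorphism of $B$ maps a congruence to a (possibly different) congruence; it need not preserve $\gamma$ pointwise, so being an automorphism is not by itself enough. What saves the argument is that the translations of a \emph{latin} affine quandle lie in $\lmlt(B)$: one computes $L_cL_0^{-1}=t_{(1-f)(c)}$, and latinness forces $1-f$ to be surjective, so every $t_x$ is of this form. Elements of $\lmlt(B)$ are compositions of the unary polynomial maps $L_c$ and $L_c^{-1}$, and these do preserve every congruence of $B$ --- that is the correct reason $t_x$ respects $\gamma$. (Your use of $f=L_0$ for the $f$-invariance is already justified in exactly this way.) With that repair the proof is complete; alternatively, as you note at the end, one can bypass the computation entirely by quoting the known description of subquandles of affine quandles, which is what the paper does.
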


\begin{proof}
%
(i) The blocks of $\alpha$ are abelian latin quandles. So, they are affine, that is, $[x]_\alpha\cong \aff(A,f)$ for some abelian group $A$ and some $f\in \aut{A}$. 

(ii) Given $\beta\leq \alpha$ then $[0]_\beta$ is a subquandle of $\aff(A,f)$ and so it is an $f$-invariant subgroup of $A$ \cite[Proposition 3.14]{Principal}. Connected quandles are congruence regular (see \cite[Lemma 2.4]{Maltsev_paper}), so the map $\beta \mapsto [0]_\beta $ from $\setof{\beta\in \con(Q)}{\beta\leq \alpha}$ to $\setof{N\leq A}{f(N)=N}$ is injective.
\end{proof}

We are particularly interested to understand minimal congruences of quandles, in particular for solvable ones (e.g. finite latin quandles, see Corollary 1.3 of \cite{CP}).

\begin{lemma}\label{minimal cong}
Let $Q$ be a finite faithful quandle, and $\alpha\in \ma_Q$. 
\begin{itemize}
    \item[(i)] $\alpha=\mathcal{O}_{\dis_\alpha}$ and $\dis_\alpha$ is a minimal normal subgroup of $\lmlt(Q)$.
    \item[(ii)] If $\alpha$ is abelian, then $\dis_\alpha$ is an elementary abelian $p$-group for some prime $p$ and the blocks of $\alpha$ have size a power of $p$.
\item[(iii)]  If $Q$ is solvable, then $\alpha$ is abelian. In particular, $\nu_Q$ is abelian.
\end{itemize}

\end{lemma}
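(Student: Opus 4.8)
The plan is to prove the three parts in order, using throughout the Galois-connection chains $\mathcal{O}_{\dis_\alpha}\leq \mathcal{O}_{\dis^\alpha}\leq\alpha\leq \c{\dis_\alpha}$ and $\dis_{\mathcal{O}_N}\leq \dis_{\c{N}}\leq N$, together with the fact that $\lmlt(Q)\leq \Sym_Q$ acts faithfully and the commutator-theoretic dictionary relating abelianness of $\alpha$ to that of $\dis_\alpha$.

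For part (i) I would first record the elementary observation that $\mathcal{O}_N=0_Q$ forces $N=1$: indeed $\mathcal{O}_N=0_Q$ says that every element of $N$ fixes every point of $Q$, and since $\lmlt(Q)$ sits inside $\Sym_Q$ the only such element is the identity. Next, faithfulness gives $\lambda_Q=0_Q$, so the minimal (hence nonzero) congruence $\alpha$ does not lie below $\lambda_Q$, whence $\dis_\alpha\neq 1$. Now $\mathcal{O}_{\dis_\alpha}\leq \alpha$ and minimality of $\alpha$ leave only $\mathcal{O}_{\dis_\alpha}\in\{0_Q,\alpha\}$; the value $0_Q$ is excluded by $\dis_\alpha\neq1$ and the observation above, so $\alpha=\mathcal{O}_{\dis_\alpha}$. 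For minimality of $\dis_\alpha$, take $1\neq N\trianglelefteq \lmlt(Q)$ with $N\leq\dis_\alpha$; monotonicity of $\mathcal{O}_*$ gives $\mathcal{O}_N\leq\mathcal{O}_{\dis_\alpha}=\alpha$, minimality together with $N\neq 1$ forces $\mathcal{O}_N=\alpha$, and then $\dis_\alpha=\dis_{\mathcal{O}_N}\leq N$, so $N=\dis_\alpha$.

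For part (ii), the commutator dictionary turns ``$\alpha$ abelian'' into ``$\dis_\alpha$ abelian''. Combined with part (i), $\dis_\alpha$ is an abelian minimal normal subgroup of the finite group $\lmlt(Q)$; since a minimal normal subgroup is characteristically simple (its characteristic subgroups are normal in $\lmlt(Q)$) and a finite characteristically simple abelian group is a direct product of copies of $\Z_p$, we get that $\dis_\alpha$ is elementary abelian of exponent $p$. Finally, because $\alpha=\mathcal{O}_{\dis_\alpha}$ its blocks are exactly the $\dis_\alpha$-orbits, and the block through $x$ has size $[\dis_\alpha:(\dis_\alpha)_x]$, a divisor of the $p$-power $|\dis_\alpha|$.

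For part (iii), I would use that $Q$ solvable means $\dis(Q)$ is solvable, and $\dis_\alpha\leq \dis(Q)$ by monotonicity (as $\alpha\leq 1_Q$), so $\dis_\alpha$ is a solvable minimal normal subgroup; being characteristically simple with solvable, hence cyclic-of-prime-order, simple factors, it is elementary abelian, in particular abelian, and the dictionary yields that $\alpha$ is abelian. The claim about $\nu_Q$ is where care is needed, since a join of abelian congruences need not be abelian: here one uses $\dis_{\nu_Q}=\prod_{\alpha\in\ma_Q}\dis_\alpha$ (from $\dis_{\alpha\vee\beta}=\dis_\alpha\dis_\beta$ and finiteness of $\con(Q)$), together with the fact that, by part (i), distinct minimal congruences yield distinct minimal normal subgroups, which therefore meet trivially, so $[\dis_\alpha,\dis_\beta]\leq \dis_\alpha\cap\dis_\beta=1$ for $\alpha\neq\beta$. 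Thus $\dis_{\nu_Q}$ is a product of pairwise-commuting abelian subgroups, hence abelian, so $\nu_Q$ is abelian. The only genuine obstacle I anticipate is this last point: one must resist deducing abelianness of $\nu_Q$ directly from that of the individual minimal congruences, and instead exploit that the corresponding displacement subgroups are minimal normal and therefore commute pairwise.
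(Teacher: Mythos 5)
Your proof is correct and follows essentially the same route as the paper's: the Galois-connection chains for part (i), the abelianness dictionary of \cite[Corollary 5.4]{CP} plus characteristic simplicity of the minimal normal subgroup $\dis_\alpha$ for part (ii), and for part (iii) the identity $\dis_{\nu_Q}=\langle \dis_\alpha : \alpha\in\ma_Q\rangle$ together with $[\dis_\alpha,\dis_\beta]\leq\dis_\alpha\cap\dis_\beta=1$. The only cosmetic difference is that in (iii) you derive abelianness of $\dis_\alpha$ from characteristic simplicity of a solvable group, where the paper notes directly that the proper characteristic subgroup $[\dis_\alpha,\dis_\alpha]$ must be trivial; and you rightly flag (and correctly handle) the point the paper leaves implicit, namely that distinct minimal congruences give distinct, hence trivially intersecting, minimal normal subgroups.
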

\begin{proof}

(i) Let $\beta=\mathcal{O}_{\dis_\alpha}\leq \alpha$. So, $\beta=0_Q$ or $\alpha=\beta$. If $\beta=0_Q$ then $\dis_\alpha=1$. Thus, $\alpha\leq \lambda_Q=0_Q$, contradiction.

Assume that $N\leq \dis_\alpha$. Then $\delta=\mathcal{O}_N\leq \alpha$. Hence $\delta=0_Q$ and accordingly $N=1$ or $\delta=\alpha$ and so $\dis_\alpha\leq N$ (see \cite{CP}, Section 3.2). 

(ii) According to \cite[Corollary 5.4]{CP} the subgroup $\dis_\alpha$ is abelian. Let $H$ be a characteristic subgroup of $\dis_\alpha$. Then $H$ is normal in $\lmlt(Q)$ and therefore trivial since $\dis_\alpha$ is a minimal normal subgroup of $\lmlt(Q)$. Hence $\dis_\alpha$ is necessarily an elementary abelian $p$-group for some prime $p$ (the $p$-Sylow subgroup and the subgroups of the form $p^n\dis_\alpha$ for $n\in \mathbb{N}$ are characteristic). So, $|[x]|=[\dis_\alpha:(\dis_\alpha)_x]$ is a power of $p$.

(iii) The group $\dis(Q)$ is solvable, since $Q$ is solvable. Let $\alpha$ be a minimal congruence. Then $\dis_\alpha$ is a solvable minimal normal group. Therefore, $[\dis_\alpha,\dis_\alpha]<\dis_\alpha$, and thus $[\dis_\alpha,\dis_\alpha]=1$. Hence $\dis_\alpha$ is abelian and so is $\alpha$. According to \cite[Proposition 3.2(2)]{CP} we have 
$$\dis_{\nu_Q}=\dis_{\vee \{\alpha\in \ma_Q\}}=\langle\dis_\alpha,\,\alpha \in \ma_Q\rangle.$$
Moreover, $[\dis_\alpha,\dis_\beta]\leq \dis_\alpha\cap\dis_\beta=1$ and the subgroup $\dis_\alpha$ is abelian for every $\alpha,\beta\in\ma_Q$. Hence $\dis_{\nu_Q}$ is abelian and so $\nu_Q$ is also abelian. 
\end{proof}





\begin{lemma}\label{dis_gamma}
Let $Q$ be a non-nilpotent quandle. Then: 
\begin{itemize}
  \item[(i)] If $Q$ is faithful and $\gamma_Q\in \ma_Q$, then $\dis_{\gamma_Q}=\gamma_2(\dis(Q))$. 
    \item[(i)] If $\ma_Q=\{\gamma_Q\}$, then $Z(\dis(Q))=1$.
  
\end{itemize}
\end{lemma}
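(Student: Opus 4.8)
The plan is to handle both statements at once by pinning down $\dis_{\gamma_Q}$ exactly, playing a general commutator inclusion against the minimality of $\dis_{\gamma_Q}$ as a normal subgroup of $\lmlt(Q)$. Write $G=\dis(Q)$. Throughout I work in the standing setting (finite, connected, faithful), so that the recorded identity $\dis^{\gamma_Q}=\gamma_1(G)=[G,G]$ (\cite[Proposition 1.7]{GB}) is available, and so that $Q$ non-nilpotent gives $G$ non-nilpotent.

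For the first statement, I would begin from the group-theoretic inclusion $[\dis(Q),\dis^{\alpha}]\leq \dis_\alpha$, valid for every $\alpha\in\con(Q)$. This is immediate on generators: for $h\in\lmlt^\alpha$ one has $[h,L_x]=L_{h(x)}L_x^{-1}\in\dis_\alpha$ since $h(x)\,\alpha\,x$, and this extends to all of $\lmlt(Q)$ by the commutator expansion and the normality of $\dis_\alpha$ (this is the content of \cite[Proposition 3.3]{CP}). Taking $\alpha=\gamma_Q$ and substituting $\dis^{\gamma_Q}=[G,G]$ yields $\gamma_2(G)=[[G,G],G]\leq \dis_{\gamma_Q}$. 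Next I would note $\gamma_2(G)\neq 1$: otherwise $[G,G]\leq Z(G)$, so $G$ would be nilpotent of class at most $2$, contradicting non-nilpotency. Since $\gamma_2(G)$ is the commutator of the two $\lmlt(Q)$-normal subgroups $[G,G]$ and $G=\dis(Q)$, it is normal in $\lmlt(Q)$; and because $\gamma_Q\in\ma_Q$, Lemma \ref{minimal cong}(i) makes $\dis_{\gamma_Q}$ a minimal normal subgroup of $\lmlt(Q)$. Hence the chain $1\neq\gamma_2(G)\leq\dis_{\gamma_Q}$ forces $\gamma_2(G)=\dis_{\gamma_Q}$, as required.

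For the second statement I would argue by contradiction, assuming $Z:=Z(G)\neq 1$. As $Z$ is characteristic in $G\trianglelefteq\lmlt(Q)$ it is normal in $\lmlt(Q)$, so $\mathcal{O}_Z\in\con(Q)$; and since $G\leq \Sym_Q$ acts faithfully, a nontrivial $Z$ moves some point, giving $\mathcal{O}_Z\neq 0_Q$. Because $\gamma_Q$ is the only minimal congruence, every nonzero congruence lies above it, so $\gamma_Q\leq\mathcal{O}_Z$ and therefore $\dis_{\gamma_Q}\leq\dis_{\mathcal{O}_Z}\leq Z$. Feeding in the first statement, $\gamma_2(G)=\dis_{\gamma_Q}\leq Z(G)$, whence $[\gamma_2(G),G]\leq[Z(G),G]=1$; thus $G$ is nilpotent, so $Q$ is nilpotent, contradicting the hypothesis. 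This forces $Z(\dis(Q))=1$.

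The hard part is the exact determination of $\dis_{\gamma_Q}$ in the first statement. One inclusion, $\gamma_2(G)\leq\dis_{\gamma_Q}$, is formal once the commutator inclusion $[\dis(Q),\dis^{\gamma_Q}]\leq\dis_{\gamma_Q}$ is in hand; the reverse inclusion is \emph{not} formal and is exactly where the hypothesis $\gamma_Q\in\ma_Q$ enters, through the minimality of $\dis_{\gamma_Q}$ together with the fact that $\gamma_2(G)\neq 1$ (guaranteed by non-nilpotency). The remaining care is bookkeeping: ensuring that $\dis^{\gamma_Q}=\gamma_1(G)$ and Lemma \ref{minimal cong}(i) are invoked under the hypotheses they require, which hold in the finite connected faithful setting relevant to the latin quandles under study.
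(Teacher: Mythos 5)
Your proof of part (i) is correct and follows the paper's own argument essentially verbatim: establish $\gamma_2(\dis(Q))=[\gamma_1(\dis(Q)),\dis(Q)]\leq \dis_{\gamma_Q}$ via the commutator inclusion from \cite[Proposition 3.3]{CP}, observe that $\gamma_2(\dis(Q))$ is normal in $\lmlt(Q)$ and nontrivial by non-nilpotency, and conclude by the minimality of $\dis_{\gamma_Q}$ as a normal subgroup of $\lmlt(Q)$ (Lemma \ref{minimal cong}(i)); the only cosmetic difference is that you rule out $\gamma_2(\dis(Q))=1$ before invoking minimality rather than after. For part (ii) your route is a genuine (if mild) variant. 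The paper stays at the congruence level: it notes that $\beta=\mathcal{O}_{Z(\dis(Q))}$ is a nontrivial \emph{central} congruence, that $\gamma_Q\leq\beta$ by uniqueness of the minimal congruence, and hence $\gamma_2(Q)=[\gamma_Q,1_Q]\leq[\beta,1_Q]=0_Q$, so $Q$ is nilpotent --- no appeal to part (i) is needed. You instead descend to the group level, using $\gamma_Q\leq\mathcal{O}_{Z}$ and the Galois inequalities to get $\dis_{\gamma_Q}\leq Z(\dis(Q))$, then feed in part (i) to obtain $\gamma_3(\dis(Q))=1$ and transfer nilpotency back to $Q$ via \cite[Theorem 1.2]{CP}. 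Both are valid; the paper's version is slightly more economical and, because it bypasses part (i), does not need the faithfulness hypothesis that (i) carries, whereas your version of (ii) inherits the standing finite/connected/faithful assumptions you declare (harmless in the latin setting where the lemma is applied, but worth noting since item (ii) as stated does not assume faithfulness).
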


\begin{proof}
Let $G=\dis(Q)$.

(i) According to \cite[Proposition 3.3(1)]{CP} we have $\gamma_2(G)=[G,\gamma_1(G)]\leq \dis_{\gamma_Q}\neq 1$ and Lemma \ref{minimal cong}(i) implies that the subgroup $\dis_{\gamma_Q}$ is a minimal normal subgroup of $\lmlt(Q)$. So, $\gamma_2(G)=1$ or $\gamma_2(G)=\dis_{\gamma_Q}$. In the first case, $G$ is nilpotent and so also $Q$ is nilpotent by \cite[Theorem 1.2(2)]{CP}, contradiction. Then $\dis_{\gamma_Q}=\gamma_2(G)$.

(ii) Assume that $Z(G)\neq 1$. Then $\beta=\mathcal{O}_{Z(G)}\neq 0_Q$. According to \cite[Lemma 5.12]{CP} $\beta$ is central and $\gamma_Q\leq \beta$ since $\gamma_Q$ is the only minimal congruence of $Q$. Thus, $\gamma_2(Q)=[\gamma_Q,1_Q]=0_Q$ and so $Q$ is nilpotent, which is a contradiction.
\end{proof}


We now introduce a corollary of Lemma 1.4 of \cite{LSS}.
\begin{corollary}
    \label{embedding as quandle}
	Let $Q$ be a connected quandle, $\alpha\in \con(Q)$ such that $Q/\alpha=Sg([a_1],\ldots, [a_n])$. Then:
	\begin{itemize}
		\item[(i)] the mapping
		\begin{displaymath}\psi: \dis^\alpha\longrightarrow \prod_{i =1}^n \aut{[a_i]},\quad h\mapsto (h|_{[a_1]},\ldots, h|_{[a_n]}),
		\end{displaymath}
		is an embedding of groups and $\dis^\alpha|_{[a_i]_\alpha}\cong \dis^\alpha|_{[a_j]_\alpha}$ for every $1\leq i,j\leq n$.
		
		\item[(ii)] If $Q$ is a finite latin quandle and $\alpha$ is abelian, then $\dis_\alpha|_{[a]}\cong \dis([a])$ and $\dis_\alpha$ embeds in $\dis([a])^n$. In particular $|\dis_\alpha|$ divides $|[a]|^n$. 
	\end{itemize}

\end{corollary}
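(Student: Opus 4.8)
\emph{Setup and overall strategy.} The plan is to realise $\psi$ as restriction and to analyse it in three stages: I will first check that $\psi$ is a homomorphism with the claimed symmetry among the restricted groups, then prove injectivity using the generation hypothesis, and finally bootstrap to the latin abelian statement (ii). Throughout I use that every $h\in\dis^\alpha\subseteq\lmlt^\alpha$ satisfies $h(x)\,\alpha\,x$ for all $x$, so, since the blocks of $\alpha$ are subquandles and $h\in\lmlt(Q)\leq\aut{Q}$, $h$ preserves each block setwise and restricts to $h|_{[a_i]}\in\aut{[a_i]}$.

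\emph{Homomorphism and symmetry of the restrictions.} Because restriction respects composition on a fixed block, $\psi$ is a group homomorphism into $\prod_{i}\aut{[a_i]}$. For the isomorphism $\dis^\alpha|_{[a_i]}\cong\dis^\alpha|_{[a_j]}$ I would invoke connectedness: $\lmlt(Q)$ is transitive on $Q$, hence on the blocks, so pick $g\in\lmlt(Q)$ with $g([a_i])=[a_j]$. As $\dis^\alpha=\dis(Q)\cap\lmlt^\alpha$ is normal in $\lmlt(Q)$, conjugation by $g$ preserves $\dis^\alpha$, and writing $\bar g=g|_{[a_i]}\colon[a_i]\to[a_j]$ (a quandle isomorphism) one gets $(ghg^{-1})|_{[a_j]}=\bar g\, h|_{[a_i]}\,\bar g^{-1}$, so $\dis^\alpha|_{[a_j]}=\bar g\,(\dis^\alpha|_{[a_i]})\,\bar g^{-1}$, an isomorphic copy.

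\emph{Injectivity.} This is where $Q/\alpha=Sg([a_1],\ldots,[a_n])$ enters. Suppose $h\in\ker\psi$, so $h|_{[a_i]}=\mathrm{id}$ for all $i$, and set $B=\setof{[x]\in Q/\alpha}{h|_{[x]}=\mathrm{id}}$. I claim $B$ is a subquandle of $Q/\alpha$: if $[x],[y]\in B$ and $w\in[x*y]$, then $x\backslash w\,\alpha\,y$ lies in $[y]$, whence $h(x\backslash w)=x\backslash w$ and $h(w)=h(x*(x\backslash w))=h(x)*h(x\backslash w)=x*(x\backslash w)=w$; a symmetric computation handles $\backslash$. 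Thus $B$ is closed and contains $[a_1],\ldots,[a_n]$, forcing $B=Q/\alpha$ and $h=\mathrm{id}$. This yields the embedding (and is in essence \cite[Lemma 1.4]{LSS}).

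\emph{The latin abelian refinement and the main obstacle.} First, $\dis_\alpha\leq\dis^\alpha$, so $\psi$ restricts to an embedding of $\dis_\alpha$. Since $Q$ is latin, each $R_s$ is a bijection of $Q$ preserving $[a]$, so the blocks are latin subquandles; with $\alpha$ abelian, Lemma \ref{below abelian}(i) gives $[a]\cong\aff(A,f)$ with $1-f$ invertible, so $\dis([a])\cong A$ acts regularly on $[a]$ and $|\dis([a])|=|[a]|$. The crux, and the step I expect to be the main obstacle, is the identity $\dis_\alpha|_{[a]}=\dis([a])$: the inclusion $\supseteq$ is immediate since $L_uL_v^{-1}\in\dis_\alpha$ for $u,v\in[a]$ restricts to a displacement of $[a]$, but $\subseteq$ is not generator-wise obvious because a generator $L_xL_y^{-1}$ with $x\,\alpha\,y$ need not have $x,y\in[a]$. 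I would circumvent this abstractly: $\alpha$ abelian makes $\dis_\alpha$ abelian \cite[Corollary 5.4]{CP}, so $\dis_\alpha|_{[a]}$ is an abelian subgroup of $\Sym_{[a]}$ containing the regular abelian group $\dis([a])$; as a regular abelian permutation group is self-centralizing, $\dis_\alpha|_{[a]}\leq C_{\Sym_{[a]}}(\dis([a]))=\dis([a])$, giving equality. Finally, the blocks of $\alpha$ are pairwise isomorphic by connectedness, so $\dis([a_i])\cong\dis([a])$, and part (i) gives $\dis_\alpha\hookrightarrow\prod_i\dis([a_i])\cong\dis([a])^n$; since $|\dis([a])|=|[a]|$, the order $|\dis_\alpha|$ divides $|[a]|^n$.
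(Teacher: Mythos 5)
Your proof is correct, but it is essentially a self-contained reproof of the external result the paper leans on: the paper's own proof of this corollary consists of two lines, namely ``apply Lemma 1.4(i) of \cite{LSS} to $\dis^\alpha$'' for (i), and for (ii) the observation that the blocks are finite latin affine quandles, that $\dis_\alpha$ is abelian and hence acts \emph{regularly} on each block, followed by an appeal to Lemma 1.4(ii) of \cite{LSS}. Your injectivity argument (the set $B$ of blocks on which $h$ restricts to the identity is a subalgebra of $Q/\alpha$ containing the generators) and your conjugation argument for $\dis^\alpha|_{[a_i]}\cong\dis^\alpha|_{[a_j]}$ are exactly the content of that cited lemma, worked out from scratch. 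The one place where your route genuinely diverges is the identification $\dis_\alpha|_{[a]}=\dis([a])$: the paper gets regularity of $\dis_\alpha|_{[a]}$ from ``transitive abelian permutation groups are regular'' (transitivity coming from $\dis([a])\leq\dis_\alpha|_{[a]}$) and then compares orders, whereas you use the self-centralizing property $C_{\Sym_{[a]}}(\dis([a]))=\dis([a])$ of a regular abelian group. Both are valid and of essentially the same depth; yours has the mild advantage of producing the equality of subgroups of $\Sym_{[a]}$ directly rather than via a cardinality count, and you correctly isolated the real subtlety (that the generators $L_xL_y^{-1}$ of $\dis_\alpha$ need not come from pairs inside $[a]$), which the citation-based proof glosses over. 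The supporting facts you use ($\dis_\alpha\leq\dis^\alpha$, latinness and affineness of the blocks, abelianness of $\dis_\alpha$ from \cite[Corollary 5.4]{CP} together with faithfulness of latin quandles) are all available in the paper, so there is no gap.
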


 \begin{proof}
	(i) Apply Lemma 1.4(i) of \cite{LSS} to $\dis^\alpha$.
	
	(ii) The blocks of $\alpha$ are finite latin affine quandles. The group $\dis_\alpha$ is abelian and so $\dis_\alpha|_{[x]}$ is regular on $[x]$. Then Lemma 1.4(ii) of \cite{LSS} applies to $\dis_\alpha$. So $\dis_\alpha$ embeds in $\dis([a])^n$ that has size $|[a]|^n$.  
\end{proof}

In the following, we are displaying some sufficient conditions for a congruence of a quandle to be central. 
\begin{lemma}\label{central 2}
    Let $Q$ be a connected faithful quandle and $\alpha\in \con(Q)$. If $\dis_\alpha$ is cyclic, then $\alpha$ is central.
\end{lemma}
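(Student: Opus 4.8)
The plan is to translate centrality of $\alpha$ into a purely group-theoretic statement and then exploit the abelianness of the automorphism group of a cyclic group. Since $Q$ is faithful, the commutator-theoretic dictionary (\cite[Corollary 5.4]{CP}, recalled above) says that $\alpha$ is central if and only if $\dis_\alpha$ is central in $\dis(Q)$, i.e.\ $\dis_\alpha\leq Z(\dis(Q))$. So it suffices to show that every element of $\dis(Q)$ commutes with $\dis_\alpha$.

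First I would use that $\dis_\alpha$ is a cyclic \emph{normal} subgroup of $\lmlt(Q)$. Conjugation therefore defines a homomorphism $\lmlt(Q)\to\aut{\dis_\alpha}$, and since $\dis_\alpha$ is cyclic the target $\aut{\dis_\alpha}$ is abelian. Consequently the derived subgroup $[\lmlt(Q),\lmlt(Q)]$ is contained in the kernel of this homomorphism, that is, in the centralizer $C_{\lmlt(Q)}(\dis_\alpha)$ of $\dis_\alpha$ in $\lmlt(Q)$.

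The key step is then to show $\dis(Q)\leq[\lmlt(Q),\lmlt(Q)]$, which is where connectedness enters. For $g\in\lmlt(Q)\leq\aut{Q}$ one has $gL_xg^{-1}=L_{g(x)}$, and transitivity of $\lmlt(Q)$ makes all the $L_x$ conjugate, hence all equal in the abelianization $\lmlt(Q)/[\lmlt(Q),\lmlt(Q)]$. Thus each generator $L_xL_y^{-1}$ of $\dis(Q)$ maps to the identity there, giving $\dis(Q)\leq[\lmlt(Q),\lmlt(Q)]$. Combining this with the previous paragraph yields $\dis(Q)\leq C_{\lmlt(Q)}(\dis_\alpha)$, i.e.\ $\dis_\alpha\leq Z(\dis(Q))$, and the dictionary above then delivers centrality of $\alpha$. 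The only mildly delicate point is the inclusion $\dis(Q)\leq[\lmlt(Q),\lmlt(Q)]$; everything else is routine once the conjugation homomorphism into the abelian group $\aut{\dis_\alpha}$ is in place.
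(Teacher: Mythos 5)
Your proof is correct and follows essentially the same route as the paper: both arguments use that the normal cyclic subgroup $\dis_\alpha$ has abelian automorphism group, so the conjugation homomorphism kills $[\lmlt(Q),\lmlt(Q)]\supseteq\dis(Q)$, and then invoke \cite[Corollary 5.4]{CP} to translate centrality of $\dis_\alpha$ in $\dis(Q)$ back to centrality of $\alpha$. The only difference is that you derive the inclusion $\dis(Q)\leq[\lmlt(Q),\lmlt(Q)]$ from scratch via $gL_xg^{-1}=L_{g(x)}$ and connectedness, whereas the paper cites \cite[Proposition 2.3]{hsv} for the equality $\dis(Q)=[\lmlt(Q),\lmlt(Q)]$; your self-contained derivation is valid.
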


\begin{proof}
According to \cite[Proposition 2.3]{hsv} we have $\dis(Q)=[\lmlt(Q),\lmlt(Q)]$. Assume that $H=\dis_\alpha$ is cyclic. The subgroup $H$ is normal in $\lmlt(Q)$, so we have a group action $\rho:\lmlt(Q)\longrightarrow \aut{H}$ where $\rho_x=\widehat{x}|_H$ for every $x\in \lmlt(Q)$. The group $\aut{H}$ is abelian, so $\dis(Q)=[\lmlt(Q),\lmlt(Q)]\leq \ker{\rho}$, i.e. $H$ is central in $\dis(Q)$. According to \cite[Corollary 5.4]{CP}, we see that $\alpha$ is a central congruence.
\end{proof}
Note that Lemma \ref{central 2} only uses that the automorphism group of $\dis_\alpha$ is abelian, so if this condition holds, then $\alpha$ is central.
%


%
%

\begin{lemma}\label{central}
Let $Q$ be a finite connected faithful quandle and let $\alpha\in \con(Q)$ such that $\alpha\wedge \gamma_Q=0_Q$. Then: 
\begin{itemize}
\item[(i)] $\alpha$ is central; 
\item[(ii)] $\dis_\alpha=\dis^\alpha$; 
\item[(iii)] $Q/\alpha$ is faithful.
\end{itemize}

\end{lemma}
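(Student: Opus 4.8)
The plan is to extract everything from the single equality
$\dis^\alpha\cap\dis^{\gamma_Q}=\dis^{\alpha\wedge\gamma_Q}=\dis^{0_Q}=1$,
which is immediate from the Galois identity $\dis^{\alpha\wedge\beta}=\dis^\alpha\cap\dis^\beta$ together with $\dis^{0_Q}=1$ (faithfulness) and $\dis^{\gamma_Q}=\gamma_1(\dis(Q))=[\dis(Q),\dis(Q)]$. For (i), since $\dis_\alpha\trianglelefteq\dis(Q)$ one has $[\dis_\alpha,\dis(Q)]\le\dis_\alpha$, and since $\dis_\alpha\le\dis^\alpha$ and $[\dis_\alpha,\dis(Q)]\le[\dis(Q),\dis(Q)]=\dis^{\gamma_Q}$, I obtain $[\dis_\alpha,\dis(Q)]\le\dis^\alpha\cap\dis^{\gamma_Q}=1$. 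Thus $\dis_\alpha\le Z(\dis(Q))$, and $\alpha$ is central by \cite[Corollary 5.4]{CP}. The same computation applied to $\dis^\alpha$, namely $[\dis^\alpha,\dis(Q)]\le\dis^\alpha\cap\dis^{\gamma_Q}=1$, yields the stronger fact $\dis^\alpha\le Z(\dis(Q))$; as a central subgroup of the transitive group $\dis(Q)$ it acts semiregularly, i.e. $(\dis^\alpha)_x=1$ for every $x$ (the semiregularity already invoked in Lemma \ref{center for latin}). Its subgroup $\dis_\alpha$ is then central and semiregular as well.

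I would then reduce both (ii) and (iii) to the single claim
$$(\star)\qquad\text{if } N\trianglelefteq\lmlt(Q)\text{ is central in }\dis(Q)\text{ (hence semiregular), then }\c{N}=\mathcal{O}_N.$$
Granting $(\star)$ and applying it to $N=\dis^\alpha$, the general sandwich $\mathcal{O}_{\dis^\alpha}\le\alpha\le\c{\dis^\alpha}$ collapses to $\alpha=\mathcal{O}_{\dis^\alpha}=\c{\dis^\alpha}$. The equality $\c{\dis^\alpha}=\alpha$ is exactly (iii): since $L_xL_y^{-1}\in\dis(Q)$ always, one has $L_xL_y^{-1}\in\lmlt^\alpha$ iff $L_xL_y^{-1}\in\dis^\alpha$, so $\c{\dis^\alpha}$ is precisely the pullback of the Cayley kernel of $Q/\alpha$, and $\c{\dis^\alpha}=\alpha$ says this Cayley kernel is trivial.

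For (ii), $\alpha=\mathcal{O}_{\dis^\alpha}$ together with semiregularity makes each block of $\alpha$ a regular $\dis^\alpha$-orbit, so $|[x]_\alpha|=|\dis^\alpha|$. Applying $(\star)$ to $N=\dis_\alpha$ gives likewise $\alpha=\mathcal{O}_{\dis_\alpha}=\c{\dis_\alpha}$ and $|[x]_\alpha|=|\dis_\alpha|$; since $\dis_\alpha\le\dis^\alpha$ and these finite groups have equal order, $\dis_\alpha=\dis^\alpha$.

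It remains to prove $(\star)$, and this is the step I expect to be the main obstacle, because every commutator-based lower bound on $\dis_\alpha$ (such as $[\dis^\alpha,\dis^\alpha]\le\dis_\alpha$ from \cite[Proposition 3.3(1)]{CP}) becomes vacuous once $\dis^\alpha$ is central. The inclusion $\mathcal{O}_N\le\c{N}$ is general, so I must establish $\c{N}\le\mathcal{O}_N$: given $(x,y)$ with $n:=L_xL_y^{-1}\in N$, produce $m\in N$ with $m(x)=y$. A clean first reduction uses faithfulness: for $g\in\dis(Q)_y$, centrality of $n$ forces $L_{g(x)}L_{g(y)}^{-1}=L_xL_y^{-1}$, hence $L_{g(x)}=L_x$, and faithfulness of $Q$ gives $g(x)=x$; thus $\dis(Q)_y\le\dis(Q)_x$ and, symmetrically, $\dis(Q)_x=\dis(Q)_y=:S$. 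Writing $n=[L_x,g_0]$ with $g_0\in\dis(Q)$ and $g_0(x)=y$, one gets $g_0\in N_{\dis(Q)}(S)$, and the goal becomes $g_0\in NS$, i.e. $y\in x^N$. This is where faithfulness must be used in full, playing the role that bijectivity of $1-f$ plays in the affine model $\aff(A,f)$ (where $\c{N}=\mathcal{O}_N$ holds precisely because $1-f$ is invertible); I would complete it through the homogeneous representation, identifying $N_{\dis(Q)}(S)/S$ with the centralizer of $\dis(Q)$ in $\Sym_Q$ and exploiting $\dis^\alpha\cap[\dis(Q),\dis(Q)]=1$ to eliminate the fixed cosets that would otherwise obstruct $g_0\in NS$.
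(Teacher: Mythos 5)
There is a genuine gap, and you have correctly located it yourself: everything in your argument funnels through the claim $(\star)$ that $\c{N}=\mathcal{O}_N$ for a normal subgroup $N$ of $\lmlt(Q)$ that is central in $\dis(Q)$, and $(\star)$ is never actually proved. Your final paragraph reduces it to showing $g_0\in N\,\dis(Q)_x$ for a transporter $g_0$ normalizing $S=\dis(Q)_x=\dis(Q)_y$, and then says you ``would complete it through the homogeneous representation \dots exploiting $\dis^\alpha\cap[\dis(Q),\dis(Q)]=1$ to eliminate the fixed cosets.'' That is a plan, not an argument: it is not explained why the identification of $N_{\dis(Q)}(S)/S$ with a centralizer forces $g_0$ into $NS$, and the hypothesis $N\cap\gamma_1(\dis(Q))=1$ --- which is exactly what distinguishes your situation from a general central $N$ --- enters nowhere in a checkable way. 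This missing step is precisely the content that the paper does not reprove but imports from elsewhere: the actual proof obtains $\dis^\alpha\le Z(\dis(Q))$ by the same commutator computation you use (via $[\dis(Q),\dis^\alpha]\le\gamma_1(\dis(Q))\cap\dis_\alpha\le\dis^{\gamma_Q}\cap\dis^\alpha=\dis^{0_Q}=1$), but then invokes \cite[Lemma 4.4(i)]{Nilpotent} to get the decomposition $\dis^\alpha=\dis_\alpha\times(\dis^\alpha)_x$ (whence (ii) by semiregularity, with no need for $(\star)$), and \cite[Corollary 4.5(i)]{Nilpotent} to get $\alpha=\c{\dis_\alpha}=\c{\dis^\alpha}$ (whence (iii) via \cite[Proposition 3.7]{CP}). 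Without either proving $(\star)$ or citing these results, parts (ii) and (iii) of your proof are not established.

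The parts of your proposal that do work are worth noting. Part (i) is complete and correct: $[\dis_\alpha,\dis(Q)]\le\dis_\alpha\cap[\dis(Q),\dis(Q)]\le\dis^\alpha\cap\dis^{\gamma_Q}=\dis^{\alpha\wedge\gamma_Q}=1$, so $\dis_\alpha\le Z(\dis(Q))$ and centrality of $\alpha$ follows from \cite[Corollary 5.4]{CP}; this is a legitimate alternative to the paper's one-line universal-algebraic argument $[1_Q,\alpha]\le\alpha\wedge\gamma_Q=0_Q$ from \cite[Proposition 3.4]{comm}. Your reductions of (ii) and (iii) to $(\star)$ are also sound (the collapse of the sandwich $\mathcal{O}_{\dis_\alpha}\le\alpha\le\c{\dis^\alpha}$, the counting argument $|\dis_\alpha|=|[x]_\alpha|=|\dis^\alpha|$, and the observation that $\c{\dis^\alpha}$ is the pullback of $\lambda_{Q/\alpha}$). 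To repair the proof you should either cite \cite[Lemma 4.4(i)]{Nilpotent} and \cite[Corollary 4.5(i)]{Nilpotent} as the paper does, or supply a genuine proof of $(\star)$ for subgroups $N$ satisfying the extra condition $N\cap\gamma_1(\dis(Q))=1$; as stated for arbitrary central $N$, $(\star)$ is not something you should expect to hold without that condition.
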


\begin{proof}
(i) Let $A$ be an algebra, and $\alpha$ be a congruence. Then $[1_A,\alpha]\leq \alpha\wedge \gamma_A=0_A$ \cite[Proposition 3.4]{comm}. Therefore, $\alpha$ is central.

(ii) According to \cite[Proposition 3.3(i)]{CP} we have
$$[\dis(Q),\dis^\alpha]\leq \gamma_1(\dis(Q))\cap \dis_\alpha=\dis^{\gamma_Q}\cap \dis_\alpha\leq \dis^{\gamma_Q}\cap \dis^\alpha=\dis^{\gamma_Q\wedge \alpha}=\dis^{0_Q}=1.$$
Then $\dis^\alpha\leq Z(\dis(Q))$. According to \cite[Lemma 4.4(i)]{Nilpotent}, $\dis^\alpha=\dis_\alpha\times \left(\dis^\alpha\right)_x$ and since $\dis^\alpha$ is a central subgroup of the transitive group $\dis(Q)$ then $\left(\dis^\alpha\right)_x=1$. Consequently $\dis_\alpha=\dis^\alpha$. 

(iii) Using \cite[Corollary 4.5(i)]{Nilpotent}, we have $\alpha=\c{\dis_\alpha}=\c{\dis^\alpha}$. Then we can apply \cite[Proposition 3.7]{CP} and we find that $Q/\alpha$ is faithful.
\end{proof}

\section{latin quandles of size \texorpdfstring{$16p$}{16p}} \label{sec:latin16p}


\subsection{Locally strictly simple quandles}
A quandle is {\it strictly simple} if it does not have proper subquandles. Finite strictly simple quandles coincide with simple latin quandles \cite{Principal}. Latin quandles are solvable (see \cite{CP}, Corollary 1.3), thus finite strictly simple quandles are abelian and hence affine. Recall that finite simple abelian quandles are given by affine quandles of the form $Q=\Aff(\Z_p^n,f)$ where $\Z_p^n$ does not have $f$-invariant subgroups (see \cite{Principal} and \cite{Medial} for more details on abelian quandles). In particular, $\aut{Q}\cong \Z_p^n\rtimes \Z_{p^n-1}$, $\aut{[x]}_x= C_{GL_n(p)}(f)\cong \Z_{p^n-1}$ and $\aut{Q}/\dis(Q)$ is abelian \cite[Proposition 3.8]{Principal}. 

The following result has already been implicitly stated in the proof of \cite[Theorem 3.14]{LSS} for which we include a cleaner proof divided into two parts. 
\begin{lemma}\label{aut of simple}
    Let $Q=\aff(\Z_p^n, f)$ be a simple quandle, and $g\in \aut{Q}$. If $|g|$ and $p$ are relatively prime, then $g\in \aut{Q}_x$ for some $x\in Q$. 
\end{lemma}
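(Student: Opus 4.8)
The plan is to read off the conclusion from the explicit structure of $\aut{Q}$ recorded just above the statement, reducing everything to producing a fixed point for $g$. Recall that $\aut{Q}\cong\Z_p^n\rtimes\Z_{p^n-1}$, where the normal factor $T\cong\Z_p^n$ is the group of translations; since $|T|=p^n$ and $[\aut{Q}:T]=p^n-1$ is coprime to $p$, $T$ is the (normal) Sylow $p$-subgroup of $\aut{Q}$, and $\aut{Q}_0=C_{GL_n(p)}(f)\cong\Z_{p^n-1}$. Because $T$ acts regularly on $Q$, every point stabilizer is a $T$-conjugate of $\aut{Q}_0$: if $t\in T$ satisfies $t(0)=x$ then $\aut{Q}_x=t\,\aut{Q}_0\,t^{-1}$. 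Hence $g\in\aut{Q}_x$ for some $x$ if and only if $g$ fixes some point of $Q$, and the statement becomes: a $p'$-element of $\aut{Q}$ has a fixed point.

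To make this concrete I would write $g$ in affine form $g(v)=Av+b$ with $b\in\Z_p^n$ and linear part $A\in C_{GL_n(p)}(f)=\aut{Q}_0$, using that the automorphisms of $\aff(\Z_p^n,f)$ are exactly the translations composed with the linear automorphisms centralizing $f$. The argument then splits into two cases. If $A=\mathrm{id}$, then $g$ is the translation $v\mapsto v+b$, so $g^{k}(v)=v+kb$ and $|g|\in\{1,p\}$; the hypothesis $\gcd(|g|,p)=1$ forces $b=0$ and hence $g=\mathrm{id}$, which fixes every point. This is the single point at which coprimality is used, and it is precisely what excludes the fixed-point-free nontrivial translations.

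If $A\neq\mathrm{id}$, I would invoke simplicity of $Q$: then $\Z_p^n$ has no $f$-invariant subgroup, so $f$ acts irreducibly and $C_{GL_n(p)}(f)\cong\F_{p^n}^{*}$ is the group of scalar multiplications of the field $\F_{p^n}\cong\Z_p^n$. Thus $A$ is multiplication by some $\alpha\neq 1$, so $A-\mathrm{id}$ is multiplication by the nonzero field element $\alpha-1$ and is invertible; the fixed-point equation $(A-\mathrm{id})v=-b$ has the solution $v_0=-(A-\mathrm{id})^{-1}b$, giving $g\in\aut{Q}_{v_0}$. Combining the two cases yields the claim.

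The only genuine obstacle is the invertibility of $A-\mathrm{id}$ in the second case, which rests on the field structure forced by simplicity. Alternatively, one may avoid the explicit computation and argue purely group-theoretically: $T$ is a normal abelian Hall $p$-subgroup of $\aut{Q}$, so the cohomological form of Schur--Zassenhaus ($H^1(\langle g\rangle,T)=0$ since $\gcd(|\langle g\rangle|,|T|)=1$) shows that the $p'$-subgroup $\langle g\rangle$ is conjugate into the complement $\aut{Q}_0$, i.e. into a point stabilizer.
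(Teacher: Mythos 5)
Your argument is correct and essentially matches the paper's proof: both write $g$ in affine form $g(v)=h(v)+t$ with $h\in C_{GL_n(p)}(f)$, reduce the claim to producing a fixed point, and obtain one by inverting $1-h$. The only real difference is how invertibility of $1-h$ is justified: the paper observes directly that $\mathrm{Fix}(h)$ is an $f$-invariant subgroup (since $h$ commutes with $f$) and hence trivial by simplicity, whereas you invoke the stronger fact that $C_{GL_n(p)}(f)\cong\F_{p^n}^{*}$ acts by field scalars; both are valid (the paper records this structure just above the lemma), and your Schur--Zassenhaus aside is a correct, if heavier, alternative.
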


\begin{proof}
   If $g\in \aut{[x]}$ has order relatively prime with $p$ then $g(y)=t+h(y)$ for $1\neq h\in C_{GL_n(p)}(f)$ and some $t\in \Z_p^n$ for every $y\in Q$. Assume that $0\neq y\in Fix(h)$. Then $f(y)=fh(y)=h(f(y))$, so $Fix(h)$ is an $f$-invariant subgroup. 
   Therefore, $Fix(h)=0$ and so $1-h$ is invertible. Hence, there exists $z\in Q$ such that $(1-h)(z)=t$, i.e. $g(z)=z$.
\end{proof}

\begin{lemma}\label{N cyclic}
    Let $Q$ be a finite quandle, $\alpha\in \con(Q)$ and $N\leq \dis^\alpha$. If $[x]_\alpha$ is a simple affine quandle of order relatively prime with $|N|$ then $N$ embeds into $\aut{[x]}_x$ for some $x\in Q$. In particular $N$ is cyclic.
\end{lemma}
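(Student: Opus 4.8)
The plan is to cut the action of $N$ down to a single block, where the ambient automorphism group is almost cyclic, and to read off both the embedding and the cyclicity from there.

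First I would set up the restriction. Since $N\le\dis^\alpha$, every $h\in N$ preserves each $\alpha$-class setwise and hence restricts to an automorphism $h|_{[a]}\in\aut{[a]}$ of each block $[a]=[a]_\alpha$. Writing $\rho_a\colon N\to\aut{[a]}$ for this restriction, Corollary~\ref{embedding as quandle}(i) tells me that the combined map $h\mapsto(h|_{[a_i]})_{i}$ is injective, so at least the product of all the $\rho_a$ is faithful. The hypothesis that $|N|$ is coprime to $|[x]|=p^n$ means that $N$, and therefore each image $\rho_a(N)$, is a $p'$-group.

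Next I would locate the point stabilizer. As recalled at the start of this subsection, a simple affine block satisfies $\aut{[a]}\cong\Z_p^n\rtimes\Z_{p^n-1}$, with the translation group $\Z_p^n$ a normal Sylow $p$-subgroup and each point stabilizer $\aut{[a]}_y\cong\Z_{p^n-1}$ a complement to it. Being a $p'$-group, $\rho_a(N)$ meets $\Z_p^n$ trivially; by Schur--Zassenhaus the complements are mutually conjugate and are exactly the point stabilizers, so $\rho_a(N)$ is conjugate into some $\aut{[a]}_{y_a}$. Equivalently, by Lemma~\ref{aut of simple} the $p'$-group $\rho_a(N)$ has a common fixed point $y_a\in[a]$. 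In particular each $\rho_a(N)$ is cyclic and $N$ fixes a point in every block.

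The hard part will be to promote this faithful \emph{product}-embedding to a faithful restriction to a \emph{single} block: I need a block $[x]$ with $\rho_x$ injective on $N$, i.e.\ $N\cap\ker{\rho_x}=1$, for then $N\cong\rho_x(N)\le\aut{[x]}_{y_x}\cong\Z_{p^n-1}$ is cyclic and the point $x:=y_x$ is the one asked for. This is exactly where the coprimality must be used a second time, and the mechanism I would aim for is that $\ker{\rho_x}$ contains no nontrivial $p'$-element, so that the $p'$-group $N$ automatically meets it trivially. Concretely, if $1\ne h\in N$ fixed $[x]$ pointwise, then on every other block $\rho_b(h)$ would be a $p'$-automorphism of a simple affine quandle, hence (Lemma~\ref{aut of simple}) fixing a point and, if nontrivial, exactly one point, since its fixed set is an $f$-invariant subgroup and the block has none other than $0$. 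The genuinely delicate step---the one I expect to cost the most---is then to use connectedness of $Q$ to propagate ``$h$ fixes one block pointwise'' to ``$h$ fixes $Q$ pointwise'', contradicting $h\ne1$; the freedom to choose the block (``for some $x$'') is what I would exploit to make this promotion go through.
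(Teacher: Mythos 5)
Your architecture matches the paper's: restrict $N$ to the blocks, use coprimality to produce fixed points, and conclude by landing $N$ inside a cyclic point stabilizer $\aut{[x]}_x$. One step of yours is even slightly slicker than the paper's: you get a \emph{common} fixed point of $\rho_a(N)$ in each block in one stroke via Schur--Zassenhaus (every $p'$-subgroup of $\Z_p^n\rtimes\Z_{p^n-1}$ is conjugate into the complement, and the complements are exactly the point stabilizers), whereas the paper first proves $\phi(N)$ is abelian (from $[\aut{[x]},\aut{[x]}]\leq\dis([x])$ and coprimality of $|N|$ with $|\dis([x])|=|[x]|$) and then argues that commuting elements share the fixed point $x$, using that two distinct points of a strictly simple block generate the whole block.

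However, the step you explicitly defer --- promoting ``$h\in N$ fixes $[x]$ pointwise and fixes a point in every other block'' to ``$h=1$'' --- is precisely the content of the proof, and your proposal does not supply it; ``use connectedness to propagate'' is not yet an argument, and the fixed-point set of $h$ being a subquandle containing $[x]$ and a transversal does not by itself force it to be all of $Q$. The missing ingredient is the generation fact the paper imports from \cite[Lemma 6.1]{GB}: for a connected quandle $Q$ and $\alpha\in\con(Q)$, the set $X\cup[x]_\alpha$ generates $Q$ for \emph{any} transversal $X$ of the blocks. With that lemma, any $h\in\ker{\rho_x}\cap N$ fixes the generating set $X\cup[x]_\alpha$ pointwise (taking $X$ to consist of fixed points of $h$, which exist in each block by Lemma \ref{aut of simple} and coprimality), hence $h=1$; this gives injectivity of $\rho_x$ for \emph{every} block, so no choice of block is actually needed --- the ``for some $x$'' in the statement refers to the choice of the common fixed point inside the block, not of the block itself. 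Without that generation lemma (or a proof of it), your argument does not close. Note also that both your sketch and the paper's proof tacitly use connectedness of $Q$, which is not in the hypotheses as stated but holds in all applications.
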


\begin{proof}
 Let $g\in N$. Then $g|_{[x]}\in\aut{[x]}$ has order relatively prime with $|[x]|$ and so for every $[x]\in Q/\alpha$ there exists $c\in [x]$ such that $g(c)=c$ (see Lemma \ref{aut of simple}). Let $X$ be any set of representatives of the blocks of $\alpha$. Then $Q$ is generated by $X\cup [x]_\alpha$ for every $x\in Q$ \cite[Lemma 6.1]{GB}.
%
 Let $\phi:g\mapsto g|_{[x]}$ map $N$ to $\aut{[x]}$. If $\phi(g)=1$ then $g$ fixes all the elements of block $[x]$ and an element for every other block of $\alpha$. Then $g$ fixes a set of generators of $Q$ and so $g=1$. Hence $N$ embeds into $\aut{[x]}$.

Since $\aut{[x]}/\dis([x])$ is abelian, then $[\aut{[x]},\aut{[x]}]\leq \dis([x])$ and so is also $[\phi(N),\phi(N)]=\phi([N,N])\leq \dis([x])$. The order of $N$ and of $\dis([x])$ are relatively prime (indeed $|\dis([x])|=|[x]|$), so $[\phi(N),\phi(N)]=1$ and thus $N$ is abelian. So, if $g,g'\in N$ and $g(x)=x$ then $g'(x)=g'(g(x))=g(g'(x))$. So $g$ fixes $S=Sg(x,g'(x))$. If $g'(x)\neq x$ then $S=[x]$ and $g=1$ accordingly. Therefore, $g'(x)=x$, so we have that $N$ embeds in $\aut{[x]}_x$ which is cyclic. So $N$ is also cyclic. 
\end{proof}

We now consider the class of {\it locally strictly simple} (LSS) quandles, that is, the class of quandles such that all proper subquandles are strictly simple. In \cite{LSS} we proved that the non-affine subdirectly irreducible LSS quandles have a unique proper congruence with strictly simple factor and strictly simple blocks (see \cite{LSS}). Let $\sigma_Q$ be the equivalence relation defined by setting
$$x\, \sigma_Q\, y \text{ if and only if }\dis(Q)_x=\dis(Q)_y.$$ 
We denote by $\LSS(p^m,q^n,\alpha)$ the class of subdirectly irreducible non-affine LSS quandles such that the unique factor has size $q^n$, the blocks of the unique congruence have size $p^m$ and $\sigma_Q=\alpha$.

We can use \cite[Proposition 5.7]{Principal} (also mentioned in \cite[Proposition. 3.11]{LSS}) to describe the structure of the displacement group of quandles with the congruence lattice given by the three-element chain that includes all LSS latin quandles. 

\begin{lemma}\label{dis for 3-chains}
    Let $Q$ be a finite connected faithful solvable quandle with congruence lattice given by a chain of length $3$. If $Q$ is not nilpotent then one of the following holds: 
    \begin{itemize}
        \item[(i)] $\gamma_1(\dis(Q))=\gamma_2(\dis(Q))$ and $\dis(Q)=\Z_p^k\rtimes_\rho \Z_q^m$ where $p,q$ are primes and $k\in \mathbb{N}$.  
        \item[(ii)] $\gamma_1(\dis(Q))\neq\gamma_2(\dis(Q))$ and $\dis(Q)=\Z_p^k\rtimes_\rho K$ where $p$ is a prime, $k\in \mathbb{N}$ and $K$ is a special $q$-group. 
    \end{itemize}
In both cases, $Z(\dis(Q))=1$ and $\rho$ is faithful.
\end{lemma}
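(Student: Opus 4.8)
The plan is to transfer everything to the group $G=\dis(Q)$ and read off the dichotomy from its lower central series. First I would isolate the unique proper congruence. Since $\con(Q)$ is a chain of length $3$ there is exactly one proper nontrivial congruence $\alpha$, which is simultaneously minimal and maximal (Lemma \ref{cong of directly irred}(ii)). As $\alpha$ is maximal, $Q/\alpha$ is simple, and being a quotient of a solvable quandle it is abelian, hence a simple affine quandle $\aff(\Z_q^m,\bar f)$ with $\bar f$ admitting no invariant subgroup; in particular $\dis(Q/\alpha)\cong\Z_q^m$. Because $Q$ is not nilpotent it is non-abelian, so $\gamma_Q\neq 0_Q$, which forces $\gamma_Q=\alpha$. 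Thus $\alpha=\gamma_Q$ is minimal and, by Lemma \ref{minimal cong}(iii), abelian.

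Next I would extract three facts about $G$. By Lemma \ref{minimal cong}(ii) the group $\dis_{\gamma_Q}$ is an elementary abelian $p$-group $\Z_p^k$; by Lemma \ref{dis_gamma}(i) it equals $\gamma_2(G)$; and since $\ma_Q=\{\gamma_Q\}$, Lemma \ref{dis_gamma}(ii) gives $Z(G)=1$. Writing $N=\gamma_2(G)=\Z_p^k$ and using $\dis^{\gamma_Q}=\gamma_1(G)$ together with $G/\dis^{\gamma_Q}\cong\dis(Q/\gamma_Q)$, I obtain $G/\gamma_1(G)\cong\Z_q^m$. Now $G/N=G/\gamma_2(G)$ has central, hence abelian, commutator subgroup $\gamma_1(G)/\gamma_2(G)$, so it is nilpotent; as its abelianization $G/\gamma_1(G)$ is a $q$-group, every Sylow subgroup for a prime other than $q$ is trivial and $G/N$ is a $q$-group. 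Since $G$ is non-nilpotent while $N$ is a $p$-group we must have $p\neq q$, so $N$ is a normal Hall $p$-subgroup and Schur–Zassenhaus yields $G=\Z_p^k\rtimes_\rho K$ with $K\cong G/N$ a $q$-group.

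The two cases correspond to whether $\gamma_1(G)=\gamma_2(G)$. If $\gamma_1(G)=\gamma_2(G)=N$ then $K\cong G/[G,G]\cong\Z_q^m$, which is case (i). If $\gamma_1(G)\neq\gamma_2(G)$ then $[K,K]\cong\gamma_1(G)/\gamma_2(G)\neq 1$, so $K$ is a nonabelian $q$-group of class $2$, and I must show it is \emph{special}. That $K^{ab}=G/\gamma_1(G)$ is elementary abelian gives $K^q\le[K,K]$ and hence $\Phi(K)=[K,K]$; the class-$2$ identity $[x,y]^q=[x^q,y]=1$ (valid since $x^q\in\Phi(K)\le Z(K)$) makes $[K,K]$ elementary abelian; and $[K,K]\le Z(K)$ by class $2$. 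For the reverse inclusion $Z(K)\le[K,K]$ I would use the Galois correspondence: for $z\in Z(K)$ a direct conjugation check shows $B=N\langle z\rangle\trianglelefteq G$, so $\c{B}$ is a congruence with $\gamma_Q\le\c{N}\le\c{B}$, whence $\c{B}\in\{\gamma_Q,1_Q\}$; the value $1_Q$ would give $G=\dis_{1_Q}\le B$, forcing $K\cong B/N$ cyclic and abelian against case (ii), so $\c{B}=\gamma_Q$ and therefore $z\in B\le\dis^{\gamma_Q}=\gamma_1(G)$, i.e. $z\in\gamma_1(G)\cap K=[K,K]$. This yields $Z(K)=[K,K]=\Phi(K)$ elementary abelian, so $K$ is special.

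Finally, faithfulness of $\rho$ follows in both cases from $Z(G)=1$: computing the center of $N\rtimes_\rho K$ shows $\ker\rho\cap Z(K)=1$, and since $\ker\rho=C_K(N)$ is a normal subgroup of the $q$-group $K$, were it nontrivial it would meet $Z(K)$ nontrivially; hence $\ker\rho=1$. The main obstacle is precisely the inclusion $Z(K)\le[K,K]$: class $2$ together with elementary abelian abelianization only guarantee a class-$2$ $q$-group with elementary abelian commutator, and it is the rigidity of the $3$-chain congruence lattice, exploited through the Galois connection, that upgrades this to the \emph{special} condition $Z(K)=[K,K]$.
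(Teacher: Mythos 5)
Your route is genuinely different from the paper's. The paper does not prove the group-theoretic dichotomy from scratch: it checks that $\gamma_1(\dis(Q))$ is a \emph{maximal characteristic} subgroup of $\dis(Q)$ (because any characteristic $H$ with $\gamma_1(G)\leq H\neq G$ would give an $f_{\gamma_1(G)}$-invariant subgroup of $G/\gamma_1(G)\cong \dis(Q/\gamma_Q)$, which has none), verifies $Z(G)=1$ via Lemma \ref{dis_gamma}, and then invokes \cite[Proposition 5.7]{Principal} to get the two cases, finishing with Lemma \ref{no center} for faithfulness of $\rho$. You instead reprove the structure directly: Schur--Zassenhaus for the splitting, the class-$2$ identities for $\Phi(K)=[K,K]$ elementary abelian, and the Galois connection for $Z(K)\leq[K,K]$. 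This buys self-containedness; the paper's version buys brevity and reuses a result it needs anyway elsewhere.

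There is, however, one genuine gap in your key step $Z(K)\leq[K,K]$. You apply the operator $\c{\ast}$ to $B=N\langle z\rangle$, but $\c{B}$ is only guaranteed to be a congruence when $B$ is normal in $\lmlt(Q)$, not merely in $\dis(Q)$: the compatibility of $\c{B}$ with $*$ uses conjugation by arbitrary left translations, and $\lmlt(Q)=\dis(Q)\langle L_x\rangle$ is in general strictly larger than $\dis(Q)$. Your conjugation check only establishes $B\trianglelefteq G=\dis(Q)$; the automorphism $\widehat{L_x}$ of $G$ fixes $N=\gamma_2(G)$ but need not preserve $\langle z\rangle$ modulo $N$ when $Z(K)$ is not cyclic, so $B$ need not be $\lmlt(Q)$-normal. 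The repair is easy: replace $B$ by $NZ(K)$. Since $NZ(K)/N=Z(G/N)$ is characteristic in $G/N$ and $N=\gamma_2(G)$ is characteristic in $G$, the subgroup $NZ(K)$ is characteristic in $G$ and hence normal in $\lmlt(Q)$; your argument then runs unchanged ($\c{NZ(K)}=1_Q$ would force $G\leq NZ(K)$, i.e.\ $K$ abelian, contradicting case (ii), so $NZ(K)\leq\dis^{\gamma_Q}=\gamma_1(G)$ and $Z(K)\leq\gamma_1(G)\cap K=[K,K]$). With that substitution the proof is correct.
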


\begin{proof}
Let $G=\dis(Q)$. Since $Q$ is solvable but not nilpotent, then the unique non-trivial congruence of $Q$ is $\gamma_Q$, it is abelian and the blocks have size a power of a prime (see Lemma \ref{minimal cong}). According to Lemma \ref{dis_gamma}(ii) we have $\dis_{\gamma_Q}=\gamma_2(G)$ and it is an elementary abelian $p$-group according to Lemma \ref{minimal cong}(ii). The quandle $Q/\gamma_Q$ is simple and abelian, therefore $\dis(Q/\gamma_Q)=G/\gamma_1(G)$ is elementary abelian. So $Q/\gamma_Q\cong \aff(\Z_q^n,f)$ where $f$ has no invariant subgroups. The quandle $Q$ is not nilpotent, so $p\neq q$ (connected quandles of size a power of a prime are nilpotent \cite[Theorem 1.4]{CP}).

Let $\gamma_1(G)\leq H\neq G$ where $H$ is a characteristic subgroup of $G$. Then $H/\gamma_1(G)$ is an $f_{\gamma_1(G)}$-invariant subgroup of $G/\gamma_1(G)$, so $H=\gamma_1(G)$ and $\gamma_1(G)$ is a maximal characteristic subgroup of $G$. Thus, the group $G$ satisfies the hypothesis of \cite[Proposition 5.7]{Principal} and the statement follows. 

According to Lemma \ref{dis_gamma}(i), $\dis(Q)$ has trivial center, and therefore $\rho$ is faithful according to Lemma \ref{no center}. 
\end{proof}

For the scope of this paper, we need to investigate the classes $\LSS(p,16,\sigma_Q)$ and $\LSS(16,p,\sigma_Q)$ as they are classes of quandles of size $16p$.

\begin{proposition}\label{no LSS}
    The class $\LSS(p,16,\alpha)$ is empty for every $\alpha$.
\end{proposition}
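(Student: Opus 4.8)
The plan is to reduce the statement to a purely group-theoretic impossibility about the displacement group and then exhibit a clash between two constraints forced on a $2$-group complement.

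First I would record the structure imposed on $Q$. A member of $\LSS(p,16,\alpha)$ is a subdirectly irreducible non-affine LSS latin quandle whose unique proper congruence $\gamma_Q$ has blocks of size $p$ and factor $Q/\gamma_Q$ of size $16$; thus $\con(Q)$ is the three-element chain $\{0_Q,\gamma_Q,1_Q\}$ and $Q$ is connected, faithful and solvable. Such a $Q$ is non-nilpotent: being subdirectly irreducible it is directly indecomposable, while a finite nilpotent connected quandle is a direct product of connected quandles of prime-power order, so a directly indecomposable one has prime-power order, and $16p$ is not a prime power. Hence Lemma \ref{dis for 3-chains} applies and yields $\dis(Q)=\Z_p^k\rtimes_\rho K$ with $\rho$ faithful, where $K$ is a $2$-group ($K=\Z_2^m$ in case (i), $K$ a special $2$-group in case (ii)). Moreover $Q/\gamma_Q$ is the simple affine quandle $\aff(\Z_2^4,f)$, so $\dis(Q/\gamma_Q)=\Z_2^4$.

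Then I would extract two numerical constraints on $K$. Since $Q/\gamma_Q$ is strictly simple of size $16>2$, any two distinct elements generate it, so $Q/\gamma_Q=Sg([a_1],[a_2])$; each block $[a_i]$ is a simple affine quandle $\aff(\Z_p,h_i)$ with $\aut{[a_i]}\cong\Z_p\rtimes\Z_{p-1}$, whose Sylow $p$-subgroup has order $p$. Applying Corollary \ref{embedding as quandle}(i) to $\dis^{\gamma_Q}$ embeds it into $\aut{[a_1]}\times\aut{[a_2]}$; as $\Z_p^k$ is the Sylow $p$-subgroup of $\dis(Q)$ and lies in $\dis^{\gamma_Q}=\gamma_1(\dis(Q))$ (the quotient $\dis(Q)/\gamma_1(\dis(Q))=\Z_2^4$ being a $2$-group), this forces $p^k\mid p^2$, i.e. $k\le 2$. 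For the second constraint I use $\dis(Q)/\gamma_1(\dis(Q))\cong\dis(Q/\gamma_Q)=\Z_2^4$: writing $\dis(Q)^{ab}\cong\bigl(\Z_p^k/[\Z_p^k,K]\bigr)\times K^{ab}$ and noting the left factor is a $p$-group inside the $2$-group $\Z_2^4$, it must be trivial, so $[\Z_p^k,K]=\Z_p^k$ and $K^{ab}\cong\Z_2^4$. Hence $K$ needs exactly four generators, $d(K)=4$.

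Finally I would derive the contradiction from the faithful action $\rho\colon K\hookrightarrow\aut{\Z_p^k}=\mathrm{GL}_k(p)$ with $k\le2$. If $k=1$ then $\mathrm{GL}_1(p)=\Z_{p-1}$ is cyclic, so $d(K)\le1$. If $k=2$, reduce modulo scalars: the kernel $K\cap Z(\mathrm{GL}_2(p))$ is cyclic, while the image $\bar K$ in $\mathrm{PGL}_2(p)$ is a $2$-subgroup of a dihedral Sylow $2$-subgroup, hence cyclic or dihedral and so $2$-generated; since $K$ is a central extension of $\bar K$ by a cyclic group, $d(K)\le d(\bar K)+1\le3$. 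In either case $d(K)\le3<4$, contradicting $d(K)=4$, so no such $Q$ exists. I expect the main obstacle to be pinning down both constraints at once—establishing $K^{ab}\cong\Z_2^4$ (hence $d(K)=4$) together with the bound $d(K)\le3$ for $2$-subgroups of $\mathrm{GL}_2(p)$—the latter resting on the fact that the Sylow $2$-subgroups of $\mathrm{PGL}_2(p)$ are dihedral, which is the key group-theoretic input.
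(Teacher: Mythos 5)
Your proof is correct, but it reaches the contradiction by a genuinely different route from the paper. The paper outsources the structural analysis to the earlier LSS paper: it quotes that $\dis(Q)=\Z_p^2\rtimes_\rho K$ with $K$ an extraspecial $2$-group of order $32$, $\rho$ faithful and $Z(\gamma_1(\dis(Q)))=1$, and then eliminates the two candidates {\tt SmallGroup(32,49)} and {\tt SmallGroup(32,50)} one at a time --- the first because it contains $\Z_2^3$, which cannot act faithfully on $\Z_p^2$ (Corollary \ref{ro faithful on Z_p}), the second via the explicit matrix computation of Proposition \ref{no rep}. You instead stay inside this paper's toolkit: Lemma \ref{dis for 3-chains} gives $\dis(Q)=\Z_p^k\rtimes_\rho K$ with $\rho$ faithful and $K$ a $2$-group; the embedding of $\dis^{\gamma_Q}$ into $\aut{[a_1]}\times\aut{[a_2]}$ (Corollary \ref{embedding as quandle}, using that strict simplicity of $Q/\gamma_Q$ gives $2$-generation) yields $k\le 2$; the isomorphism $\dis(Q)/\gamma_1(\dis(Q))\cong\Z_2^4$ forces $K/\gamma_1(K)\cong\Z_2^4$ and hence $d(K)=4$; and the fact that $2$-subgroups of $PGL_2(p)$ are cyclic or dihedral caps $d(K)$ at $3$. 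This treats both cases of Lemma \ref{dis for 3-chains} uniformly and never identifies $K$ up to isomorphism; the price is the classical input on Sylow $2$-subgroups of $PGL_2(p)$ (Dickson), where the paper's price is the ad hoc computation in Proposition \ref{no rep}. Your intermediate steps check out: $\Z_p^k\le\gamma_1(\dis(Q))$ because the quotient is a $2$-group, and the central-extension bound $d(K)\le d(\bar{K})+1$ with $\bar{K}\le PGL_2(p)$ and cyclic kernel is valid.

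One small inaccuracy: you call the members of $\LSS(p,16,\alpha)$ latin, but the class as defined consists of subdirectly irreducible non-affine LSS quandles, and the paper elsewhere uses such classes for quandles that are \emph{not} latin (see the case $\LSS(4,p,\sigma_Q)$ with $p=7$ in the proof of Proposition \ref{Sub red}). You only invoke latin-ness to get solvability, and that is recoverable without it: $\dis^{\gamma_Q}$ embeds into $\aut{[a_1]}\times\aut{[a_2]}\cong(\Z_p\rtimes\Z_{p-1})^2$, which is metabelian, while $\dis(Q)/\dis^{\gamma_Q}\cong\Z_2^4$, so $\dis(Q)$ --- and hence $Q$ --- is solvable. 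With that substitution the argument is complete.
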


\begin{proof}
  Let $Q\in \LSS(p,16,\alpha)$ and $G=\dis(Q)$.  According to the results of Section 3 of \cite{LSS} and following the argument of the proof of Proposition 3.21 of \cite{LSS} it follows that $G=\Z_p^2\rtimes_\rho K$ where $K$ is an extraspecial $2$-group of size $32$, $\rho$ is faithful, $\gamma_1(G)= \Z_p^2\rtimes_\rho Z(K)$ and $Z(\gamma_1(G))=1$. There are only $2$ extraspecial $2$-groups of size $32$, i.e. $K_i=$ \texttt{SmallGroup(32,i)} for $i=49,50$. If $K=K_{49}$, then it has a subgroup isomorphic to $\Z_2^3$. Thus, $\rho$ is not faithful according to Corollary \ref{ro faithful on Z_p}. If $K=K_{50}$, then $\gamma_1(G)$ has non-trivial center, according to Proposition \ref{no rep}. So, there are no quandles in the class $\LSS(p,16,\sigma_Q)$.
\end{proof}


\subsection{Factors of latin quandles of size \texorpdfstring{$16p$}{16p}}

The size of factors of latin quandles of size $16p$ is limited according to the known results on latin quandles of size dividing $16p$. In this subsection, we identify the size of the factors of latin quandles of size $16p$ and we also recall the known results about such quandles.

\begin{lemma}\label{meet of max is min}
Let $Q$ be a latin quandle of size $16p$. Then:

\begin{itemize}

\item[(i)] $|Q/\alpha|\in \{1,4,16,p,4p,16p\}$ for every $\alpha	\in \con(Q)$.	

\item[(ii)] $Q$ is congruence slim.


 
 
\end{itemize}

\end{lemma}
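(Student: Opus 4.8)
The plan is to reduce both parts to two elementary non-existence facts: there is no latin quandle of size $2$, and there is no latin quandle of size $2p$. First I would record the two structural features of finite latin quandles that make the reduction work. Since $Q$ is connected, $\lmlt(Q)\le\aut Q$ acts transitively and permutes the blocks of any $\alpha\in\con(Q)$, so all blocks of $\alpha$ share a common size $k$ and hence $|Q/\alpha|=16p/k$ divides $16p$. Moreover both $Q/\alpha$ and each block $[x]_\alpha$ are again latin: a block is a subquandle, so each right translation $R_b$ with $b\in[x]_\alpha$ maps the block into itself; being injective on $Q$ and the block finite, $R_b$ is bijective on it. Likewise $Q/\alpha$ is a finite quotient quandle on which each $R_{[x]}$ is surjective, hence bijective. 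Thus every factor and every block appearing is a latin quandle whose order divides $16p$.

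Next I would establish the two facts. For size $2$: idempotence forces each $L_x$ to fix $x$, so $L_x=\mathrm{id}$ for both elements and $\lmlt(Q')=1$ is not transitive, contradicting connectedness. For size $2p$: such a $Q'$ is finite, faithful and solvable (latin quandles are solvable), and is not simple because $2p$ is not a prime power, while finite simple latin quandles are affine of prime-power order. Hence $\ma_{Q'}\neq\emptyset$; pick $\alpha\in\ma_{Q'}$. By Lemma \ref{minimal cong}(iii) $\alpha$ is abelian, and by Lemma \ref{minimal cong}(ii) the blocks of $\alpha$ have size a power of a prime $q$; as this is a nontrivial divisor of $2p$ it must be $2$ or $p$. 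If it is $2$, a block is a latin quandle of size $2$; if it is $p$, then $Q'/\alpha$ is a latin quandle of size $2$. Either way we contradict the size-$2$ fact, so no latin quandle of size $2p$ exists.

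With these in hand, part (i) is a finite check. The divisors of $16p$ are $1,2,4,8,16,p,2p,4p,8p,16p$. Using that $Q/\alpha$ and its blocks are latin of orders dividing $16p$, I rule out the four forbidden values: $|Q/\alpha|=2$ and $|Q/\alpha|=2p$ are impossible because $Q/\alpha$ would be latin of size $2$ or $2p$; $|Q/\alpha|=8$ forces blocks of size $2p$, and $|Q/\alpha|=8p$ forces blocks of size $2$, again impossible. This leaves exactly $|Q/\alpha|\in\{1,4,16,p,4p,16p\}$.

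Finally, for part (ii) I would show that no chain $0_Q<\beta<\gamma<\delta<1_Q$ of length five can occur, which immediately gives that every proper nontrivial congruence is maximal or minimal, i.e. $\con(Q)=\Ma_Q\cup\ma_Q\cup\{0_Q,1_Q\}$. For congruences $\rho_1<\rho_2$ the quandle $Q/\rho_2$ is a quotient of $Q/\rho_1$, so $|Q/\rho_2|$ is a proper divisor of $|Q/\rho_1|$; a length-five chain would thus yield three distinct values $|Q/\delta|\mid|Q/\gamma|\mid|Q/\beta|$ in $\{4,16,p,4p\}$ (the proper nontrivial factor sizes from part (i)). But since $p$ is an odd prime the only divisibility relations in $\{4,16,p,4p\}$ are $4\mid16$, $4\mid4p$ and $p\mid4p$ (as $16\nmid4p$, $p\nmid16$, etc.), and these contain no chain of three distinct elements, the required contradiction. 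The main obstacle is the non-existence of latin quandles of size $2p$: one cannot simply argue ``no latin quandle has size $8$'', since such quandles exist (e.g.\ the simple affine quandle $\aff(\Z_2^3,f)$ with $f$ of order $7$), so the value $8$ must be excluded through its block size $2p$; the care needed is therefore in the reduction keeping blocks and quotients latin and in funnelling every forbidden case back to the single elementary fact that there is no latin quandle of size $2$.
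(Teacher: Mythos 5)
Your proof is correct, and while it follows the same overall skeleton as the paper's --- the divisors of $16p$ that get excluded are exactly those forcing a factor or a block of size $2$ or $2p$, and a height bound of $4$ then gives congruence slimness --- it differs in how the two key ingredients are justified. For the non-existence facts, the paper simply cites the literature (no connected quandles of order $2p$ for $p>5$ by McCarron, and the RIG database for the remaining small sizes), whereas you derive the non-existence of latin quandles of size $2p$ internally: such a quandle is faithful, solvable and non-simple (finite simple latin quandles have prime-power order), so it has a minimal congruence, which by Lemma \ref{minimal cong} is abelian with blocks of prime-power size; either the blocks or the factor would then be a latin quandle of size $2$, which cannot exist. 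This makes the argument self-contained and uniform in $p$. For part (ii), the paper bounds the height of $\con(Q)$ by observing that the congruence lattices of the latin quandles of sizes $4$, $16$, $p$, $4p$ all have height at most $3$, which leans on their classification; you instead note that a chain $0_Q<\beta<\gamma<\delta<1_Q$ would produce three distinct factor sizes forming a proper divisibility chain inside $\{4,16,p,4p\}$, and no such chain exists. Your route for (ii) uses only part (i) and elementary arithmetic; the paper's is shorter but depends on external classification data. Both arguments are sound.
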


\begin{proof}
(i) The size of $Q/\alpha$ divides the size of $Q$, so $|Q/\alpha|=2^k p^s$ for $0\leq k\leq 4$ and $0\leq s\leq 1$. The factors or congruence blocks of $Q$ cannot have size $2$ or $2p$ because there are no connected quandles for such orders for $p>5$ (see \cite{McCarron}) and there are no latin quandles of size $6$ and $10$ (see the database of connected quandles \cite{RIG} in GAP). 

(ii) Let $\alpha\in \ma_Q$. Then according to (i), we have $|Q/\alpha|\in \{4,16,p,4p\}$. In all cases, the height of $Q/\alpha$ is less than or equal to $3$ and therefore the height of $Q$ is less than or equal to $4$. 
%
%
%
%
%
%
\end{proof}
According to Lemma \ref{meet of max is min}(ii) we can apply the results of Section \ref{slim} to latin quandles of size $16p$, and so we know how their congruence lattices look like.

Let us list the latin quandles of size $\{4,16,p,4p\}$ where $p$ is an odd prime. 
\begin{itemize}
    \item[(i)] We denote by $\Q_4$ the unique latin quandle of order $4$. In particular $\Q_4$ is affine over $\Z_2^2$ and it is simple.

\item[(ii)] Latin quandles of size $16$ are affine over elementary abelian groups $\Z_2^4$ \cite{RIG}.

\item[(iii)] Latin quandles of size $p$ are affine over cyclic groups of prime size \cite{EGS}. 


 \item[(iv)] Latin quandles of size $4p$ with $p=3,5$ are affine \cite{RIG}. 
 Latin quandles of size $4p$ for $p>5$ are affine or are locally strictly simple, and they are completely classified in \cite{LSS}. The affine quandles of size $4p$ are isomorphic to $\mathcal{Q}_4\times \aff(\Z_p,f)$ and are generated by every pair $(x,0),(0,y)$ for $x,y\neq 0$. We denote the non-affine latin quandles of size $4p$ by $Q(p,j)$ for $j=1,2$. Such quandles are in the class $\LSS(p,4,0_Q)$ and exist only when $p=1\pmod{3}$. Let $k\in \Z_p$ such that $k^3=1$ and $k\neq 1$. The coset representation of $Q(p,j)$ is $\mathcal{Q}(\mathcal{G}_k,Fix(f(j)),f(j))$ where $\mathcal{G}_k\cong \Z_p^2\rtimes_\rho \mathcal{Q}_8$ is the group defined in Subsection \ref{grup for sr}, where
 $$\mathcal{Q}_8=\langle x,y,z\,|\, x^2,\, y^2,\, [x,y]z^{-1},\, z^2,\,[z,y],\,[z,x]\rangle,$$
 the action $\rho$ is defined on the generators of $\mathcal{Q}_8$ by setting 
\begin{displaymath}
\rho_x=\begin{bmatrix}
0 & -1\\
1 & 0
\end{bmatrix},\quad
\rho_{y}=\begin{bmatrix}
k^2 & k\\
k & -k^2
\end{bmatrix},\quad \rho_z=\begin{bmatrix}
-1 & 0\\
0 & -1
\end{bmatrix},
\end{displaymath}
 and 
\begin{align}\label{fj}
      f(j)=\begin{cases}
          x\mapsto y,\\
                  y\mapsto xy,\\
                  z\mapsto z\\   f|_{\gamma_2(\mathcal{G}_k)}=\begin{bmatrix}
                      -k(1+k^j) & -(1+k^j)\\
                      0 & -k^2(1+k^j)
                  \end{bmatrix}
                  \end{cases}
\end{align}
 for $j=1,2$.

\end{itemize}

In the paper, we are also using the classification of non-faithful connected quandles of size $32$ that can be found in \cite{RIG}. We are collecting the data we will use later in Table \ref{32}, in particular, we list the connected non-faithful quandles of size $32$ such that the abelianization of the displacement group is the elementary abelian $2$-group of size $16$.


\begin{table}[ht!]
    \centering
    \begin{tabular}{|c|c|c|c|}
\hline
$Q$   & $\dis(Q)$ &  $|\con(Q/\gamma_Q)|$  \\
\hline
\text{{\tt SmallQuandle(32,2)}} & \text{{\tt SmallGroup(32,47)}}  &  7   \\
\text{{\tt SmallQuandle(32,3)}} & \text{{\tt SmallGroup(32,49)}}   &   7  \\
\text{{\tt SmallQuandle(32,4)}} & \text{{\tt SmallGroup(32,50)}}   &   2  \\
\text{{\tt SmallQuandle(32,5)}} & \text{{\tt SmallGroup(32,47)}}   &   3  \\
\text{{\tt SmallQuandle(32,6)}} & \text{{\tt SmallGroup(32,49)}}    &   3  \\
\hline 
    \end{tabular}
    \caption{Non-affine connected quandles of size $32$ such that the largest affine factor is affine over the group $\Z_2^4$.}
    \label{32}
\end{table}


\subsection{Subdirectly reducible case}

Latin quandles of size $4,16,p$ and $4p$ are classified, so we can focus on directly indecomposable latin quandles of size $16p$, where $p$ is an odd prime.

\begin{lemma}\label{16p nilpotent are abelian}
Let $Q$ be a latin quandle of size $16p$. The following are equivalent: 
\begin{itemize}
    \item[(i)] $Q$ is nilpotent.
  \item[(ii)] $Q$ is abelian.
\end{itemize}
Moreover, if $Q$ is nilpotent, then $Q$ is directly decomposable. 
\end{lemma}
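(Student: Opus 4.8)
The plan is to establish the non-trivial implication (i) $\Rightarrow$ (ii) and the \emph{moreover} clause simultaneously, by producing an explicit direct decomposition of $Q$; the converse (ii) $\Rightarrow$ (i) is immediate, since every abelian algebra is nilpotent of length $1$ (indeed $\gamma_1(Q)=0_Q$).

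Assume then that $Q$ is nilpotent. As $Q$ is latin it is connected and faithful, and by \cite[Theorem 1.2]{CP} the group $G=\dis(Q)$ is nilpotent, hence the internal direct product of its Sylow subgroups. I would first argue that only the primes $2$ and $p$ occur. The blocks of the congruence $\mathcal{O}_{P_r}$ are the orbits of the $r$-Sylow subgroup $P_r$, so they have $r$-power size; being blocks of a congruence of the connected quandle $Q$, they are permuted transitively by $\lmlt(Q)$ (which lies in $\aut{Q}$) and so share a common size, which must divide $|Q|=16p$. For $r\notin\{2,p\}$ this forces the orbits to be singletons, whence $P_r$ acts trivially and, by faithfulness of the action of $G$ on $Q$, $P_r=1$. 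Thus $G=P_2\times P_p$, and both factors are non-trivial since $Q$ is connected of size $16p$, so $16p\mid |G|$.

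Next I would apply Lemma \ref{Sylow} with $N=G$. Writing $\alpha=\mathcal{O}_{P_2}$ and $\beta=\mathcal{O}_{P_p}$, it yields $\alpha\wedge\beta=0_Q$ and $\alpha\circ\beta=\mathcal{O}_{G}=1_Q$, the last equality because $G$ is transitive. These congruences are proper: $\alpha=1_Q$ (resp. $\beta=1_Q$) would make the $2$-group $P_2$ (resp. the $p$-group $P_p$) transitive on the $16p$-element set $Q$, which is impossible, and then $\alpha\circ\beta=1_Q$ rules out either being $0_Q$. Hence $\alpha,\beta$ are complementary factor congruences, so $Q$ is directly decomposable and $Q\cong Q/\alpha\times Q/\beta$; this is exactly the \emph{moreover} statement.

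It remains to identify the two factors. The blocks of $\beta$ are the orbits of $P_p$, all of equal $p$-power size; this size is not $1$ (else $P_p=1$), so it equals $p$, giving $|Q/\beta|=16$ and hence $|Q/\alpha|=p$. Each factor is latin, since right multiplications split coordinatewise ($R_{(a,b)}=(R_a,R_b)$) and bijectivity of all $R_{(a,b)}$ forces bijectivity of each $R_a$ and $R_b$. Thus $Q/\alpha$ is a latin quandle of size $p$ and $Q/\beta$ one of size $16$, both of which are affine (over $\Z_p$ and $\Z_2^4$ respectively) by \cite{EGS,RIG}, hence abelian by \cite{Medial}. A direct product of abelian quandles is abelian, so $Q$ is abelian, completing (i) $\Rightarrow$ (ii). I expect the only genuinely delicate points to be bookkeeping: verifying that the two $\mathcal{O}$-congruences are proper and non-trivial, and pinning the factor sizes to $\{p,16\}$ rather than the a priori admissible $\{4,4p\}$ allowed by Lemma \ref{meet of max is min}(i) — the latter being excluded precisely because the blocks of $\beta$ are orbits of a $p$-group.
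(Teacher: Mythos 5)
Your proof is correct. The paper's own argument is much shorter: it simply invokes \cite[Theorem 1.4]{CP}, which states that finite latin nilpotent quandles decompose as direct products of quandles of prime-power order, and then concludes because latin quandles of size $16$ and $p$ are abelian. What you have done is, in effect, reprove that decomposition theorem in the special case at hand: you pass to the nilpotent displacement group, split it into its Sylow subgroups, and use Lemma \ref{Sylow} together with transitivity of $\dis(Q)$ to manufacture the complementary factor congruences $\mathcal{O}_{P_2}$ and $\mathcal{O}_{P_p}$ by hand. The bookkeeping you flag --- properness and non-triviality of the two congruences, exclusion of primes other than $2$ and $p$ via the orbit-size argument, and pinning the block sizes to $p$ and hence the factor sizes to $16$ and $p$ --- is all handled correctly, and the final step (factors of sizes $16$ and $p$ are affine, hence their product is abelian) coincides with the paper's. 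The trade-off is self-containedness versus length: your route relies only on Lemma \ref{Sylow}, faithfulness and connectedness of latin quandles, and \cite[Theorem 1.2]{CP}, whereas the paper outsources precisely the decomposition step to \cite[Theorem 1.4]{CP}.
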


\begin{proof}
Abelian quandles are nilpotent. Assume that $Q$ is nilpotent. Finite latin nilpotent quandles are direct products of prime power order quandles \cite[Theorem 1.4]{CP}. Latin quandles of size $16$ and $p$ are abelian, therefore so is their direct product. Hence $Q$ is abelian and directly decomposable.
\end{proof}


\begin{proposition}\label{Sub red}
Let $Q$ be a subdirectly reducible latin quandle of size $16p$. If $Q$ is directly indecomposable then 
%
    %
    $\con(Q)$ is the following:
\begin{align}\label{cong of not SI}
    \xymatrixrowsep{0.15in}
		\xymatrixcolsep{0.15in}
		\xymatrix{ 
			 & 1_Q  \ar@{-}[d]^4& &\\
			 & \nu_Q \ar@{-}[dr]^p\ar@{-}[dl]_4 & &\\
			\gamma_Q \ar@{-}[dr]_p  & & \zeta_Q\ar@{-}[dl]^4 \\
			 & 0_Q&  &
		}
\end{align}   

\end{proposition}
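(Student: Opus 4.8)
The plan is to pin down the shape of $\con(Q)$ by combining the structural dichotomy for directly indecomposable congruence slim Malt'sev algebras (Proposition \ref{lattices}) with the solvability of latin quandles and the size constraints from Lemma \ref{meet of max is min}. First I would record that $Q$ is a finite latin quandle, hence connected, faithful and solvable, and by Lemma \ref{meet of max is min}(ii) it is congruence slim; since $Q$ is also directly indecomposable and latin quandles are Malt'sev, Proposition \ref{lattices} applies. Because $Q$ is subdirectly reducible it is not subdirectly irreducible, so $\con(Q)$ is not a chain (a chain would give a smallest non-trivial congruence); this rules out case (i) of Proposition \ref{lattices}. Moreover $Q$ is not abelian and not nilpotent: by Lemma \ref{16p nilpotent are abelian} a nilpotent latin quandle of size $16p$ is directly decomposable, contradicting our hypothesis. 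So I can invoke Corollary \ref{mu and gamma} whenever $|\Ma_Q|>1$ to identify $\mu_Q=\gamma_Q$.

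Next I would show that the lattice has exactly the two extra atoms $\gamma_Q$ and $\zeta_Q$ sitting between $0_Q$ and $\nu_Q=\mu_Q$, and that the factor sizes are as drawn. The key arithmetic input is Lemma \ref{meet of max is min}(i): every factor $|Q/\alpha|$ lies in $\{1,4,16,p,4p,16p\}$, and dually each minimal congruence block has prime-power size. Since $Q$ is solvable, Lemma \ref{minimal cong}(iii) tells me $\nu_Q$ is abelian and each minimal congruence $\alpha$ is abelian with $\dis_\alpha$ an elementary abelian $p$-group (for the relevant prime) and block size a power of that prime (Lemma \ref{minimal cong}(ii)). I would argue that there must be more than one minimal congruence — if there were a unique minimal congruence then $Q$ would be subdirectly irreducible — so $|\ma_Q|>1$; by Proposition \ref{lattices} this forces case (iii), namely a unique maximal congruence $\nu_Q=\mu_Q$ with several atoms $\beta_1,\dots,\beta_n$ below it and $\nu_Q$ covered by $1_Q$. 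Combining with $|\Ma_Q|=1$ and $\mu_Q=\gamma_Q$ from Corollary \ref{mu and gamma}, the top part of the diagram ($1_Q \succ \nu_Q=\mu_Q=\gamma_Q$) is determined, and I must show the factor $|Q/\nu_Q|=4$.

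The heart of the argument is identifying the atoms as precisely $\gamma_Q$ and $\zeta_Q$ and computing the four labelled indices. I would use the center: by Lemma \ref{center for latin}, $\zeta_Q=\mathcal{O}_{Z(\dis(Q))}$ and $\dis_{\zeta_Q}=Z(\dis(Q))$. The decisive point is to show $\zeta_Q$ is a nonzero atom distinct from $\gamma_Q$ with $\zeta_Q\wedge\gamma_Q=0_Q$, so that by Lemma \ref{central} the factor $Q/\zeta_Q$ is faithful and $\dis_{\zeta_Q}=\dis^{\zeta_Q}$. The size labels should then follow by the multiplicativity of block sizes along the three-element chains $0_Q<\gamma_Q<\nu_Q$ and $0_Q<\zeta_Q<\nu_Q$: the blocks of $\gamma_Q$ have size $p$ (so $|Q/\gamma_Q|=16$ and the $\gamma_Q$-to-$\nu_Q$ step contributes the factor $4$, matching the $4p$-factor $Q/\gamma_Q$), while the blocks of $\zeta_Q$ have size a power of $2$ equal to $4$, giving the two edges labelled $4$ and forcing $|Q/\nu_Q|=4$ at the top and the $p$-labelled edge $\nu_Q\to\zeta_Q$. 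I expect the main obstacle to be ruling out the alternative configurations allowed by Proposition \ref{lattices}(iii) with $n\neq 2$ atoms, or with atoms whose block sizes do not split as $\{p\}$ and $\{4\}$: I would close these off by a careful divisibility and faithfulness analysis, using that $\dis_\alpha|_{[a]}$ embeds in $\dis([a])^n$ (Corollary \ref{embedding as quandle}(ii)) to constrain the prime powers, together with the fact that $Q$ is not nilpotent to force the coexistence of both a $p$-atom ($\gamma_Q$) and a $2$-atom ($\zeta_Q$), so that exactly two atoms survive and the diagram is as claimed.
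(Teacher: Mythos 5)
Your overall framework is right — Proposition \ref{lattices} applies, case (i) is excluded because $Q$ is subdirectly reducible, $|\ma_Q|>1$ forces case (iii) with a unique maximal congruence $\nu_Q$, and the two atoms should turn out to be $\gamma_Q$ and $\zeta_Q$ — but there is a concrete error in the middle and the decisive steps are left as intentions rather than arguments. The error: you invoke Corollary \ref{mu and gamma} to get $\mu_Q=\gamma_Q$ and then write ``$1_Q\succ\nu_Q=\mu_Q=\gamma_Q$''. Corollary \ref{mu and gamma} requires $|\Ma_Q|>1$, which is exactly the configuration of Proposition \ref{lattices}(ii); in case (iii), which you have just established, $\Ma_Q=\{\nu_Q\}$ is a singleton and the corollary does not apply. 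Worse, the conclusion is false for these quandles: in the correct lattice $\gamma_Q$ is one of the \emph{atoms} (the one with $|Q/\gamma_Q|=16$, since latin quandles of size $16$ are affine, so $\gamma_Q$ lies below that atom and is nonzero because $Q$ is not abelian), whereas $|Q/\nu_Q|=4$, so $\gamma_Q\neq\nu_Q$.

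The genuine gap is that you never pin down $|Q/\nu_Q|$ or the number of atoms; you defer both to ``a careful divisibility and faithfulness analysis.'' The paper's proof does this by a three-way case analysis on $|Q/\nu_Q|\in\{4,16,p\}$ (the factor is simple latin, hence of prime-power order): if $|Q/\nu_Q|=16$ the blocks have prime size so $\nu_Q$ would be minimal and $Q$ subdirectly irreducible; if $|Q/\nu_Q|=p$ then each $Q/\alpha_i$ lies in $\LSS(4,p,\sigma_Q)$, forcing $p=7$, and those quandles are not latin; hence $|Q/\nu_Q|=4$. Then the blocks of $\nu_Q$ are abelian latin quandles of size $4p$, hence isomorphic to $\mathcal{Q}_4\times\aff(\Z_p,f)$ with exactly two proper congruences, and Lemma \ref{below abelian}(ii) gives $n=2$ — this is the counting argument you were missing, not a divisibility estimate via Corollary \ref{embedding as quandle}. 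Finally, you assert but do not prove that the second atom equals $\zeta_Q$: one needs $\alpha_2\wedge\gamma_Q=0_Q$ and Lemma \ref{central} to get $\alpha_2\leq\zeta_Q$, and then the observation that $\alpha_2<\zeta_Q$ would force $\zeta_Q=\nu_Q\geq\gamma_Q$, making $Q$ nilpotent and hence directly decomposable by Lemma \ref{16p nilpotent are abelian}, a contradiction. Without these three ingredients the proposed proof does not go through.
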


\begin{proof}
Let $Q$ be a subdirectly reducible latin quandle of size $16p$. If $Q$ is directly indecomposable then the congruence lattice of $Q$ is as follows according to Proposition \ref{lattices}(ii):
\begin{center}
		$\xymatrixrowsep{0.15in}
		\xymatrixcolsep{0.15in}
		\xymatrix{ 
			& & 1_Q  \ar@{-}[d]& &\\
			& & \nu_Q  \ar@{-}[d]\ar@{-}[dr]\ar@{-}[drr] \ar@{-}[dl] \ar@{-}[dll] & &\\
	\alpha_1 \ar@{-}[drr] & \alpha_2\ar@{-}[dr] &\ldots\ar@{-}[d] &  \alpha_{n-1}\ar@{-}[dl] & \alpha_n \ar@{-}[dll]\\
			& & 0_Q&  &
		}$
	\end{center}
where $n\geq 2$. Moreover, $|Q/\nu_Q|\in \{4,16,p\}$ since finite simple latin quandles have size a power of a prime (as they are of the form $\aff(\Z_p^n,f)$). Let us proceed case by case according to the size of $Q/\nu_Q$.
\begin{itemize}
    \item If $|Q/\nu_Q|=16$, then $\nu_Q$ has blocks of prime size and so $\nu_Q$ is minimal. Thus, $Q$ is not subdirectly reducible.

\item If $|Q/\nu_Q|=p$, then $Con(Q/\alpha_i)$ is a chain of length $3$ and so $Q/\alpha_i\in \LSS(4,p,\sigma_Q)$ for $i=1,\ldots, n$. Therefore $p=7$ (see \cite[Proposition 5.3]{LSS}). Such quandles are not latin, as shown in \cite[Section 5]{LSS}.

\item If $|Q/\nu_Q|=4$, then $[x]_{\nu_Q}$ has size $4p$. Latin quandles are solvable and faithful, so according to Corollary \ref{minimal cong}(iii) the congruence $\nu_Q$ is abelian and $[x]_{\nu_Q}$ is an abelian quandle of size $4p$. In particular $[x]_{\nu_Q}\cong \Q_4\times \aff(\Z_p,f)$ and $[x]_{\nu_Q}$ has $2$ proper congruences. Hence $n=2$ according to Lemma \ref{below abelian}(ii). 

\end{itemize}

We can assume that $\ma_Q=\{\alpha_1,\alpha_2\}$ with $|Q/\alpha_1|=16$ and $|Q/\alpha_2|=4p$. Then $Q/\alpha_1$ is abelian (latin quandles of size $16$ are affine \cite{RIG}), so we have $\gamma_Q\leq \alpha_1$. Then we can conclude that $\gamma_Q=\alpha_1$, since $Q$ is not abelian.  

The size of $Q/\alpha_2$ is $4p$. Since $\alpha_2\wedge \gamma_Q=0_Q$, according to Lemma \ref{central}, $\alpha_2\leq \zeta_Q$ and moreover $Q/\alpha_2$ is not abelian given that $\gamma_Q$ is not contained in $\alpha_2$. Hence $Q/\alpha_2\in \LSS(p,4,\sigma_Q)$. If $\alpha_2<\zeta_Q$ then $\zeta_Q=\nu_Q$ and therefore $\gamma_Q\leq \zeta_Q$. Thus, $Q$ is nilpotent and is therefore directly decomposable according to Lemma \ref{Sub red}, a contradiction. 

Therefore, the congruence lattice of $Q$ is as in \eqref{cong of not SI}.
\end{proof}

   \begin{proposition}\label{group K}
       Let $Q$ be a subdirectly reducible directly indecomposable latin quandle of size $16p$ and $G=\dis(Q)$. Then: 
       \begin{enumerate}
                  \item[(i)] $Z(G)=\dis_{\zeta_Q}=\dis^{\zeta_Q}\cong \Z_2^2$.
           \item[(ii)]  $G/\gamma_2(G)\cong$ \texttt{SmallGroup(32,47)} $\cong \mathcal{Q}_8\times \Z_2^2$. 
           \item[(iii)] $p=1\pmod{3}$ and $G\cong \mathcal{G}_k \times Z(G)$.
              \end{enumerate}

   \end{proposition}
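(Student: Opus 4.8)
The plan is to read the structure of $G=\dis(Q)$ off the congruence lattice \eqref{cong of not SI} produced by Proposition \ref{Sub red}, translating its three proper congruences through the Galois correspondence. Throughout I would use $\dis^{\gamma_Q}=\gamma_1(G)$ (the abelianization identity in the commutator-theory list), the equalities $\dis_{\zeta_Q}=\dis^{\zeta_Q}=Z(G)$ coming from Lemma \ref{center for latin} and Lemma \ref{central}(ii) (legitimate since $\zeta_Q\wedge\gamma_Q=0_Q$), and the meet formula $\gamma_1(G)\cap Z(G)=\dis^{\gamma_Q}\cap\dis^{\zeta_Q}=\dis^{\gamma_Q\wedge\zeta_Q}=\dis^{0_Q}=1$. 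For part (i), $\zeta_Q$ is a minimal abelian congruence whose blocks have size $4$, so Lemma \ref{minimal cong}(ii) makes $\dis_{\zeta_Q}$ an elementary abelian $2$-group; being equal to $Z(G)$ it is a central, hence semiregular, subgroup of the transitive group $G$, and its orbits are exactly the blocks of $\mathcal{O}_{\dis_{\zeta_Q}}=\zeta_Q$. A free action with orbits of size $4$ forces $|Z(G)|=4$, giving $Z(G)\cong\Z_2^2$.

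Next I would identify the top factor. The quotient $Q/\zeta_Q$ is latin of size $4p$ and is non-affine, because $\gamma_Q\not\leq\zeta_Q$ prevents $Q/\zeta_Q$ from being abelian. By the classification of latin quandles of size $4p$ recalled above, the only non-affine such quandles are $Q(p,1)$ and $Q(p,2)$; these exist only for $p\equiv1\pmod 3$ and have displacement group $\mathcal{G}_k\cong\Z_p^2\rtimes_\rho\mathcal{Q}_8$. Since $\dis(Q/\zeta_Q)\cong G/\dis^{\zeta_Q}=G/Z(G)$, this already yields $p\equiv1\pmod 3$, the isomorphism $G/Z(G)\cong\mathcal{G}_k$, the order $|G|=32p^2$, and (via Lemma \ref{dis for 3-chains}) $Z(\mathcal{G}_k)=1$, a fact I would keep for part (iii).

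For part (ii) I would compute the lower central series. In $\overline{G}=G/Z(G)\cong\mathcal{G}_k$ the central generator $z$ acts as $\rho_z=-\mathrm{Id}$, which is fixed-point-free, so $[\Z_p^2,\langle z\rangle]=\Z_p^2$ and one gets $\gamma_1(\overline G)=\Z_p^2\rtimes\langle z\rangle$ and $\gamma_2(\overline G)=\Z_p^2$. As $Z(G)$ is central, $\gamma_2(G)$ surjects onto $\gamma_2(\overline G)$ with kernel $\gamma_2(G)\cap Z(G)\leq\gamma_1(G)\cap Z(G)=1$, whence $\gamma_2(G)\cong\Z_p^2$ is a normal Sylow $p$-subgroup and $|G/\gamma_2(G)|=32$. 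Writing $S=G/\gamma_2(G)$ (a Sylow $2$-subgroup, by Schur--Zassenhaus $G=\gamma_2(G)\rtimes S$), I would record the invariants $S^{\mathrm{ab}}=G/\gamma_1(G)=\dis(Q/\gamma_Q)\cong\Z_2^4$, $[S,S]=\gamma_1(G)/\gamma_2(G)\cong\Z_2$, and $S/Z(G)\cong\overline G/\gamma_2(\overline G)\cong\mathcal{Q}_8$. Thus $S$ is a central extension of $\mathcal{Q}_8$ by $Z(G)\cong\Z_2^2$ with commutator subgroup of order $2$, and identifying such a group of order $32$ gives $S\cong\mathcal{Q}_8\times\Z_2^2=\texttt{SmallGroup(32,47)}$.

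Finally, part (iii) follows from (ii). In $S\cong\mathcal{Q}_8\times\Z_2^2$ the canonical factor $\mathcal{Q}_8^\ast$ satisfies $\mathcal{Q}_8^\ast\cap Z(G)=1$ (because $Z(G)$ meets $[S,S]=Z(\mathcal{Q}_8^\ast)$ trivially, the quotient $S\to S/Z(G)\cong\mathcal{Q}_8$ mapping the order-$2$ group $[S,S]$ injectively) and $\mathcal{Q}_8^\ast Z(G)=S$, so $\mathcal{Q}_8^\ast$ is a complement to $Z(G)$ in $S$; pulling back, $\gamma_2(G)\rtimes\mathcal{Q}_8^\ast$ is a complement to the central subgroup $Z(G)$ in $G$, hence normal, and therefore $G\cong(\gamma_2(G)\rtimes\mathcal{Q}_8^\ast)\times Z(G)\cong\mathcal{G}_k\times Z(G)$. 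The genuinely delicate step is the exact identification of $S$ in part (ii): knowing $G/Z(G)\cong\mathcal{G}_k$ only pins $S$ down modulo its centre, and one must rule out $D_4\times\Z_2^2=\texttt{SmallGroup(32,46)}$, i.e. decide the quaternion-versus-dihedral type. I expect this to rest on the fact that the order-$4$ elements of $\mathcal{Q}_8\leq\mathcal{G}_k$ (forced by $\rho$ being faithful with $\rho_x$ of order $4$) survive in the quotient $S/Z(G)\cong\mathcal{Q}_8$, excluding the dihedral option; making this precise is where the finer arithmetic of $G$, supplied by the group-theoretic results of Section \ref{sec:appendix} or by direct computation, is indispensable.
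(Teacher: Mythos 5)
Your overall architecture is sound, and parts (i) and (iii) essentially reproduce the paper's argument; your derivation of $p\equiv 1\pmod 3$, $G/Z(G)\cong\mathcal{G}_k$ and $|G|=32p^2$ directly from the classification of non-affine latin quandles of size $4p$ is legitimate and in fact slightly more streamlined than the paper, which at this stage only extracts $|\dis(Q/\zeta_Q)|=8p^2$ and defers the identification $G/Z(G)\cong\mathcal{G}_k$ to the end of (iii). Where you genuinely diverge is part (ii): the paper passes to the covering quandle $Q'=\mathcal{Q}(G/\gamma_2(G),f_{\gamma_2(G)})$ of $Q/\gamma_Q$ and invokes the classification of connected quandles of order $32$ (Table \ref{32}) to restrict $K=G/\gamma_2(G)$ to \texttt{SmallGroup(32,47)} or \texttt{SmallGroup(32,49)}, then kills the extraspecial case using $Z(G)\cap\gamma_1(G)=1$; you instead try to pin down $S=G/\gamma_2(G)$ purely group-theoretically as a central extension of $\mathcal{Q}_8$ by $\Z_2^2$.

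That is where the gap sits. The statement ``$S$ is a central extension of $\mathcal{Q}_8$ by $Z(G)\cong\Z_2^2$ with commutator subgroup of order $2$'' does \emph{not} by itself identify $S$: the group $(\Z_4\rtimes\Z_4)\times\Z_2$, with $\Z_4\rtimes\Z_4=\langle a,b\mid a^4,\,b^4,\,bab^{-1}a\rangle$ and central kernel $\langle a^2b^2\rangle\times\Z_2$, is a non-split central extension of $\mathcal{Q}_8$ by $\Z_2^2$ whose commutator subgroup is $\langle a^2\rangle\cong\Z_2$ and meets the kernel trivially. What rescues the claim is the further invariant $S^{\mathrm{ab}}\cong\Z_2^4$, which you record but never actually deploy: among the order-$32$ groups with commutator subgroup of order $2$ and elementary abelian abelianization of rank $4$ (namely \texttt{SmallGroup(32,i)} for $i=46,\ldots,50$), only $\mathcal{Q}_8\times\Z_2^2$ admits a central Klein four-subgroup with quotient isomorphic to $\mathcal{Q}_8$ --- the two extraspecial groups have center of order $2$, while $D_4\times\Z_2^2$ and $(D_4\circ\Z_4)\times\Z_2$ have no central $\Z_2^2$ with nonabelian quotient. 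Your closing paragraph then misdiagnoses the danger: $D_4\times\Z_2^2$ is already excluded by the mere fact that $S$ modulo the image of $Z(G)$ is $\mathcal{Q}_8$, and needs no ``quaternion-versus-dihedral'' arithmetic about orders of elements, whereas the extensions that genuinely must be ruled out (those with non-elementary abelianization, or with $[S,S]$ contained in the kernel) are the ones you do not address. So the conclusion of (ii) is correct and the invariants you assemble do suffice, but the decisive elimination step is asserted rather than proved, and the completion you sketch would not close it.
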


\begin{proof}

(i) Let $G=\dis(Q)$. The blocks of $\zeta_Q$ have size $4$, so they are isomorphic to $\mathcal(Q)_4\cong \aff(\Z_2^2,f)$ and $\gamma_Q\wedge\zeta_Q=0_Q$. According to Lemma \ref{central}, $\dis_{\zeta_Q}=\dis^{\zeta_Q}=Z(G)$ (see Lemma \ref{center for latin}) and has size $4$ (a central group of a transitive group is semiregular). Moreover, $Z(G)$ embeds into a power of $\dis([x]_{\zeta_Q})$ and so $Z(G)\cong \Z_2^2$. 

(ii) Since $Q/\zeta_Q\in \LSS(p,4,\sigma_Q)$ then $|\dis(Q/\zeta_Q)|=8p^2$ (see \cite[Proposition 5.4]{LSS}) and therefore $$|G|=|\dis(Q/\zeta_Q)||\dis^{\zeta_Q}|= 8p^2\cdot 4=|G/\gamma_1(G)||\gamma_1(G)|=16|\gamma_1(G)|.$$ So we have that $|G|=32p^2$ and $\gamma_1(G)$ has size $2p^2$. According to Lemma \ref{dis_gamma} then $\dis_{\gamma_Q}=\gamma_2(G)\cong \Z_p^2$ as it is a non-cyclic elementary abelian $p$-group ($\gamma_Q$ is a minimal abelian non-central congruence).

Let $Q=\mathcal{Q}(G,G_x,f)$ where $f=\widehat{L_x}$ for some $x\in Q$. According to \cite[Lemma 1.10]{Involutory} the principal quandle $Q'=\mathcal{Q}(G/\gamma_2(G),f_{\gamma_2(G)})$ is a connected cover of $Q/\gamma_Q$ of size $32$ and $K=G/\gamma_2(G)\cong \dis(Q')$. In particular $\lambda_{Q'}=\gamma_{Q'}$ is a minimal congruence with blocks of size $2$, $Q'/\lambda_{Q'}\cong Q/\gamma_Q\cong \aff(\Z_2^4,f_{\gamma_1(G)})$ and $\con(Q'/\gamma_{Q'})$ is a chain of length $3$. The quandles of size $32$ are classified in \cite{RIG} and by looking of the size of $\con(Q/\gamma_Q)$ we have $K\cong {\tt SmallGroup(32,i)}$ for $i=47,49$. Assume that $K=G/\gamma_2(G)\cong{\tt SmallGroup(32,49)}$. Let $x=yk\in Z(G)$ for $y\in \Z_p^2$ and $k\in K$. Then $k\in Z(K)$ and so $k\in \gamma_1(K)\leq \gamma_1(G)$. Thus $x\in Z(G)\cap \gamma_1(G)=1$, contradiction. Hence $K\cong {\tt SmallGroup(32,47)}$.




(iii) Note that $G\cong \gamma_2(G)\rtimes_\rho K$. Let $xk\in Z(G)$ with $x\in \Z_p^2$ and $k\in K$. Then $k\in Z(K)\cap \ker{\rho}$ and $x\in Fix(\rho)=\setof{y\in \Z_p^2}{\rho_k(y)=y \text{ for every }k\in K}$. Then $|xk|=|x||k|=2$, therefore $x=1$. Then $Z(G)=\setof{(1,k)}{k\in Z(K)\cap \ker{\rho}}$. Moreover, $K\cong \mathcal{Q}_8\times \Z_2^2$ and $Z(K)=Z(\mathcal{Q}_8)\times \Z_2^2$. Since $\gamma_1(G)\cap Z(G)=1$ we have that $$Z(G)\cap ((\Z_p^2\rtimes \mathcal{Q}_8)\times 1) \leq Z(G)\cap \left(1\times Z(\mathcal{Q}_8)\times 1\right)\leq Z(G)\cap \gamma_1(K)\times 1\leq Z(G)\cap \gamma_1(G)=1.$$ So $G=\left(\Z_p^2\rtimes_\rho \mathcal{Q}_8\times 1\right)\times Z(G)$. Note that $Q/\zeta_Q$ has size $4p$ and is subdirectly reducible, so $p=1\pmod{3}$. Hence $G/Z(G)=G/\dis^\zeta_Q\cong \dis(Q/\zeta_Q)\cong \mathcal{G}_k$.
\end{proof}



\begin{theorem}\label{sr quandles}
Let $p=1\pmod{3}$ and $1\neq k\in \Z_p$ such that $k^3=1\pmod{3}$. The representatives of isomorphism classes of subdirectly reducible directly indecomposable latin quandles of size $16p$ are of the form $\mathfrak{Q}(p,j)=\mathcal{Q}(\mathcal{G}_k\times \Z_2^2,Fix(f(j,a)),f(j,a))$ where
   \begin{align}\label{aut of G_k x Klein_0}
        f(j,a)=\begin{cases}
          x\mapsto y a,\\
                  y\mapsto xy ,\\
                  z\mapsto z\\   f|_{\gamma_2(G)}=\begin{bmatrix}
                      -k(1+k^j) & -(1+k^j)\\
                      0 & -k^2(1+k^j)
                  \end{bmatrix},
                  \\
                  f|_{Z(G)}=\begin{bmatrix}
                      0 & 1\\
                      1 & 1
                  \end{bmatrix},
        \end{cases}
    \end{align}
   for $j=1,2$ and $a=(1,0)$.
\end{theorem}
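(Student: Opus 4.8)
The plan is to combine the structural result of Proposition \ref{group K} with the isomorphism Theorem \ref{iso theo}. By Proposition \ref{group K}, any subdirectly reducible directly indecomposable latin quandle $Q$ of size $16p$ has $p\equiv 1\pmod 3$ and displacement group $G\cong \mathcal{G}_k\times \Z_2^2$, with $Z(G)=\dis_{\zeta_Q}\cong \Z_2^2$. Since $Q$ is latin, it is connected and faithful, so by the homogeneous representation (the Proposition of \cite{GB} quoted before Theorem \ref{iso theo}) we may write $Q=\mathcal{Q}(G,Fix(f),f)$ for an appropriate $f=\widehat{L_x}\in\aut{G}$ with $\dis(Q)=G$. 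Theorem \ref{iso theo} then reduces the classification of such quandles up to isomorphism to the classification, up to conjugacy in $\aut{G}$, of those automorphisms $f$ of $G$ which produce a latin quandle of size $16p$ with displacement group exactly $G$. So the whole statement amounts to \emph{determining the admissible $\aut{G}$-conjugacy classes of automorphisms $f$}, and the claim is that there are exactly two of them, represented by $f(1,a)$ and $f(2,a)$.

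First I would pin down the constraints an admissible $f$ must satisfy. The congruence lattice of $Q$ is \eqref{cong of not SI}, and the three proper congruences $\gamma_Q,\zeta_Q,\nu_Q$ correspond under the Galois connection to the characteristic-in-$G$ subgroups $\dis_{\gamma_Q}=\gamma_2(G)\cong\Z_p^2$, $\dis_{\zeta_Q}=Z(G)\cong\Z_2^2$, and $\dis_{\nu_Q}=\gamma_2(G)\times Z(G)$; any $f=\widehat{L_x}$ arising from $Q$ must leave each of these invariant, and moreover must act on the quotient $G/\gamma_1(G)\cong\Z_2^4$ (the displacement group of the simple factor $Q/\gamma_Q\cong\aff(\Z_2^4,\cdot)$) fixed-point-freely, since $Q/\gamma_Q$ is simple latin. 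The direct-product decomposition $G=(\Z_p^2\rtimes_\rho\mathcal{Q}_8)\times Z(G)=\mathcal{G}_k\times\Z_2^2$ lets me analyse $f$ componentwise. On the $\mathcal{G}_k$ factor, $f$ must reduce modulo $Z(G)$ to an automorphism inducing the non-affine latin quandle $Q/\zeta_Q\cong Q(p,j)$ of size $4p$ from \cite{LSS}; by the classification of those quandles recalled in item (iv) of the list after Lemma \ref{meet of max is min}, this forces the $\mathcal{Q}_8$- and $\gamma_2$-components of $f$ to agree with $f(j)$ of \eqref{fj} for $j=1,2$, up to conjugacy. On the $Z(G)\cong\Z_2^2$ factor, $f|_{Z(G)}$ must be a fixed-point-free automorphism of $\Z_2^2$ (so that the blocks $[x]_{\zeta_Q}\cong\Q_4$ are latin and $Q$ itself is latin rather than merely nilpotent), and the only such elements of $GL_2(2)\cong S_3$ are the two $3$-cycles, which are conjugate in $\aut{G}$; I would record the representative $f|_{Z(G)}=\m0111$ as in \eqref{aut of G_k x Klein_0}.

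The subtle point—and the main obstacle—is the coupling term $a$: the naive product automorphism $f(j)\times f|_{Z(G)}$ need not give a \emph{latin} quandle with displacement group all of $G$, and one must twist the action of $f$ on the generator $x$ of $\mathcal{Q}_8$ by the element $a\in Z(G)$, sending $x\mapsto ya$ rather than $x\mapsto y$. I would verify that this twist is exactly what is needed for $\dis(\mathcal{Q}(G,Fix(f),f))=G$ (otherwise the displacement group drops to a proper subgroup and $Q$ would fail to be directly indecomposable, or would reduce to a direct product $\Q_4\times Q(p,j)$), and that it does not destroy latinity. Concretely this is the check that $f(j,a)$ as defined in \eqref{aut of G_k x Klein_0} is a well-defined automorphism of $\mathcal{G}_k\times\Z_2^2$ respecting the relations of $\mathcal{Q}_8$ and the action $\rho$, that $Fix(f(j,a))$ has the correct order so that $|Q|=16p$, and that the resulting coset quandle is latin. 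I would then confirm that different admissible choices of the coupling parameter and of $f|_{Z(G)}$ are absorbed into $\aut{G}$-conjugacy, so that only the index $j\in\{1,2\}$ survives as an isomorphism invariant.

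Finally I would invoke Theorem \ref{iso theo} once more to conclude: since $\dis(\mathfrak{Q}(p,j))=\mathcal{G}_k\times\Z_2^2=G$ for $j=1,2$, the two quandles $\mathfrak{Q}(p,1)$ and $\mathfrak{Q}(p,2)$ are isomorphic if and only if $f(1,a)$ and $f(2,a)$ are conjugate in $\aut{G}$; distinguishing them reduces, via the projection $G\to G/Z(G)\cong\mathcal{G}_k$, to distinguishing $Q(p,1)$ from $Q(p,2)$, which is already established in \cite{LSS}. Hence the two values $j=1,2$ yield non-isomorphic quandles, and together with the preceding analysis showing every admissible $f$ is conjugate to some $f(j,a)$, this gives exactly the two isomorphism classes claimed. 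I expect the representation-theoretic bookkeeping of the coupling term $a$ and the fixed-point-free condition on $f|_{Z(G)}$ to be where the real work lies, the rest being an application of the already-established machinery.
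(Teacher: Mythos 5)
Your proposal follows essentially the same route as the paper: Proposition \ref{group K} pins down $\dis(Q)\cong\mathcal{G}_k\times\Z_2^2$, the coset representation together with Theorem \ref{iso theo} reduces the classification to $\aut{G}$-conjugacy classes of admissible automorphisms, and the paper delegates your ``coupling term'' analysis to Lemma \ref{iso classes of sr}, which produces the representatives $f(j,a)$ with $a\in\{(0,0),(1,0)\}$ and discards $a=(0,0)$ because the resulting quandle is $Q(p,j)\times\Q_4$. One small correction to your hedged justification: for $a=(0,0)$ the displacement group does \emph{not} drop to a proper subgroup (by Lemma \ref{DD LQGs} it is still all of $G$); the only reason to exclude that case is direct decomposability, which is the alternative you also give.
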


\begin{proof}
Let $Q$ be a subdirectly reducible latin quandles of size $16p$ and $G=\mathcal{G}_k\times \Z_2^2$. According to Proposition \ref{group K} then $\dis(Q)\cong G$ and so $Q\cong \mathcal{Q}(G,Fix(f),f)$ where $f\in \aut{G}$. Since the factor $Q/\zeta_Q\cong Q(p,i)\cong \mathcal{Q}(G/Z(G),Fix(f_{Z(G)}), f_{Z(G)})$ for some $i=1,2$, then $f_{Z(G)}\in \mathcal{F}=\setof{g\in \aut{\mathcal{G}_k}}{|g|=3,\, |Fix(g)|=2p}$, see \cite{LSS}.
Moreover, the isomorphism classes of these quandles are in one-to-one correspondence with the conjugacy classes of automorphisms of $G$. According to Lemma \ref{iso classes of sr} a set of representatives of conjugacy classes of such automorphisms are as in \eqref{aut of G_k x Klein_0} with $a\in \{(0,0),(1,0)\}$. Note that if $a=(0,0)$ the associated quandle decomposes as the direct product of $Q(p,j)$ and $\mathcal{Q}_4$.
\end{proof}

\subsection{Subdirectly irreducible case}

In this section, we will study subdirectly irreducible latin quandles of size $16p$. We start considering quandles for which the congruence lattice is not a chain.

\begin{proposition}\label{SI 16p case 4p}
Let $Q$ be a latin subdirectly irreducible quandle of size $16p$. If $\con(Q)$ is not a chain we have:
\begin{itemize}
    \item[(i)] $\ma_Q=\{\gamma_Q\}$ and $Z(\dis(Q))=1$.
    \item[(ii)] If $|Q/\gamma_Q|=4p$, then $p\in \{3,5,7\}$ and 
    
     $$
    |\dis(Q)| \text{ divides } \begin{cases} 2^{6}p \text{ if } p\neq 3,\\
    2^{6}\cdot 3^2  \text{ if } p= 3.
    \end{cases}$$
    
    \item[(iii)] If $p>7$ then $Q/\gamma_Q\cong \mathcal{Q}_4^2$. 
\end{itemize}
\end{proposition}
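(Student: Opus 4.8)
The plan is to extract the whole statement from the shape of $\con(Q)$ and then push everything into the group $G=\dis(Q)$.

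\medskip

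\noindent\emph{Part (i).} Since $Q$ is subdirectly irreducible it has a smallest non-trivial congruence, so $\ma_Q$ is a singleton; moreover $Q$ is directly indecomposable and, being latin of size $16p$, it is congruence slim (Lemma \ref{meet of max is min}) and non-abelian (an abelian such quandle is directly decomposable by Lemma \ref{16p nilpotent are abelian}). As $\con(Q)$ is assumed not to be a chain, Proposition \ref{lattices} puts the lattice in shape (ii), whence $|\Ma_Q|>1$, and Corollary \ref{mu and gamma} gives $\gamma_Q=\mu_Q$, i.e. $\ma_Q=\{\gamma_Q\}$. Finally $Q$ is non-nilpotent (else abelian by Lemma \ref{16p nilpotent are abelian}), so Lemma \ref{dis_gamma} yields $Z(G)=1$.

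\medskip

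\noindent\emph{Part (ii).} Assume $|Q/\gamma_Q|=4p$, so $\gamma_Q$ has blocks of size $4$. The factor $Q/\gamma_Q$ is an abelian, hence affine, latin quandle of size $4p$, so $Q/\gamma_Q\cong\mathcal{Q}_4\times\aff(\Z_p,f)$; its congruence lattice is the four-element Boolean lattice (the two factors are simple of coprime order, so there are no skew congruences), hence $Q$ has exactly two maximal congruences, with simple factors $\mathcal{Q}_4$ and $\aff(\Z_p,f)$, and $Q/\gamma_Q$ is $2$-generated. Using $\dis^{\gamma_Q}=\gamma_1(G)$ and Lemma \ref{DD LQGs} I get $G/\gamma_1(G)\cong\dis(Q/\gamma_Q)\cong\Z_2^2\times\Z_p$. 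By Lemma \ref{dis_gamma} and Lemma \ref{minimal cong} the subgroup $\dis_{\gamma_Q}=\gamma_2(G)$ is elementary abelian of exponent $2$, and Corollary \ref{embedding as quandle} embeds it into $\dis(\mathcal{Q}_4)^2=\Z_2^4$, so $\gamma_2(G)\cong\Z_2^k$ with $k\le 4$. The same corollary embeds $\gamma_1(G)=\dis^{\gamma_Q}$ into $\aut{\mathcal{Q}_4}^2=(\Z_2^2\rtimes\Z_3)^2$, while Lemma \ref{N cyclic}, applied to the simple affine blocks $\mathcal{Q}_4$, bounds the $3$-part of $\gamma_1(G)$ by $3$ (a $3$-subgroup of $\dis^{\gamma_Q}$ embeds in $\aut{\mathcal{Q}_4}_x\cong\Z_3$). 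Thus $|\gamma_1(G)|\mid 2^4\cdot 3$ and $|G|=4p\,|\gamma_1(G)|$.

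\medskip

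The restriction $p\in\{3,5,7\}$ I would obtain from the conjugation action of a Sylow $p$-subgroup $P\cong\Z_p$ on $\gamma_2(G)\cong\Z_2^k$. If $P$ centralised $\gamma_2(G)$, then since $[G,\gamma_1(G)]=\gamma_2(G)$ a coprime-action argument would make $P$ centralise all of $\gamma_1(G)$; as $[G:C_G(P)]\le[G:\gamma_1(G)P]=4<p$ (for $p\ge5$) this forces $P$ to be the unique, hence characteristic, hence $\lmlt(Q)$-normal Sylow $p$-subgroup, so that $\mathcal{O}_P$ is a non-trivial congruence with $p$-power blocks lying above $\gamma_Q$ — impossible, since those blocks would be unions of the size-$4$ blocks of $\gamma_Q$ and $4\nmid p$. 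Hence $P$ acts non-trivially, so $p\mid|GL_k(2)|$ with $k\le4$, which gives $p\le 7$. The divisor now follows by tying $k$ to $p$: for $p=3$ the bound $|\gamma_1(G)|\mid 2^4\cdot3$ already yields $|G|\mid 2^6\cdot3^2$. \textbf{The step I expect to be the real obstacle} is the case $p=5,7$, where I must show that the residual factor $3$ disappears (so $\gamma_1(G)$ is a $2$-group and $|G|\mid 2^6p$); this means ruling out a $\Z_3\le\gamma_1(G)$, which I would do by analysing how its image interacts with the fixed-point-free action of $P$ on $\Z_2^k$ and showing the corresponding $3$-part would again produce a congruence whose blocks cannot accommodate the size-$4$ blocks of $\gamma_Q$.

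\medskip

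\noindent\emph{Part (iii).} For $p>7$ the values $|Q/\gamma_Q|\in\{1,16p\}$ force $Q$ abelian, $|Q/\gamma_Q|=4$ gives non-prime-power blocks (contradicting Lemma \ref{minimal cong}), and $|Q/\gamma_Q|=p$ gives a simple factor, hence a $3$-chain; the value $4p$ is excluded by (ii). Thus $|Q/\gamma_Q|=16$ and $Q/\gamma_Q\cong\aff(\Z_2^4,f)$ with at least two maximal congruences. Viewing $\Z_2^4$ as a semisimple $\F_2[f]$-module (the relevant $f$ has odd order), the only decomposition into non-trivial irreducibles admitting two maximal invariant subgroups is $\Z_2^4=U\oplus U$ with $U$ the unique $2$-dimensional irreducible, which gives $Q/\gamma_Q\cong\mathcal{Q}_4\times\mathcal{Q}_4=\mathcal{Q}_4^2$.
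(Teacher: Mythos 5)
Your parts (i) and (iii) are essentially sound, and most of the skeleton of (ii) matches the paper's proof (the $2$-generation of $Q/\gamma_Q\cong\mathcal{Q}_4\times\aff(\Z_p,f)$, the embeddings from Corollary \ref{embedding as quandle}, the cyclicity of the $3$-part via Lemma \ref{N cyclic}, and an action-of-$P$-on-an-elementary-abelian-$2$-group argument to force $p\in\{3,5,7\}$; the paper runs the latter on $H=\langle \gamma_1(G),x\rangle$ acting on $\gamma_1(G)\cong\Z_2^{2+k}$, $k\le 2$, rather than on $\gamma_2(G)$, but both versions work). However, the step you flag as ``the real obstacle'' is a genuine gap, not a detail: your bound $|\gamma_1(G)|\mid 2^4\cdot 3$ only yields $|G|\mid 2^6\cdot 3\cdot p$, and the proposed analysis of how a hypothetical $\Z_3\le\gamma_1(G)$ ``interacts with the fixed-point-free action of $P$'' is never carried out -- nor is it clear it would succeed, since a $\Z_3$ inside $\dis^{\gamma_Q}$ need not be normal in $\lmlt(Q)$ and so need not produce a congruence you can play off against the size-$4$ blocks of $\gamma_Q$. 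The paper kills the factor $3$ at the very first line of the proof by a cited structural fact rather than by a congruence argument: $Q$ is $2$-step solvable (both $\gamma_Q$ and $Q/\gamma_Q$ are abelian), and by \cite[Corollary 3.14]{Involutory} the prime divisors of $|\dis(Q)|$ are then only $2$ and $p$. Hence for $p\ne 3$ the group $\gamma_1(G)$ is automatically a $2$-group embedding into $(\Z_2^2)^2$, giving $|G|=4p\cdot 2^t$ with $t\le 4$, i.e.\ $|G|\mid 2^6p$; the case $p=3$ then needs the cyclic-$3$-Sylow bound exactly as you state. Without this (or an equivalent) input, your proof of (ii) does not reach the claimed divisibility for $p=5,7$.

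Two smaller remarks. In (iii) you argue module-theoretically that $\Z_2^4$ must split as a sum of two $2$-dimensional irreducibles; this is correct (note that $U_1\oplus U_2$ with $U_1\not\cong U_2$ also has two maximal invariant subgroups, but still yields $\mathcal{Q}_4\times\mathcal{Q}_4$ since every $2$-dimensional $f$-irreducible gives the quandle $\mathcal{Q}_4$), whereas the paper simply observes that $Q/\gamma_Q$ has size $16$ and is subdirectly reducible and invokes the classification of latin quandles of size $16$, for which $\mathcal{Q}_4^2$ is the unique subdirectly reducible one. In (i) your derivation that $\ma_Q=\{\gamma_Q\}$ via Proposition \ref{lattices} and Corollary \ref{mu and gamma} is exactly the paper's argument.
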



\begin{proof}
Let $G=\dis(Q)$ and $\alpha=\gamma_Q$. The quandle $Q$ is $2$-step solvable, so according to \cite[Corollary 3.14]{Involutory} the prime divisors of $|G|$ are $2$ and $p$.

(i) Since $\con(Q)$ is not a chain, we can assume that $\con(Q)$ is as in Proposition \ref{lattices}(i), and then $|\Ma_Q|\geq 2$. By Lemma \ref{mu and gamma} we see that $\mu_Q=\alpha$ is the unique minimal congruence of $Q$. The quandle $Q$ is not nilpotent, so according to Lemma \ref{dis_gamma}(ii) $Z(G)=1$.

(ii) Assume that $Q/\alpha$ has size $4p$. Then the blocks of $\gamma_Q$ have size $4$, and so the blocks are isomorphic to $\mathcal{Q}_4$ (the unique latin quandle of size $4$). The quandle $Q/\alpha\cong \mathcal{Q}_4\times \aff(\Z_p,k)$ is generated by $(a,0),(0,b)$ for every $0\neq a\in \Z_2^2$ and $0\neq b\neq \Z_p$. According to Corollary \ref{embedding as quandle} we have that $\dis_{\alpha}$ embeds into $\Z_2^{2+k}$ for $0\leq k\leq 2$ and that the group $\dis^{\alpha}=\gamma_1(G)$ embeds into $(\aut{[x]_{\alpha}})^2\cong (\Z_2^2\rtimes \Z_3)^2$ which $3$-Sylow subgroups are isomorphic to $\Z_3^3$. Furthermore, $G/\gamma_1(G)=\dis(Q/\alpha)\cong \Z_2^2\times \Z_p$. We have to discuss two cases.

%
%

\begin{itemize}
    \item Let $p>3$. Then $\gamma_1(G)$ embeds into $(\Z_2^2)^2$ and $|G|=|G/\gamma_1(G)||\gamma_1(G)|=|Q/\alpha||\gamma_1(G)|=4p\cdot 2^t $ for some $2\leq t\leq 4$. Let $x\in G$ be an element of order $p$ and let $H=\langle \gamma_1(G),x\rangle$. Note that $H$ is normal in $G$ and therefore contains the $p$-Sylow subgroups of $G$. Moreover, $H$ is a characteristic subgroup, so in particular $H$ is normal in $\lmlt(Q)$. If $H$ is abelian, then its Sylow subgroups provide trivially intersecting congruences, according to Lemma \ref{Sylow}. So, $H\cong \gamma_1(G)\rtimes_\rho \Z_p$, and accordingly $p$ divides $GL_{2+k}(2)$ where $0\leq k\leq 2$. Thus, $p\in \{3,5,7\}$.

\item Let $p=3$. According to Lemma \ref{N cyclic} the $3$-Sylow of $\gamma_1(G)$ is cyclic and so isomorphic to $\Z_3^s$ for $s=0,1$, so $|\gamma_1(G)|$ divides $2^{2+k}3^s$ for $0\leq k\leq 2$ and $0\leq s\leq 1$. Thus $|G|$ divides $2^{6}3^2$.
\end{itemize}

(iii) We have that $\mu_Q=\alpha$ and it is the unique minimal congruence of $Q$ (see item (i)). The factor $Q/\alpha$ is not simple. Quandles of size $4$ and $p$ are simple, so according to Lemma \ref{meet of max is min}(i) $Q/\alpha$ has size $4p$ or $16$. According to item (ii), then $Q/\alpha$ have size $16$ and is subdirectly reducible. So $Q/\alpha$ is the unique subdirectly reducible latin quandle of size $16$, that is, $\mathcal{Q}_4^2$ (latin quandles of size $16$ are classified in \cite{RIG}).
%
\end{proof}

According to Proposition \ref{SI 16p case 4p} if $p\leq 7$ the size of the displacement group is bounded, so we can study such quandles computationally in the next section. Let us proceed with the case $p>7$.

In the following proof, we use the fact that if $Q=\aff(A,f)$ is a finite connected quandle, then $\aut{Q}\cong A\rtimes C_{\aut{A}}(f)$, so in particular if $A=\Z_2^2\rtimes \Z_p$ for an odd prime $p$, then $\aut{A}=\aut{\Z_2^2}\times \aut{\Z_p}$, consequently $C_{\aut{A}}(f_1,f_2)=C_{\aut{\Z_2^2}}(f_1)\times C_{\aut{\Z_p}}(f_2)$.

\begin{lemma}\label{lemma 4 chain}
Let $Q$ be a latin quandle of size $16p$ such that $\con(Q)$ is a chain of length $4$. Then $\ma_Q=\{ \gamma_Q\}$ and $|Q/\gamma_Q|=16$.




\end{lemma}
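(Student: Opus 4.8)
The plan is to read the two proper congruences off the chain and then pin down their sizes. Since $\con(Q)$ is the chain $0_Q<\nu_Q<\mu_Q<1_Q$, the quandle $Q$ is directly indecomposable (in a chain $\alpha\wedge\beta=0_Q$ forces $\alpha=0_Q$), subdirectly irreducible (it has the unique minimal congruence $\nu_Q$), and congruence slim by Lemma \ref{meet of max is min}(ii), with $\nu_Q\in\ma_Q$ the unique minimal and $\mu_Q\in\Ma_Q$ the unique maximal congruence. Moreover $Q$ is not nilpotent: a nilpotent latin quandle of size $16p$ is directly decomposable by Lemma \ref{16p nilpotent are abelian}, against direct indecomposability. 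Hence $Q$ is not abelian, so $\gamma_Q\neq 0_Q$, and $\gamma_Q\leq\mu_Q$ by Lemma \ref{gamma and max}; in particular $\gamma_Q\in\{\nu_Q,\mu_Q\}$.

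Next I would enumerate the admissible sizes. The factor $Q/\mu_Q$ is simple and latin, hence of prime-power order, so by Lemma \ref{meet of max is min}(i) its size lies in $\{4,16,p\}$; size $16$ is excluded because the only factor size strictly larger than $16$ and dividing $16p$ is $16p$ itself, leaving no room for $\nu_Q<\mu_Q$. Using $|Q/\mu_Q|\mid|Q/\nu_Q|$ together with the allowed factor sizes $\{4,16,p,4p\}$, the only possibilities are $(|Q/\mu_Q|,|Q/\nu_Q|)\in\{(4,16),(4,4p),(p,4p)\}$. The pair $(p,4p)$ is discarded immediately: then $Q/\nu_Q$ is latin of size $4p$ whose congruence lattice is a chain of length $3$ and whose minimal congruence has blocks of size $4$; but by the classification of latin quandles of size $4p$ recalled above, every such quandle is either affine, hence splits as $\mathcal{Q}_4\times\aff(\Z_p,f)$ with diamond (non-chain) congruence lattice, or isomorphic to some $Q(p,j)\in\LSS(p,4,0_Q)$, whose unique proper congruence has blocks of size $p$. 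Neither matches, so $(p,4p)$ does not occur.

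The genuine obstacle is to exclude $(4,4p)$. There $Q/\nu_Q$ is latin of size $4p$ with a length-$3$ chain lattice whose congruence has blocks of size $p$ and factor of size $4$, so the same classification gives $Q/\nu_Q\cong Q(p,j)$; being non-affine it is non-abelian, so $\gamma_Q\not\leq\nu_Q$ and thus $\gamma_Q=\mu_Q$, and it forces $p\equiv 1\pmod 3$. I would then analyse $G=\dis(Q)$ as in Proposition \ref{group K}. Writing $N=\dis^{\nu_Q}$, Corollary \ref{embedding as quandle}(i) embeds $N$ into $\Aut(\mathcal{Q}_4)^{4p}\cong A_4^{4p}$, so its prime divisors lie in $\{2,3\}$; since the prime divisors of $|\dis(Q)|$ are $2$ and $p$ and $p>7$, the group $N$ is a $2$-group, in fact elementary abelian (a $2$-subgroup of $A_4^{4p}$ lies in a Klein-four power), and $G/N\cong\dis(Q/\nu_Q)\cong\mathcal{G}_k$. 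The aim is to reproduce Proposition \ref{group K}, forcing $Z(G)\cong\Z_2^2$, $N\cong\Z_2^2$ and $G\cong\mathcal{G}_k\times\Z_2^2$, and then to invoke Theorem \ref{iso theo} with the enumeration of Theorem \ref{sr quandles}: every latin quandle whose displacement group is $\mathcal{G}_k\times\Z_2^2$ and whose quotient modulo the center is $Q(p,j)$ is either directly decomposable or subdirectly reducible, never subdirectly irreducible with a chain lattice, contradicting our hypothesis. An equivalent route is to exhibit a nontrivial characteristic $p$-subgroup $M$ of $G$ (so $M\trianglelefteq\lmlt(Q)$, e.g.\ inside $\gamma_2(G)$): then $\mathcal{O}_M$ has $p$-power block size and $\mathcal{O}_M\wedge\nu_Q\leq\c{M}\wedge\c{\dis_{\nu_Q}}=\c{M\cap\dis_{\nu_Q}}=0_Q$, producing a second minimal congruence and contradicting the chain.

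The hardest step is exactly this exclusion of $(4,4p)$: showing that $G/N\cong\mathcal{G}_k$ with $N$ elementary abelian of exponent $2$ cannot coexist with a single minimal congruence of block size $4$. I expect this to need the group-theoretic input of Section \ref{sec:appendix} — the analysis of faithful actions on $\Z_p^2$ and of centers that empties $\LSS(p,16,\alpha)$ in Proposition \ref{no LSS} — here used to force $Z(G)\neq 1$ and $|N|=4$. Once $(4,4p)$ and $(p,4p)$ are gone, only $(4,16)$ remains: then $|Q/\nu_Q|=16$ and $\nu_Q$ has blocks of size $p$; since latin quandles of size $16$ are affine, hence abelian, we get $\gamma_Q\leq\nu_Q$, and as $\gamma_Q\neq 0_Q$ with $\nu_Q$ minimal, $\gamma_Q=\nu_Q$. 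Therefore $\ma_Q=\{\nu_Q\}=\{\gamma_Q\}$ and $|Q/\gamma_Q|=16$, as claimed.
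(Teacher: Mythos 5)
Your reduction to the three size patterns $(|Q/\mu_Q|,|Q/\nu_Q|)\in\{(4,16),(4,4p),(p,4p)\}$ is sound, your elimination of $(p,4p)$ via the classification of latin quandles of size $4p$ is a clean (and slightly more direct) variant of what the paper does, and your endgame from $(4,16)$ is correct. The problem is that the case you yourself call ``the genuine obstacle,'' namely $(4,4p)$, is never actually closed: you state an \emph{aim} (``reproduce Proposition \ref{group K}'') and an \emph{expectation} (``I expect this to need the group-theoretic input of Section \ref{sec:appendix}''), and you sketch an ``equivalent route'' that hinges on exhibiting a nontrivial characteristic $p$-subgroup of $\dis(Q)$ that is normal in $\lmlt(Q)$ --- but you never produce such a subgroup. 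Note that $\gamma_Q=\mu_Q$ is the \emph{maximal} congruence here, so Lemma \ref{dis_gamma}(i) does not give you $\dis_{\gamma_Q}=\gamma_2(\dis(Q))$, and there is no a priori reason for $O_p(\dis(Q))$ to be nontrivial in a solvable group; the existence of the required normal $p$-subgroup is precisely the content that has to be proved. Reproducing Proposition \ref{group K} is also not straightforward, since its proof uses the diamond-shaped lattice \eqref{cong of not SI}, not a chain.

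The paper closes this case by looking at the \emph{blocks} of the maximal congruence rather than only at the factor $Q/\nu_Q$: each block $[x]_{\mu_Q}$ is a latin quandle of size $4p$ carrying the restriction of $\nu_Q$, a congruence with blocks of size $4$, so by the classification it must be the affine quandle $\mathcal{Q}_4\times\aff(\Z_p,k)$ (the non-affine $Q(p,j)$ has its unique proper congruence with blocks of size $p$). Since $Q/\mu_Q\cong\mathcal{Q}_4$ is $2$-generated, Corollary \ref{embedding as quandle}(i) embeds $\dis^{\mu_Q}$ into $\aut{[x]_{\mu_Q}}^2\cong[(\Z_2^2\times\Z_p)\rtimes(\Z_3\times\Z_{p-1})]^2$, whose $p$-Sylow subgroup is normal of order $p^2$. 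Combining $|\dis(Q/\mu_Q)|=4$ (which pushes every $p$-element into $\dis^{\mu_Q}$) with $|\dis(Q/\nu_Q)|=8p^2$ (which forces $p^2$ to divide $|\dis(Q)|$), one gets that the $p$-Sylow $N$ of $\dis(Q)$ has order exactly $p^2$ and is normal, hence characteristic, in $\dis^{\mu_Q}$, and therefore normal in $\lmlt(Q)$. Then $\mathcal{O}_N$ is a nontrivial congruence whose blocks have $p$-power size, but in the chain $0_Q<\nu_Q<\mu_Q<1_Q$ the candidates have blocks of size $4$, $4p$ and $16p$ respectively --- a contradiction. This is the argument your proposal is missing; without it the lemma is not proved.
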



\begin{proof}


    
    Let $\Ma_Q=\{\alpha\}$ and $\ma_Q=\{\beta\}$. Note that $|Q/\beta|\in \{16,4p\}$ and $\con(Q/\beta)$ is a chain of length $3$. Assume that $|Q/\beta|=4p$. In this case, we can also assume that $Q/\beta$ is not abelian (otherwise $Q/\beta$ would be directly decomposable), that is, $\gamma_Q=\alpha$. Therefore $p=1\pmod{3}$, since non-abelian latin quandles of size $16p$ exist only for such primes. We have to distinguish between three cases.
    \begin{itemize}

    
    \item Assume that $|Q/\alpha|=p$. Then $Q/\beta\in \LSS(4,p,\sigma_Q)$ and so $p=7$ according to \cite[Proposition 5.3]{LSS}. Such quandles are not latin. 
    
        \item Assume that $|Q/\alpha|=4$ and $[x]_\alpha$ is in $\LSS(4,p,\sigma_Q)$. Then, $p=7$ again, and the blocks of $\alpha$ are not latin.


                \item Assume that $|Q/\alpha|=4$ and $[x]_\alpha\cong \mathcal{Q}_4\times \aff(\Z_p,k)$. 
                We can assume that $p\neq 3$, since $p=1\pmod{3}$. Let $N$ be a $p$-Sylow subgroup of $\dis(Q)$. Since $|\dis(Q/\alpha)|=4$, hence $N$ is contained in $\dis^\alpha$ and since $|\dis(Q/\beta)|=8p^2$ we see that $p^2$ divides $|N|$. On the other hand, $\dis^\alpha$ embeds into $\aut{[x]_\alpha}^2\cong [(\Z_2^2\times \Z_p)\rtimes (\Z_3\times \Z_{p-1})]^2$ since $Q/\alpha$ is $2$-generated (again by using Corollary \ref{embedding as quandle}). The group $\aut{[x]_\alpha}^2$ has a unique normal $p$-Sylow subgroup of size $p^2$ and therefore $|N|=p^2$ and $N$ is normal in $\dis^\alpha$. Then $N$ is characteristic in $\dis^\alpha$ and is therefore normal in $\lmlt(Q)$ accordingly. Hence $\delta=\mathcal{O}_N$ is a congruence of $Q$ with blocks a power of $p$. Hence $\delta=\beta$ and so $|Q/\beta|=16$, contradiction.  
        \end{itemize}
    Therefore, $|Q/\beta|=16$ and so $Q/\beta$ is abelian (as latin quandles of size $16$ are affine). Then $\gamma_Q\leq \beta$ and since $Q$ is not abelian, then $\beta=\gamma_Q$. 
    %
%
%
%
%
\end{proof}


We can use the coset representation of connected quandles in order to conclude that the congruence lattice of latin quandles of order $16p$ is a chain of length $3$ but in a few cases for small primes.

\begin{proposition}\label{chain}
    Let $Q$ be a subdirectly irreducible latin quandle of size $16p$. 
    \begin{itemize}
        \item[(i)] If $\con(Q)$ is a chain then it has length $3$.
        \item[(ii)] If $\con(Q)$ is not a chain then $p\leq 7$.
    \end{itemize}
\end{proposition}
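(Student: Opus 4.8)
The plan is to reduce both parts to a single group-theoretic contradiction. Write $G=\dis(Q)$. Since $Q$ is subdirectly irreducible it is directly indecomposable, and it is non-abelian (an abelian latin quandle would be affine, hence directly decomposable by Lemma~\ref{16p nilpotent are abelian}), so $Q$ is non-nilpotent. By Lemma~\ref{meet of max is min}(ii) $Q$ is congruence slim, so every chain in $\con(Q)$ has length at most $4$; moreover $Q$ is not simple, since $16p$ is not a prime power whereas finite simple latin quandles are affine over $\Z_q^n$. Hence in~(i) the chain has length $3$ or $4$, and it suffices to exclude length $4$: there Lemma~\ref{lemma 4 chain} gives $\ma_Q=\{\gamma_Q\}$ and $|Q/\gamma_Q|=16$, and $Z(G)=1$ follows from $\ma_Q=\{\gamma_Q\}$ and non-nilpotency via Lemma~\ref{dis_gamma}(ii). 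For~(ii), assuming $p>7$, Proposition~\ref{SI 16p case 4p}(iii) gives $Q/\gamma_Q\cong\mathcal{Q}_4^2$ (again of size $16$), while Proposition~\ref{SI 16p case 4p}(i) gives $\ma_Q=\{\gamma_Q\}$ and $Z(G)=1$. Thus both parts reduce to the common configuration: $\ma_Q=\{\gamma_Q\}$, $Z(G)=1$, $Q/\gamma_Q$ latin of size $16$ (hence affine over $\Z_2^4$), and the blocks of $\gamma_Q$ of size $p$; I will show this is impossible.

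The key object is the Sylow $p$-subgroup $P$ of $G$ (note $G$ is a $\{2,p\}$-group: as in the proof of Proposition~\ref{SI 16p case 4p}, $Q$ is $2$-step solvable, its minimal congruence $\gamma_Q$ being abelian). Because $G/\gamma_1(G)\cong\dis(Q/\gamma_Q)\cong\Z_2^4$ is a $2$-group, $P\le\gamma_1(G)=\dis^{\gamma_Q}$. The factor $Q/\gamma_Q$ is affine over $\Z_2^4$ with $1-f$ invertible, so the $\F_2[f]$-module $\Z_2^4$ splits into at most two indecomposables (each of dimension $\ge 2$), and $Q/\gamma_Q$ is generated by at most $n\le 3$ elements. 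By Corollary~\ref{embedding as quandle}(i), $\dis^{\gamma_Q}$ then embeds into $\prod_{i=1}^{n}\aut{[a_i]}$, and by Lemma~\ref{below abelian}(i) each block satisfies $[a_i]\cong\aff(\Z_p,k_i)$, whence $\aut{[a_i]}\cong\Z_p\rtimes\Z_{p-1}$. As the Sylow $p$-subgroup of this product is the normal product of translation subgroups $\cong\Z_p^{\,n}$, the $p$-group $P$ embeds into $\Z_p^{\,n}$. Hence $P\cong\Z_p^{\,r}$ is elementary abelian with $r\le n\le 3$, and, being the unique Sylow $p$-subgroup of the characteristic subgroup $\dis^{\gamma_Q}$, it is normal in $G$.

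Finally I would invoke Schur--Zassenhaus to write $G=P\rtimes_\rho T$ with $T$ a Sylow $2$-subgroup and $\rho\colon T\to\aut{P}\cong GL_r(p)$. Since $P\le\gamma_1(G)$, the abelianisation $T/[T,T]\cong G/\gamma_1(G)\cong\Z_2^4$, so $T$ needs exactly four generators. On the other hand, every element of $Z(T)\cap\ker\rho$ is central in $G$, so $Z(T)\cap\ker\rho\le Z(G)=1$; as a nontrivial normal subgroup of a $2$-group meets its centre, this forces $\ker\rho=1$, i.e.\ $T\hookrightarrow GL_r(p)$. The contradiction is then that a $2$-subgroup of $GL_r(p)$ with $r\le 3$ can be generated by at most $r\le 3$ elements, whereas $T$ requires four. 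This bound is the main obstacle and the crux of the argument: decomposing $\F_p^{\,r}$ into irreducible $T$-submodules (each of dimension $1$ or $2$, since $r\le 3$ and irreducible $\F_p$-representations of a $2$-group have $2$-power dimension), an irreducible $2$-subgroup of $GL_2(p)$ lies in the normaliser of a Singer cycle and hence has a cyclic subgroup of index at most $2$, while $T$ permuting or coinciding on coordinate summands only identifies generators in the abelianisation; a short case analysis then gives minimal number of generators at most $r$. Granting this, the common configuration is excluded, so the chain in~(i) has length $3$ and the case $p>7$ in~(ii) cannot occur, giving $p\le 7$. Note that the hypothesis $Z(G)=1$ is precisely what makes $\ker\rho$ trivial; in the subdirectly reducible case one has $Z(G)\cong\Z_2^2\neq 1$, and there the analogous $T\cong\mathcal{Q}_8\times\Z_2^2$ still satisfies $T/[T,T]\cong\Z_2^4$ but acts with nontrivial kernel, so no contradiction arises.
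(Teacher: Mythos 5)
Your reduction of both parts to the single configuration $\ma_Q=\{\gamma_Q\}$, $Z(G)=1$, $Q/\gamma_Q$ affine over $\Z_2^4$, blocks of size $p$, is correct and matches the paper, as is the identification of the normal Sylow $p$-subgroup $P\cong\Z_p^r$ with $r\le 3$, the splitting $G=P\rtimes_\rho T$ with $\rho$ faithful, and the observation that $T$ needs at least four generators because $T/[T,T]$ surjects onto $G/\gamma_1(G)\cong\Z_2^4$. The problem is the step you yourself flag as the crux: the claim that a $2$-subgroup of $GL_r(p)$ with $r\le 3$ is generated by at most $r$ elements is \emph{false}. Take $p\equiv 1\pmod 4$ and let $2^a$ ($a\ge 2$) be the $2$-part of $p-1$, so that $\syl_2(GL_3(p))\cong(\Z_{2^a}\wr\Z_2)\times\Z_{2^a}$ acting monomially. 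Let $H_0=\setof{(x,y,z)\in\Z_{2^a}^3}{x\equiv y\pmod 2}$ and $H=H_0\rtimes\langle\sigma\rangle$ with $\sigma$ the coordinate swap. One checks that every square in $H$ and every commutator lies in $2H_0$, so $\Phi(H)=2H_0$ and $H/\Phi(H)\cong\Z_2^4$; hence $H$ is a $2$-subgroup of $GL_3(p)$ requiring four generators. Already for $r=2$ your supporting sub-claim fails: the irreducible imprimitive subgroup $\setof{(x,y)}{x\equiv y\pmod 2}\rtimes\langle\sigma\rangle\le\Z_{2^a}\wr\Z_2\le GL_2(p)$ has exponent $2^a$ and order $2^{2a}$, so it has no cyclic subgroup of index $2$ (and indeed needs three generators); such groups do not normalize a Singer cycle but rather the split torus. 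So the advertised contradiction simply does not arise from the data you have extracted, and the proof does not close.

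What is missing is a further constraint on $T$ that your argument never uses. The paper gets it from Lemma \ref{N cyclic} applied to $N=\dis^{\gamma_Q}=\gamma_1(G)$: since the factor $Q/\gamma_Q$ is simple... no, rather, since the blocks of $\gamma_Q$ are simple affine of order $p$ coprime to $2$, the $2$-part of $\gamma_1(G)$ is cyclic, and combined with Lemma \ref{derived si elemetary} it is $\Z_2^s$ with $s\in\{0,1\}$; in particular $[T,T]$ has order at most $2$. The paper then splits on $s$: for $s=0$ one gets $T\cong\Z_2^4$ embedded faithfully in $GL_t(p)$ with $t\le 3$, which is impossible because commuting diagonalizable involutions are simultaneously diagonalizable (Corollary \ref{ro faithful on Z_p}); for $s=1$ it identifies $K=G/\gamma_2(G)$ with one of two explicit groups of order $32$ via the classification of connected quandles of order $32$, forces $t=3$, and derives the contradiction from Corollary \ref{center = -1}, which makes $t$ even. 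If you want to salvage your generator-counting route, you would at minimum have to feed in the cyclicity of the $2$-part of $\gamma_1(G)$ (equivalently, $|[T,T]|\le 2$) and redo the analysis of $2$-subgroups of $GL_3(p)$ under that extra hypothesis; as written, the argument has a genuine gap at its decisive step.
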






\begin{proof}
Let $G=\dis(Q)$ and $\alpha=\gamma_Q$. Assume that $\con(Q)$ is not a chain and $p>7$ or that $\con(Q)$ is a chain of length $4$. According to Lemma \ref{SI 16p case 4p} and Lemma \ref{lemma 4 chain}, $\alpha$ is the unique minimal congruence of $Q$, the factor $Q/\alpha$ is $\mathcal{Q}_4^2$ and then $G/\gamma_1(G)\cong \Z_2^4$.

Moreover, in both cases, $Q$ is not nilpotent, so $\gamma_2(G)=\dis_\alpha$ and $Z(G)=1$, according to Lemma \ref{dis_gamma}. The blocks of $\alpha$ have size $p$, the quandle $Q/\alpha$ is generated by $3$ elements, so the group $\dis_\alpha$ embeds into $\Z_p^3$ according to Corollary \ref{embedding as quandle}(ii). Moreover $|\dis_\alpha|\neq p$ since $\alpha$ is not central (see Lemma \ref{central 2}).

Let $Q=\mathcal{Q}(G,G_x,f)$ where $f=\widehat{L_x}$ for some $x\in Q$. According to \cite[Lemma 1.10]{Involutory} the principal quandle $Q'=\mathcal{Q}(G/\gamma_2(G),f_{\gamma_2(G)})$ is a connected cover of $Q/\gamma_Q$, $K=G/\gamma_2(G)\cong \dis(Q')$ and it has size a power of $2$. In particular the group $K=G/\gamma_2(G)$ is a $2$-step nilpotent group $2$-group and so $G\cong \gamma_2(G)\rtimes_\rho K$. Moreover $K/\gamma_1(K)\cong G/\gamma_1(G)$ is an elementary $2$-group and so by Lemma \ref{derived si elemetary} $\gamma_1(K)$ is an elementary abelian $2$-group. Moreover $\gamma_1(G)=\gamma_2(G)\rtimes_\rho \gamma_1(K)$. The $2$-Sylow of $\dis^{\gamma_Q}=\gamma_1(G)$ is cyclic according to Lemma \ref{N cyclic}, so the $2$-Sylow of $\gamma_1(G)$ is isomorphic to $\Z_2^s$ for $s=0,1$. 

We have to distinguish between two cases.


\begin{itemize}
    \item Assume that $s=0$. Then $\gamma_1(G)=\gamma_2(G)$ and so $G=\Z_p^t\rtimes_\rho \Z_2^4$ for $t\leq 3$. By Lemma \ref{ro faithful on Z_p}, $\rho$ is not faithful since $t\leq 3\leq 4$, leading to a contradiction.

\item Assume that $s=1$. The quandle $Q'$ is a connected quandle of size $32$ with factor $Q'/\gamma_{Q'}$ isomorphic $\mathcal{Q}_4^2$, having $7$ congruences. Then, looking at Table \ref{32}, we have that $Q'$ is {\tt SmallQuandle(32,j)} for $j=2,3$ and accordingly $K=G/\gamma_2(G)=\dis(Q')$ is {\tt SmallGroup(32,i)} for $i=47,49$. So $G=\Z_p^t\rtimes_\rho K$ for $t=2,3$. Both groups have a subgroup isomorphic to $\Z_2^3$, so we have $t=3$ by Corollary \ref{ro faithful on Z_p}. According to Corollary \ref{center of dis^alpha} then $Z=Z(\gamma_1(G))=Z(\dis^{\alpha})=1$ since $\dis^\alpha$ is not a group of size a power of a prime. On the other Corollary \ref{center = -1} implies that $t$ is even, a contradiction. \qedhere
%
\end{itemize}

%
%
%
\end{proof}

Finally, we deal with quandles with congruence lattice given by a chain of length $3$ and we obtain further constrains of the values of $p$ and on the structure of the displacement group.
\begin{proposition}\label{LSS 16p}
Let $Q$ be a latin quandle of size $16p$ such that $\con(Q)$ is a chain of length $3$. Then: 
\begin{itemize}
\item[(i)]  $Z(\dis(Q))=1$.

\item[(ii)] $\dis(Q)=\Z_2^{4+k}\rtimes_\rho \Z_p$ for $k\leq 4$, and $\rho$ is a faithful action.

\item[(iii)] $|Q/\gamma_Q|=p\in \{3,5,7,17,31,127\}$.




\end{itemize}

\end{proposition}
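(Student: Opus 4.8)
The plan is to identify the middle element of the length-three chain as $\gamma_Q$, read off the structure of $G=\dis(Q)$ from Lemma~\ref{dis for 3-chains}, and then extract the admissible primes from the order of a general linear group over $\F_2$. Let $\alpha$ denote the unique proper congruence of $Q$. A nontrivial direct product carries two complementary congruences, so a chain of length $3$ makes $Q$ directly indecomposable; moreover, by Lemma~\ref{16p nilpotent are abelian} a nilpotent $Q$ would be abelian, hence affine with coprime $2$- and $p$-parts and therefore directly decomposable. Thus $Q$ is not nilpotent. Since $Q/\alpha$ is simple latin it is abelian, so $\gamma_Q\leq\alpha$, while $\gamma_Q\neq 0_Q$ because $Q$ is not abelian. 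Hence $\gamma_Q=\alpha$ and $\ma_Q=\{\gamma_Q\}$, and Lemma~\ref{dis_gamma}(ii) gives $Z(G)=1$, which is~(i).

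Next I would pin down the size of the factor. By Lemma~\ref{meet of max is min}(i) and the fact that a simple latin quandle has prime-power order, $|Q/\alpha|\in\{4,16,p\}$. As $Q$ is solvable and $\alpha$ is minimal, Lemma~\ref{minimal cong}(iii) shows $\alpha$ is abelian, and then Lemma~\ref{minimal cong}(ii) forces its blocks to have prime-power size; since the blocks have size $16p/|Q/\alpha|$, the case $|Q/\alpha|=4$ (blocks of size $4p$, not a prime power) is impossible. If $|Q/\alpha|=16$ then $Q/\alpha$ must itself be simple, for otherwise a proper congruence of $Q/\alpha$ would lift to a fourth congruence of $Q$; thus $Q$ is a subdirectly irreducible non-affine quandle with strictly simple factor of size $16$ and strictly simple blocks of size $p$, i.e. $Q\in\LSS(p,16,\sigma_Q)$, contradicting Proposition~\ref{no LSS}. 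Therefore $|Q/\gamma_Q|=p$ and the blocks of $\gamma_Q$ have size $16$.

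With the factor identified I would turn to the structure of $G$. The blocks are latin of size $16$, hence affine over $\Z_2^4$ with $\dis([a])\cong\Z_2^4$, while $Q/\gamma_Q\cong\aff(\Z_p,k)$ is generated by two elements. By Lemma~\ref{dis_gamma}(i) we have $\dis_{\gamma_Q}=\gamma_2(G)$, and Corollary~\ref{embedding as quandle}(ii) shows that $\gamma_2(G)$ embeds in $(\Z_2^4)^2$ and maps onto $\Z_2^4$; being elementary abelian by Lemma~\ref{minimal cong}(ii), it is $\Z_2^{4+k}$ with $0\leq k\leq 4$. Now I apply Lemma~\ref{dis for 3-chains}, legitimate since $Q$ is connected, faithful, solvable and non-nilpotent with a chain of length $3$. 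Here $G/\gamma_1(G)\cong\dis(Q/\gamma_Q)\cong\Z_p$ has order $p$, and the projection $G\to G/\gamma_2(G)$ induces a surjection of $G/\gamma_1(G)$ onto $K/\gamma_1(K)$; since a special $p$-group $K$ is nonabelian and satisfies $|K/\gamma_1(K)|\geq p^2$, alternative~(ii) of that lemma is excluded. Hence alternative~(i) holds: $G=\Z_2^{4+k}\rtimes_\rho\Z_p$ with $\rho$ faithful, which is~(ii).

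Finally, faithfulness of $\rho$ embeds $\Z_p$ into $GL_{4+k}(\F_2)$ with $4\leq 4+k\leq 8$, so $p$ divides the odd part $\prod_{j=1}^{4+k}(2^j-1)$ of $|GL_{4+k}(\F_2)|$, equivalently $\mathrm{ord}_p(2)\leq 8$. Factoring $2^j-1$ for $1\leq j\leq 8$ yields exactly the prime factors $3,5,7,17,31,127$, which together with $|Q/\gamma_Q|=p$ gives~(iii). The step I expect to be most delicate is the exclusion of $|Q/\gamma_Q|=16$: it rests on recognizing $Q$ as a member of $\LSS(p,16,\sigma_Q)$ and invoking the nontrivial emptiness of that class, the subtle point being the verification that every proper subquandle of $Q$ is strictly simple. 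Ruling out the special-group branch of Lemma~\ref{dis for 3-chains} and the enumeration of the prime divisors of $|GL_m(\F_2)|$ are then routine.
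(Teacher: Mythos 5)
Your proof is correct and follows essentially the same route as the paper: the same case analysis on $|Q/\gamma_Q|$ (ruling out $4$ by prime-power block sizes and $16$ by Proposition \ref{no LSS}), the same use of Corollary \ref{embedding as quandle} and Lemma \ref{dis for 3-chains} to get $\dis(Q)=\Z_2^{4+k}\rtimes_\rho\Z_p$, and the same enumeration of odd primes dividing $|GL_{4+k}(2)|$. The only cosmetic difference is in discarding the special-group alternative of Lemma \ref{dis for 3-chains}: the paper shows $\gamma_1(G)=\gamma_2(G)$ directly via Sims' commutator generating set for $\gamma_1(G)/\gamma_2(G)$, while you observe that a nonabelian $q$-group cannot have cyclic Frattini quotient --- both arguments rest on $G/\gamma_1(G)\cong\Z_p$ being cyclic.
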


\begin{proof}
%

Let $G=\dis(Q)$ and $\alpha=\gamma_Q$. Note that $Q$ is solvable, $\alpha\in \ma_Q\cap\Ma_Q$ and so it is abelian. Therefore, both $Q/\alpha$ and $[x]_\alpha$ have size a power of a prime.

(i) Clearly $\alpha$ is the only proper congruence, and so it is minimal. Therefore, $Z(G)=1$ according to Lemma \ref{dis_gamma}.

(ii) We separate the discussion into some cases.

\begin{itemize}
    \item Assume that $|Q/\alpha|$ has size $4$. Then, the blocks of $\alpha$ have size $4p$, which is a contradiction. 


\item If $|Q/\alpha|=16$, then the blocks of $\alpha$ are simple, so $Q\in \LSS(p,16,\sigma_Q)$. But according to Proposition \ref{no LSS}, the class $\LSS(p,16,\sigma_Q)$ is empty.

\item Assume that $|Q/\alpha|$ has size $p$. The blocks of $\alpha$ are affine over $\Z_2^4$. 
The group $G/\gamma_1(G)$ is cyclic and $\gamma_1(G)/\gamma_2(G)$ is generated by the elements $\setof{[x,y]\gamma_2(G)}{x,y\in X}$, where $X$ generates $G/\gamma_1(G)$ (see \cite[Proposition 9.2.5]{Sims}). We conclude that $\gamma_2(G)=\gamma_1(G)$. Since $\dis_{\alpha}$ embeds into $\Z_2^{4+k}$ for $k\leq 4$ (see Corollary \ref{embedding as quandle}), according to Lemma \ref{dis for 3-chains}, we have that $G\cong \Z_2^{4+k}\rtimes_\rho \Z_p$ and $\rho$ is faithful .
\end{itemize}

(iii) The prime $p$ divides $|GL_2(4+k)|$ for $0\leq k\leq 4$, therefore $p\in \{3,5,7,17,31,127\}$. %
%
%
\end{proof}

\subsection{Computational Results}\label{computational}

To classify all subdirectly irreducible latin quandles of size $16p$, we can proceed computationally. Latin quandles are, in particular, connected. Consequently, they admit a minimal coset representation over their displacement group.
    By Lemma~\ref{SI 16p case 4p} and Lemma~\ref{LSS 16p}, such quandles are coset quandles over only finitely many groups. For these reasons, the classification problem reduces to a finite computation. Moreover, once we identify the suitable groups, we can easily obtain isomorphism classes using Theorem \ref{iso theo}.
    
In this section, we describe the algorithm we performed in GAP in order to deal with the cases described above.

    Let $Q$ be a latin quandle of size $16p$. Then $Q$ has a minimal coset representation as $\mathcal{Q}(G,Fix(f),f)$ where $G\cong \dis(Q)$.

\begin{enumerate}
    \item 
    If the congruence lattice of $Q$ not a chain, then, by Lemma~\ref{SI 16p case 4p}, $Q$ is a coset quandle over a centerless group whose order divides $2^6 p$ for $p \neq 3$, or $2^6 \cdot 9$ when $p = 3$ (note that $16p$ must divide $|\dis(Q)|$ in each case since $|\dis(Q)|=|Q||\dis(Q)_x|$ for every $x\in Q$). 
    If the congruence lattice of $Q$ is a chain, then by Lemma~\ref{LSS 16p}, $Q$ is a coset quandles over a group of the form $G_{p,k} = \mathbb{Z}_2^{4+k} \rtimes_\rho \mathbb{Z}_p$, where $0 \leq k \leq 4$ and $p \in \{3, 5, 7, 17, 31, 127\}$, with $Z(G_{p,k}) = 1$. This leads to a similar investigation of potential groups of the correct order, satisfying the appropriate structural conditions.

    \item The second step is to construct all the coset quandles of the form $\mathcal{Q}(G, \fix(f), f)$ over the groups  identified above with the right properties.
    
    \item We proceed by filtering by isomorphism classes using Theorem \ref{iso theo}. 
    \end{enumerate}
    
   The algorithm computes that there are only two such quandles, namely $Q_p=\mathcal{Q}(G_p,Fix(f_p),f_p)$ for $p=3,5$ where the presentations of the groups used for the coset representation are
   \begin{align*}
       G_3 &= \texttt{SmallGroup(48,50)} = \langle a, b, c \ \colon\ 
        a^{3} ,\,
        c^{2} ,\,
        b^{2} ,\,
        (c b)^{2} ,\,
        (b a)^{3} ,\,
        (c a)^{3} ,\,
        (a^{-1} c a b)^{2} ,\,
        (c a^{-1} b a)^{2} 
        \rangle \cong \Z_2^4\rtimes \Z_3,\\ G_5 &= \texttt{SmallGroup(80,49)}=\langle a, b \ \colon \ 
        a^{5} ,\,
        b^{2} ,\,
        (b a^{-1} b a)^{2} ,\, 
        (b a^{-1})^{5} ,\, 
        (b a^{-2} b a^{2})^{2} 
        \rangle\cong \Z_2^4\rtimes \Z_5,
        \end{align*}
    and the automorphisms are defined as
\[
    \begin{aligned}
    f_3 = \begin{cases}
    a \mapsto a^2ba^{-1}ba \\
    b \mapsto ba^{-1}ca \\
    c \mapsto bc
    \end{cases}
    \end{aligned}
    \hspace{2cm}
    \begin{aligned}
    f_5 = \begin{cases}
    a \mapsto a^{2} (a^{2} b)^{2} a^{-3} b a \\
    b \mapsto b a^{2} b a^{-2}
    \end{cases}
    \end{aligned}
    \]  
   Both of these quandles are subdirectly irreducible, and their congruence lattice is a 3-element chain.

\section{Appendix}  \label{sec:appendix}

\subsection{Groups theoretical results}

In this subsection we collect some results we are going to use to describe the displacement groups of latin quandles of size $16p$. In particular, we are going to prove some results on actions of $2$-groups on elementary abelian $p$-groups.


\begin{lemma}\label{derived si elemetary}
    Let $K$ be a $p$-group with nilpotency class $2$. If $K/\gamma_1(K)$ is elementary $p$-abelian group then $\gamma_1(K)$ is elementary $p$-abelian group.
\end{lemma}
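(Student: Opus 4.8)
The plan is to show that $\gamma_1(K)$ is elementary abelian $p$-group by establishing two things: that it is abelian, and that every non-trivial element has order $p$. Since $K$ has nilpotency class $2$, the commutator subgroup $\gamma_1(K) = [K,K]$ is contained in the center $Z(K)$, hence $\gamma_1(K)$ is automatically abelian. So the entire content reduces to proving that every element of $\gamma_1(K)$ has order dividing $p$.

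First I would use the fact that $\gamma_1(K)$ is generated by commutators $[x,y]$ for $x,y \in K$. Because $K$ has class $2$, the commutator map is bilinear in the following sense: the identities $[xy,z] = [x,z][y,z]$ and $[x,yz] = [x,y][x,z]$ hold, since all commutators lie in the central subgroup $\gamma_1(K)$ and therefore commute with everything. Consequently, for any $x,y$ we have $[x^p,y] = [x,y]^p$. The strategy is to relate a $p$-th power of a commutator to a commutator of $p$-th powers, and then exploit the hypothesis that $K/\gamma_1(K)$ has exponent $p$.

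The key step is the following: by the assumption that $K/\gamma_1(K)$ is elementary abelian, for every $x \in K$ we have $x^p \in \gamma_1(K) \leq Z(K)$. Therefore $x^p$ is central, which gives $[x^p, y] = 1$ for all $y$. Combining this with the bilinearity identity $[x^p,y] = [x,y]^p$ yields $[x,y]^p = 1$ for all generators $[x,y]$ of $\gamma_1(K)$. Since $\gamma_1(K)$ is abelian and generated by elements of order dividing $p$, every element of $\gamma_1(K)$ has order dividing $p$. Hence $\gamma_1(K)$ is an elementary abelian $p$-group, as required.

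The one point requiring a little care — the part I expect to be the main (albeit minor) obstacle — is justifying the identity $[x^p,y] = [x,y]^p$ rigorously in a class-$2$ group. The standard collection formula introduces correction terms involving higher commutators $[x,y,\ldots]$, but all such terms vanish here precisely because $\gamma_2(K) = [K,[K,K]] = 1$ by the class-$2$ hypothesis, so the naive bilinear expansion is exact. Thus the whole argument rests on the interplay between the two hypotheses: class $2$ makes the commutator bilinear and central, while exponent-$p$ of the quotient forces $p$-th powers into the center, and together these force the commutators themselves to have order $p$.
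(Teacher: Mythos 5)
Your proof is correct and follows essentially the same route as the paper: both arguments rest on the bilinearity of the commutator in a class-$2$ group together with the observation that the exponent-$p$ hypothesis on $K/\gamma_1(K)$ forces $p$-th powers into $Z(K)$, so that $[x,y]^p=[x^p,y]=1$. The paper merely packages this by noting that $[x,-]$ factors through $K/Z(K)$, which is a quotient of $K/\gamma_1(K)$ and hence elementary abelian — an equivalent formulation of your step that $x^p$ is central.
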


\begin{proof}
    Let $x\in G$. The map $[x,-]:y\longrightarrow [x,y]$ is a group homomorphism with values in $\gamma_1(K)$. Moreover, the map factor through the center of $K$. Thus, we have a group homomorphism from $K/Z(K)$ to $\gamma_1(K)$. Since $\gamma_1(K)\leq Z(K)$ then we also have a group homomorphism from $K/\gamma_1(K)$ onto $K/Z(K)$. Hence $K/Z(K)$ is an elementary $p$-abelian group. Therefore, every element of the form $[x,y]$ has order $p$ and so $\gamma_1(K)$ is an elementary $p$-abelian group.
\end{proof}

\begin{lemma}\label{no center}
    Let $H$ be an abelian group, $K$ be a nilpotent group, and $G=H\rtimes_{\rho} K$. If $Z(G)=1$ then $\ker{\rho}=1$.
\end{lemma}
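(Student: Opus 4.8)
The plan is to prove the contrapositive: assuming $\ker{\rho}\neq 1$, I would exhibit a non-trivial central element of $G$, contradicting $Z(G)=1$. The whole argument rests on pushing a non-trivial element of the kernel into the center of $K$, and then checking by direct computation that the corresponding element of the semidirect product is central.

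First I would observe that $N=\ker{\rho}$ is a normal subgroup of $K$, being the kernel of the homomorphism $\rho\colon K\to \aut{H}$. Since $K$ is nilpotent and $N$ is a non-trivial normal subgroup, the standard fact that every non-trivial normal subgroup of a nilpotent group meets the center non-trivially produces an element $1\neq k\in N\cap Z(K)$. Next I would verify that the element $(1,k)\in G$ is central: for any $(h,k')\in G$, using the semidirect product multiplication $(h_1,k_1)\cdot(h_2,k_2)=(h_1\rho_{k_1}(h_2),k_1k_2)$ together with $k\in\ker{\rho}$ (so that $\rho_k=\mathrm{id}_H$) and $k\in Z(K)$ (so that $kk'=k'k$), both products $(1,k)\cdot(h,k')$ and $(h,k')\cdot(1,k)$ collapse to $(h,kk')$. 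Hence $(1,k)$ commutes with every element of $G$, and since $k\neq 1$ it is a non-trivial central element, so $Z(G)\neq 1$, completing the contrapositive.

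The point that makes the argument work — and the step one must not skip — is the passage to $Z(K)$. For a generic $k\in\ker{\rho}$ one finds $(1,k)\cdot(h,k')=(h,kk')$ but $(h,k')\cdot(1,k)=(h,k'k)$, and these agree only when $k$ commutes with $k'$; faithfulness of $H$ being abelian handles the first coordinate, but the second coordinate forces the requirement $k\in Z(K)$. Nilpotency of $K$ is exactly what guarantees a non-trivial element lying simultaneously in $\ker{\rho}$ and in $Z(K)$, and this is the only place where the nilpotency hypothesis enters. I expect no further obstacle: the computation is routine once the right $k$ is selected, so the entire difficulty is concentrated in recognizing that one should intersect the kernel with the center rather than argue with an arbitrary kernel element.
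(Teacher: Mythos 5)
Your proof is correct and follows essentially the same route as the paper: the paper's two-line argument is precisely the observation that $\ker{\rho}\cap Z(K)\leq Z(G)$ (your direct computation with $(1,k)$) combined with the fact that a non-trivial normal subgroup of a nilpotent group meets the center non-trivially. One small remark: the first coordinate works because $\rho_k=\mathrm{id}_H$ and $\rho_{k'}(1)=1$, not because $H$ is abelian --- that hypothesis is in fact never needed here.
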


\begin{proof}



We have $\ker{\rho}\cap Z(K)\leq Z(G)=1$. Then $\ker{\rho}=1$ since $K$ is nilpotent.
\end{proof}

Let us show that matrices of order $2$ in $GL_2(p)$ for $p>2$ are diagonalizable.

\begin{lemma}\label{diag nxn}
    Let $A\in GL_n(p)$ of order $k$ and $p$ be an odd prime. Then: 
    
    \begin{itemize}
        \item[(i)] the $A$-irreducible representations has dimension $\leq k$.

 \item[(ii)]    If $k=2$ then $A$ is diagonalizable.
    \end{itemize}
\end{lemma}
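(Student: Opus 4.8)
The plan is to view the statement through the $\F_p$-representation theory of the cyclic group $\langle A\rangle$, equivalently through the module structure on $\F_p^n$ over the principal ideal domain $\F_p[x]$ with $x$ acting as $A$. Since $A$ has order $k$, it is annihilated by $x^k-1$, so the minimal polynomial $\mu_A$ of $A$ divides $x^k-1$; this single observation drives both parts.

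For (i), I would recall that over the PID $\F_p[x]$ every irreducible $\F_p[\langle A\rangle]$-module is cyclic of the form $\F_p[x]/(q(x))$, where $q$ is an irreducible factor of $\mu_A$, and that its dimension over $\F_p$ equals $\deg q$. Since $\mu_A$ divides $x^k-1$, every such $q$ divides $x^k-1$, whence $\deg q \le \deg(x^k-1)=k$. This yields the bound immediately, and note that no coprimality hypothesis between $p$ and $k$ is needed.

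For (ii), I would specialize to $k=2$, so that $A$ satisfies $x^2-1=(x-1)(x+1)$. Because $p$ is odd we have $1\neq -1$ in $\F_p$, so $x^2-1$ splits into \emph{distinct} linear factors over $\F_p$. The minimal polynomial $\mu_A$ divides $x^2-1$ and is therefore a product of distinct linear factors over $\F_p$, which is exactly the standard criterion for diagonalizability; hence $A$ is diagonalizable over $\F_p$. Equivalently, one may argue that $\gcd(2,p)=1$ makes $\F_p[\langle A\rangle]$ semisimple by Maschke's theorem, so $\F_p^n$ decomposes as a direct sum of irreducibles, each of dimension $\deg q$ with $q\in\{x-1,x+1\}$ linear, i.e.\ one-dimensional.

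I do not expect a serious obstacle: both parts reduce to standard facts about finitely generated modules over $\F_p[x]$. The only points requiring care are interpreting ``$A$-irreducible representation'' as an irreducible $\F_p[\langle A\rangle]$-submodule of the natural module, and invoking the hypothesis that $p$ is odd precisely to guarantee $-1\neq 1$ (so that $x^2-1$ is separable) in part (ii).
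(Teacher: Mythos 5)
Your proof is correct, but it follows a genuinely different route from the paper's. For (i), the paper argues elementarily: for any $v$ the orbit $\{v, A(v), \ldots, A^{k-1}(v)\}$ spans an $A$-invariant subspace of dimension at most $k$, so an irreducible subrepresentation has dimension at most $k$; you instead invoke the classification of simple $\F_p[x]$-modules as $\F_p[x]/(q)$ with $q$ an irreducible divisor of $\mu_A \mid x^k-1$. Both are valid (and, as you note, neither needs coprimality of $p$ and $k$), with the paper's version being more elementary and yours slightly more machinery-laden. The real divergence is in (ii): the paper applies Maschke's theorem to decompose $\Z_p^n$ into irreducibles, uses (i) to see each has dimension $1$ or $2$, and then cites an external result (Lemma 7.1 of \cite{LSS}) to diagonalize the $2$-dimensional blocks; your argument that $\mu_A$ divides $x^2-1=(x-1)(x+1)$, which is separable and split over $\F_p$ since $p$ is odd, gives diagonalizability directly from the standard minimal-polynomial criterion. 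Your version of (ii) is cleaner and self-contained: it bypasses Maschke, bypasses the citation, and makes it transparent that the only input from the hypothesis ``$p$ odd'' is $1\neq -1$ in $\F_p$. The one thing it does not buy you is the decomposition into irreducibles itself, which the paper's Maschke-based argument sets up and which is reused implicitly elsewhere in the appendix; but for the statement as given, your proof is complete.
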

\begin{proof}

(i)    Let $v\in \Z_p^n$. Then $\setof{f^i(v)}{i\in \mathbb{Z}}=\{v, f(v),\ldots,f^{k-1}(v)\}$ generates an $A$-invariant subspace of dimension less than or equal to $k$. So, if $V$ is irreducible, then $V$ has dimension less than or equal to $k$.

(ii) The order of $A$ and of $\Z_p^n$ are relatively prime, so according to Maschke theorem, the space $V=\Z_p^n$ decomposes as $V=\oplus_i V_i$ where $V_i$ are irreducible representations of $A$. According to (i), the irreducible representations of $A$ have dimension $1$ or $2$. Let $V_j$ be of dimension $2$ and $B=A|_{V_j}$. According to \cite[Lemma 7.1]{LSS} $B$ is diagonalizable. Thus, also $A$ is diagonalizable.
%
\end{proof}

\begin{corollary} \label{ro faithful on Z_p}
    Let $p$ be an odd prime, $G=\Z_p^n\rtimes_\rho \Z_2^m$. If $\rho$ is faithful then $m\leq n$.
\end{corollary}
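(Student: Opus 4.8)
The plan is to reinterpret the faithfulness of $\rho$ as an embedding of $\Z_2^m$ into a group of diagonal $\pm 1$ matrices, and then compare orders. First I would note that $\rho\colon \Z_2^m\longrightarrow \aut{\Z_p^n}=GL_n(p)$ is injective by hypothesis, so $\Z_2^m\cong \img{\rho}$ is an elementary abelian $2$-subgroup of $GL_n(p)$ of order $2^m$, and every non-identity element $\rho_g$ has order exactly $2$.

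Next, since $p$ is odd, Lemma \ref{diag nxn}(ii) applies to each such $\rho_g$, so every element of $\img{\rho}$ is diagonalizable over $\F_p$. The group $\Z_2^m$ is abelian, hence the matrices $\setof{\rho_g}{g\in \Z_2^m}$ pairwise commute. A commuting family of diagonalizable operators over a field is simultaneously diagonalizable, so there exists $P\in GL_n(p)$ with $P\rho_g P^{-1}$ diagonal for all $g$. I would then observe that the diagonal entries of each $P\rho_g P^{-1}$ are eigenvalues satisfying $\lambda^2=1$ in $\F_p$, hence lie in $\{1,-1\}$ (distinct because $p$ is odd). Consequently $P\,\img{\rho}\,P^{-1}$ is contained in the subgroup $D\leq GL_n(p)$ of diagonal matrices with entries in $\{\pm 1\}$, and $D\cong \Z_2^n$.

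Finally, conjugation by $P$ is an isomorphism of groups, so $\Z_2^m\cong \img{\rho}\cong P\,\img{\rho}\,P^{-1}\leq D\cong \Z_2^n$. Comparing orders gives $2^m\leq 2^n$, i.e.\ $m\leq n$, as required.

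The only step that needs care is the simultaneous diagonalization: one must be sure it takes place over $\F_p$ itself rather than an extension. This is not an obstacle here, because the relevant eigenvalues are precisely $\pm 1\in \F_p$, so each $\rho_g$ already splits and is diagonalizable over the ground field $\F_p$, and the standard commuting-diagonalizable theorem then produces a single $P\in GL_n(p)$. Everything else is bookkeeping with orders of finite abelian $2$-groups.
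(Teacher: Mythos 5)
Your proof is correct and follows essentially the same route as the paper: invoke Lemma \ref{diag nxn}(ii) to see that the image of $\rho$ lands (up to conjugation) in the group of diagonal matrices with entries $\pm 1$, which is isomorphic to $\Z_2^n$, and compare orders. In fact you are slightly more careful than the paper, which asserts the containment directly without spelling out the simultaneous diagonalization of the commuting family $\img{\rho}$ that your argument makes explicit.
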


\begin{proof}
    According to Lemma \ref{diag nxn} the image of $H$ under $\rho$ is a subgroup of the diagonal matrices of order $2$ (see Lemma \ref{diag nxn}(ii)), which has size $2^n$. Thus, $m\leq n$. 
\end{proof}

\begin{corollary}\label{center = -1}
Let $p$ be an odd prime, $K$ be a $2$-group of nilpotency class $2$ and $G=\Z_p^n\rtimes_\rho K$ where $\rho$ is faithful. If $\gamma_1(K)=\langle z\rangle \cong \Z_2$ and $Z(\Z_p^n\rtimes \gamma_1(K))= 1$ then $\rho_z=-Id_n$ and $n$ is even.    
\end{corollary}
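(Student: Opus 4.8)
The plan is to first pin down the action of $z$ and then use a determinant argument for the parity of $n$. Since $\gamma_1(K)=\langle z\rangle\cong\Z_2$, the element $z$ is an involution, and faithfulness of $\rho$ forces $\rho_z\neq Id_n$; thus $\rho_z$ is a nontrivial involution of $GL_n(p)$ with $p$ odd. By Lemma \ref{diag nxn}(ii) it is diagonalizable, and its minimal polynomial divides $(x-1)(x+1)$, so its eigenvalues lie in $\{1,-1\}$. I would decompose $\Z_p^n=V_+\oplus V_-$ into the $(+1)$- and $(-1)$-eigenspaces of $\rho_z$, so that $V_+=\mathrm{Fix}(\rho_z)$.

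Next I would compute the center of the subgroup $H=\Z_p^n\rtimes_\rho\langle z\rangle$ directly from the semidirect-product multiplication. Writing elements as $(v,z^i)$, an element $(v,1)$ is central if and only if $v=\rho_z(v)$, i.e. $v\in V_+$; and an element $(v,z)$ with nontrivial $z$-part would force $\rho_z(w)=w$ for all $w$, contradicting $\rho_z\neq Id_n$. Hence $Z(H)\cong V_+=\mathrm{Fix}(\rho_z)$. The hypothesis $Z(\Z_p^n\rtimes\gamma_1(K))=1$ then gives $V_+=0$, so every eigenvalue of $\rho_z$ equals $-1$, that is $\rho_z=-Id_n$. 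This establishes the first claim.

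For the parity of $n$, the key observation is that $z\in\gamma_1(K)=[K,K]$, while $\rho$ is a group homomorphism $K\to GL_n(p)$. Composing with the determinant, $\det\circ\rho\colon K\to\F_p^\times$ is a homomorphism into an abelian group, hence factors through $K/[K,K]$; consequently $\det(\rho_z)=1$. On the other hand $\det(\rho_z)=\det(-Id_n)=(-1)^n$ by the first part. Therefore $(-1)^n=1$ and $n$ is even.

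I expect no serious obstacle here: both the eigenspace decomposition (via Lemma \ref{diag nxn}) and the center computation are routine, and the determinant argument is the clean idea that forces $n$ even. The only point requiring mild care is checking that no element with nonzero $z$-component can be central, which is exactly where the assumption $\rho_z\neq Id_n$ (equivalently, faithfulness of $\rho$) is used; note also that nilpotency class $2$ of $K$ is not really needed beyond guaranteeing $z\in[K,K]$.
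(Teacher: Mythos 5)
Your proof is correct and follows essentially the same route as the paper's: diagonalizability of $\rho_z$ via Lemma \ref{diag nxn}(ii), the observation that the $(+1)$-eigenspace lies in $Z(\Z_p^n\rtimes\gamma_1(K))=1$, and the determinant homomorphism $\det\circ\rho$ vanishing on $\gamma_1(K)$ to force $(-1)^n=1$. Your explicit computation that $Z(\Z_p^n\rtimes\langle z\rangle)$ equals the fixed space of $\rho_z$ is a slightly more detailed version of the containment the paper asserts, but it is the same argument.
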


\begin{proof}
The order of $z$ is $2$ and so $\rho_z\neq 1$ and it is diagonalizable with eigenvalues $\lambda\in \{1,-1\}$ according to Lemma \ref{diag nxn}(ii). If $V$ is the eigenspace relative to $\lambda=1$ then $V\leq Z(\Z_p^n\rtimes_\rho \gamma_1(K))=1$. Therefore, $\rho_z=-Id_n$. The map $\det:K\longrightarrow \Z_p^*$ mapping $x\in K$ to $\det(\rho_x)$ is a group homomorphisms into an abelian group. Then $\gamma_1(K)\leq \ker{\det}$ and so $\det(\rho_z)=(-1)^n=1$. Therefore, $n$ is even.
\end{proof}

Let us conclude the section by showing some applications of the previous results to some concrete semidirect product, where the normal summand is an elementary abelian $p$-group and the other summand is a $2$-group.

Let $K_{50}$ be extraspecial $2$-group labeled by {\tt SmallGroup(32,50)} in the {\tt SmallGroup} library of GAP. A presentation of $K_{50}$ is the following:
\begin{align*}
    K_{50}=\langle a,b,c,d\,|\,a^2,\,b^4,\,c^4 d^2,\, cbcb^{-1},\,db^{-1}db,\,c^{-1}aca,\,dc^{-1}dc,\,dadb^2a\rangle.
\end{align*}




\begin{proposition}\label{no rep}
    Let $p$ be an odd prime, $G=\Z_p^2\rtimes_\rho K_{50}$ where $\rho$ is a faithful action. Then $Z(\Z_p^2\rtimes_\rho Z(K_{50}))\neq 1$.
\end{proposition}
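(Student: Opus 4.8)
The plan is to reduce the statement to a concrete claim about the single matrix $\rho_z$, where $z$ generates $Z(K_{50})$, and then to rule out the one bad value of $\rho_z$ using the extraspecial structure of $K_{50}$. Since $K_{50}$ is extraspecial of order $32$, it has nilpotency class $2$ and $\gamma_1(K_{50})=Z(K_{50})=\langle z\rangle\cong\Z_2$. As $\rho$ is faithful and $z\neq 1$, the automorphism $\rho_z$ has order exactly $2$, so by Lemma \ref{diag nxn}(ii) it is diagonalizable over $\Z_p$ with eigenvalues in $\{1,-1\}$. A direct computation of the center of $\Z_p^2\rtimes_\rho\langle z\rangle$ shows that it equals $\mathrm{Fix}(\rho_z)\times\{1\}$; hence it is non-trivial precisely when $\rho_z\neq-\mathrm{Id}_2$. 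So the whole proposition is equivalent to showing $\rho_z\neq-\mathrm{Id}_2$.

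I would argue by contradiction: assume $Z(\Z_p^2\rtimes_\rho Z(K_{50}))=1$. Then Corollary \ref{center = -1}, applied with $n=2$ and $K=K_{50}$ (whose hypotheses $\gamma_1(K_{50})=\langle z\rangle\cong\Z_2$ and $\rho$ faithful are met), forces $\rho_z=-\mathrm{Id}_2$. The key point is now to exploit this together with the \emph{non-degenerate} commutator form of the extraspecial group. Recall that $\bar K:=K_{50}/\langle z\rangle\cong\Z_2^4$ carries the non-degenerate alternating form $B$ defined by $[g,h]=z^{B(\bar g,\bar h)}$. I would choose a symplectic basis $\bar g_1,\bar h_1,\bar g_2,\bar h_2$ (so $B(\bar g_i,\bar h_j)=\delta_{ij}$ and $B(\bar g_i,\bar g_j)=B(\bar h_i,\bar h_j)=0$) and lift it to $g_1,h_1,g_2,h_2\in K_{50}$. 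Writing $X=\rho_{g_1}$, $Y=\rho_{h_1}$, $W=\rho_{g_2}$ and using $\rho_z=-\mathrm{Id}_2$, the relation $[g_1,h_1]=z$ translates into $XY=-YX$ (so $X$ and $Y$ anticommute), while $[g_2,g_1]=[g_2,h_1]=1$ gives that $W$ commutes with both $X$ and $Y$.

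The contradiction then comes from Schur's lemma. Two anticommuting invertible $2\times 2$ matrices have no common eigenvector over $\overline{\F_p}$ (a common eigenvector would force a product of eigenvalues to equal its own negative, impossible in odd characteristic), so $\langle X,Y\rangle$ acts absolutely irreducibly on $\overline{\F_p}^2$ and its centralizer consists only of scalars. Since $W$ commutes with $X$ and $Y$, the matrix $W$ must be scalar; but faithfulness of $\rho$ then forces $g_2\in Z(K_{50})=\langle z\rangle$, i.e. $\bar g_2=0$, contradicting that $\bar g_2$ is a basis vector. Hence $\rho_z\neq-\mathrm{Id}_2$, and by the first paragraph the center is non-trivial.

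I expect the main obstacle to be precisely why $K_{50}$ must be treated separately from its companion $K_{49}$: the latter contains a copy of $\Z_2^3$, so the trivial bound of Corollary \ref{ro faithful on Z_p} immediately rules out a faithful action, whereas $K_{50}$ (the ``minus-type'' extraspecial group) has elementary abelian subgroups of rank at most $2$ and no such shortcut is available. The real content is that the rank-$4$ symplectic space $\bar K$ cannot be realized projectively in dimension $2$; the symplectic-basis-plus-Schur argument above is the clean way to package this, and the delicate step is to ensure that the third lift $g_2$ provides a genuinely non-scalar matrix commuting with the irreducible pair $\langle X,Y\rangle$.
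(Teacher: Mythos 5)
Your proof is correct, and its second half takes a genuinely different route from the paper. Both arguments share the same opening move: assume $Z(\Z_p^2\rtimes_\rho Z(K_{50}))=1$ and invoke Corollary \ref{center = -1} to force $\rho_z=-\mathrm{Id}_2$ on the central involution $z$. From there the paper works with the explicit presentation of $K_{50}$ on generators $a,b,c,d$: it diagonalizes $\rho_a$ (via Lemma \ref{diag nxn}), uses the commuting relations $[a,c]=[c,d]=1$ together with the distinctness of eigenvalues to force $\rho_c$ and then $\rho_d$ to be diagonal, and derives a contradiction from $[a,d]=b^2$ mapping to $[\rho_a,\rho_d]=\mathrm{Id}\neq -\mathrm{Id}$. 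You instead exploit the nondegenerate symplectic form on $K_{50}/\langle z\rangle$: a symplectic basis produces an anticommuting pair $X,Y$ generating an absolutely irreducible subgroup of $GL_2(\overline{\F_p})$, Schur's lemma forces the third generator to act by a scalar, and faithfulness then contradicts nondegeneracy. Your version is more conceptual and strictly more general --- it applies verbatim to $K_{49}$ (and to any extraspecial $2$-group of order at least $32$ acting faithfully on $\Z_p^2$ with $z\mapsto -\mathrm{Id}$), so it would let one merge the two cases that Proposition \ref{no LSS} currently separates, where $K_{49}$ is excluded via its $\Z_2^3$ subgroup and Corollary \ref{ro faithful on Z_p}. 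What the paper's computation buys in exchange is brevity and self-containedness given the presentation already written down: it needs no symplectic-basis existence or Schur's lemma, only repeated simultaneous diagonalization. Your closing remark about why no shortcut analogous to Corollary \ref{ro faithful on Z_p} exists for $K_{50}$ (its elementary abelian subgroups have rank at most $2$) correctly identifies why this proposition carries the real content of the exclusion.
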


    \begin{proof}
 Note that $Z(K_{50})=\gamma_1(K_{50})=\langle b^2\rangle$ and that it has order $2$. Assume that $Z(\Z_p^n\rtimes Z(K_{50}))= 1$ then $\rho_{b^2}=-Id_2$ according to Corollary \ref{center = -1}. Since $|a|=2$, $\rho_a$ is diagonalizable (see Lemma \ref{diag nxn}) with eigenvalues equal to $\pm 1$ and $\rho_a\neq \pm Id_2$. In particular, we can assume that 
$$\rho_a=\begin{bmatrix}
1 & 0\\
0 & -1
\end{bmatrix}.$$ 
Since we have that $[a,c]=1$, $c^2=b^2\in Z(K_{50})$ and $c\notin Z(G)$ we have that $[\rho_a,\rho_c]=1$ and $\rho_a^2=\rho_b^2=\rho_z=-Id$. So necessarily  $$\rho_c=\begin{bmatrix}
    x &0 \\
    0 & -x
\end{bmatrix}$$
for $x^2=-1$. Since $[c,d]=1$ we have $[\rho_c,\rho_d]=1$ and so $\rho_d$ must be a diagonal matrix. Therefore, $[\rho_d,\rho_a]=\rho_{[a,d]}=1$, but $[a,d]\neq 1$, which is a contradiction. 
\end{proof}

\subsection{The Group \texorpdfstring{$\mathcal{G}_k\times \Z_2^2$}{GkxZ2xZ2}}\label{grup for sr}

Let $p=1\pmod{3}$ be a prime number and let $k\in \Z_p$ such that $k^3=1$ and $k\neq 1$. Let us consider the following presentation of the group $\mathcal{Q}_8$:
$$\mathcal{Q}_8=\langle x,y,z\,|\, x^2,\, y^2,\, [x,y]z^{-1},\, z^2,\,[z,y],\,[z,x]\rangle.$$

As defined in \cite[Section 7]{LSS}, we have the group $\mathcal{G}_k=\mathbb{Z}_p^2\rtimes_{\rho} \mathcal{Q}_8$, where the action $\rho$ is defined by setting
\begin{displaymath}
\rho_x=\begin{bmatrix}
0 & -1\\
1 & 0
\end{bmatrix},\quad
\rho_{y}=\begin{bmatrix}
k^2 & k\\
k & -k^2
\end{bmatrix},\quad \rho_z=\begin{bmatrix}
-1 & 0\\
0 & -1
\end{bmatrix}.
\end{displaymath}

Note that $Z(\mathcal{G}_k)=1$, so the center of the group $\mathcal{G}_k\times A$ where $A$ is an abelian group is the subgroup $1\times A$ and $\gamma_n(\mathcal{G}_k\times A)=\gamma_n(\mathcal{G}_k)\times 1$ for every $n\geq 1$.
\begin{lemma}
    The automorphisms of the group $G=\mathcal{G}_k\times \Z_2^2$ are of the form
    \begin{align}\label{aut of G_k x Klein}
        f=\begin{cases}
          x\mapsto \widetilde{f}(x)z_1,\\
                  y\mapsto \widetilde{f}(y)z_2,\\
                  z\mapsto \widetilde{f}(z)\\   f|_{\gamma_2(G)}=\widetilde{f}|_{\gamma_2(\mathcal{G}_k)},\\
                  f|_{Z(G)}=F,
        \end{cases}
    \end{align}
    where $\widetilde{f}\in \aut{\mathcal{G}_k}$, $z_1,z_2\in Z(G)$ and $F\in \aut{Z(G)}$. In particular $|\aut{G}|= 2^8 \cdot 3^2\cdot p^2(p-1)$.
\end{lemma}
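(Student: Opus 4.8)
The plan is to exploit two characteristic subgroups of coprime order in $G=\mathcal G_k\times\Z_2^2$: the centre $Z(G)=1\times\Z_2^2\cong\Z_2^2$ (which is $1\times\Z_2^2$ precisely because $Z(\mathcal G_k)=1$) and the lower central term $\gamma_2(G)=\gamma_2(\mathcal G_k)\times 1\cong\Z_p^2$. Since $|Z(G)|=4$ and $|\gamma_2(G)|=p^2$ are coprime, these two subgroups meet trivially, and every automorphism must stabilise each of them.

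First I would derive the shape \eqref{aut of G_k x Klein}. Let $f\in\aut G$. As $Z(G)$ is characteristic, $f$ restricts to $F:=f|_{Z(G)}\in\aut{Z(G)}$, and as $Z(\mathcal G_k)=1$ the quotient $G/Z(G)$ is canonically $\mathcal G_k$, so $f$ descends to $\widetilde f\in\aut{\mathcal G_k}$. Because $\gamma_2(G)\cap Z(G)=1$, the projection $G\to G/Z(G)$ is injective on $\gamma_2(G)$, which forces $f|_{\gamma_2(G)}=\widetilde f|_{\gamma_2(\mathcal G_k)}$ under the identification. For the generators $x,y$ of the $\mathcal Q_8$-factor, $f(x)$ and $\widetilde f(x)$ agree modulo $Z(G)$, so $f(x)=\widetilde f(x)z_1$, $f(y)=\widetilde f(y)z_2$ for unique $z_1,z_2\in Z(G)$; and since $z=[x,y]$ with $z_1,z_2$ central, $f(z)=[f(x),f(y)]=[\widetilde f(x),\widetilde f(y)]=\widetilde f(z)$, so no correction appears on $z$. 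This is exactly \eqref{aut of G_k x Klein}.

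For the converse and the enumeration I would package the data as the homomorphism $\Phi\colon\aut G\to\aut{\mathcal G_k}\times\aut{Z(G)}$, $f\mapsto(\widetilde f,F)$. As $G=\mathcal G_k\times\Z_2^2$ is a genuine direct product, each pair is realised by the product automorphism $\widetilde f\times F$, so $\Phi$ is surjective. Its kernel consists of the maps fixing $Z(G)$ pointwise and acting trivially modulo $Z(G)$, i.e.\ $x\mapsto xz_1$, $y\mapsto yz_2$ ($z_1,z_2\in Z(G)$) with the identity on $\gamma_2(G)$; equivalently $\ker{\Phi}\cong\mathrm{Hom}(\mathcal G_k^{\mathrm{ab}},Z(G))$, and since $\mathcal G_k^{\mathrm{ab}}\cong\mathcal Q_8^{\mathrm{ab}}\cong\Z_2^2$ this has order $2^4$. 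Each such map respects the defining relations (every $z_i$ is central of order dividing $2$), so all $2^4$ are valid. Hence
\[
|\aut G| = |\ker{\Phi}|\cdot|\aut{\mathcal G_k}|\cdot|\aut{Z(G)}| = 2^4\cdot|\aut{\mathcal G_k}|\cdot 6 .
\]

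The substantive input is thus $|\aut{\mathcal G_k}|=2^3\cdot3\cdot p^2(p-1)$, which I would get from the structure $\mathcal G_k=\Z_p^2\rtimes_\rho\mathcal Q_8$ with $\Z_p^2$ the characteristic Sylow $p$-subgroup: restriction/projection fits $\aut{\mathcal G_k}$ into an extension with kernel the cocycles $Z^1(\mathcal Q_8,\Z_p^2)$ of order $p^2$ (the fixed space is $0$ as $\rho_z=-\mathrm{Id}$, and $H^1$ vanishes by coprimality of $8$ and $p$) over the group of compatible pairs $(\psi,\sigma)\in GL_2(\F_p)\times\aut{\mathcal Q_8}$ satisfying $\psi\rho_h\psi^{-1}=\rho_{\sigma(h)}$. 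Since $\mathcal Q_8$ has a unique $2$-dimensional irreducible representation, which is absolutely irreducible for odd $p$, every $\sigma\in\aut{\mathcal Q_8}\cong S_4$ is compatible and $\psi$ is determined up to the scalar centraliser $\F_p^\ast$, giving $|\aut{\mathcal Q_8}|\cdot(p-1)=24(p-1)$ pairs and $|\aut{\mathcal G_k}|=p^2\cdot24(p-1)$. Substituting yields $|\aut G|=2^4\cdot24(p-1)p^2\cdot6=2^8\cdot3^2\cdot p^2(p-1)$. I expect the main obstacle to be this representation-theoretic step (that the centraliser of $\rho(\mathcal Q_8)$ is exactly the scalars and that all of $\aut{\mathcal Q_8}$ lifts); alternatively the value of $|\aut{\mathcal G_k}|$ can simply be quoted from \cite{LSS}, leaving only the clean direct-product reduction of the first three paragraphs.
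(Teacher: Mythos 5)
Your proof is correct, and its skeleton is the same as the paper's: both directions rest on the fact that $Z(G)$, $\gamma_1(G)$ and $\gamma_2(G)$ are characteristic, and the order is obtained in both cases as $|\aut{\mathcal{G}_k}|\cdot |Z(G)|^2\cdot|\aut{Z(G)}|=24p^2(p-1)\cdot 2^4\cdot 6=2^8\cdot 3^2\cdot p^2(p-1)$. The genuine differences are in how the converse is verified and where $|\aut{\mathcal{G}_k}|$ comes from. The paper checks, relation by relation, that every map of the displayed form preserves the presentation of $G$; you instead package the converse into the split exact sequence $1\to\mathrm{Hom}(\mathcal{G}_k^{\mathrm{ab}},Z(G))\to\aut{G}\to\aut{\mathcal{G}_k}\times\aut{Z(G)}\to 1$, getting surjectivity from product automorphisms and identifying the kernel of order $2^4$ (using $\mathcal{G}_k^{\mathrm{ab}}\cong\Z_2^2$, which holds because $\rho_z=-\mathrm{Id}$ forces $\Z_p^2\leq\gamma_1(\mathcal{G}_k)$) with the central corrections $z_1,z_2$; the only point you must and do supply is that $g\mapsto g\,\delta(g)$ is bijective for a central-valued homomorphism $\delta$ vanishing on $Z(G)$. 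This buys a cleaner argument that avoids the explicit relation check, and it makes transparent why no correction appears on $z=[x,y]$. Second, the paper simply quotes $|\aut{\mathcal{G}_k}|=24p^2(p-1)$ from \cite{LSS}, whereas you re-derive it via $Z^1(\mathcal{Q}_8,\Z_p^2)$ (of order $p^2$ since the fixed space vanishes and $H^1=0$ by coprimality) and the compatible pairs; the representation-theoretic input you flag --- that the $2$-dimensional representation of $\mathcal{Q}_8$ is absolutely irreducible over $\F_p$ for odd $p$, so the centraliser is the scalars and all of $\aut{\mathcal{Q}_8}\cong S_4$ lifts --- is correct because the quaternion algebra splits over every finite field. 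Either route (re-deriving or citing \cite{LSS}, as you also propose) closes the argument.
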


\begin{proof}
  Let $f\in \aut{G}$ then clearly $f$ is as in \eqref{aut of G_k x Klein} since $Z(G)$, $\gamma_1(G)$ and $\gamma_2(G)$ are characteristic subgroups. On the other hand, let $f$ be defined as in \eqref{aut of G_k x Klein}. We must check that all relations in the presentation of $G$ are preserved by $f$. 
  \begin{align*}
      [f(x),f(y)]&=[\widetilde{f}(x)z_1,\widetilde{f}(x)z_2]=[\widetilde{f}(x),\widetilde{f}(y)]=\widetilde{f}([x,y])=\widetilde{f}(z)=f(z)=f([x,y]).\\
      f(x)^4&=\widetilde{f}(x)^4 z_1^4=\widetilde{f}(x)^4=1=f(x^4)\\
      f(y)^4&=\widetilde{f}(y)^4 z_2^4=\widetilde{f}(y)^4=1=f(y^4)\\
      f(v^p)&=f(v)^p=f(z^2)=f(z)^2=1, \\
      f(x) f(v) f(x)^{-1}&=\widetilde{f}(x)^{-1}z_1\widetilde{f}(v)z_1^{-1}\widetilde{f}(x)^{-1}=\widetilde{f}(xvx^{-1})=f(xvx^{-1}),\\
      f(y) f(v) f(y)^{-1}&=\widetilde{f}(y)^{-1}z_2\widetilde{f}(v)z_2^{-1}\widetilde{f}(y)^{-1}=\widetilde{f}(yvy^{-1})=f(yvy^{-1}),\\
    [f(z),f(t)]&=1=f([x,t]),
  \end{align*}
for every $v\in \gamma_2(G)$, $z\in Z(G)$ and $t\in G$. Therefore, $f\in \aut{G}$. The second statement is straightforward, using that $|\aut{\mathcal{G}_k}|=24p^2(p-1)$ (see Proposition 7.5 of \cite{LSS}) and $|\aut{\Z_2^2}|=6$.
\end{proof}

Let us define $\mathcal{F}=\setof{h\in \aut{\mathcal{G}_k}}{|Fix(h)|=2p,\, |h|=3}$ and $\mathcal{H}=\setof{h\in \aut{G}}{h_{Z(G)}\in \mathcal{F},\, |h|_{Z(G)}|=3}$. According to \cite[Lemma 7.7]{LSS} $|\mathcal{F}|=16p$ and consequently, $|\mathcal{H}|=16p\cdot 16\cdot 2=2^9p$. The representatives of conjugacy classes of the elements in $\mathcal{F}$ are 
\begin{align} 
        f(j)=\begin{cases}
          x\mapsto y,\\
                  y\mapsto xy,\\
                  z\mapsto z\\   f|_{\gamma_2(\mathcal{G}_k)}=F_j=\begin{bmatrix}
                      -k(1+k^j) & -(1+k^j)\\
                      0 & -k^2(1+k^j)
                  \end{bmatrix},
       \end{cases}
    \end{align}
for $j=1,2$, see the proof of \cite[Theorem 5.5]{LSS}. Let us also define
\begin{align*}
    M=\begin{bmatrix}
        0& 1\\
        1 & 1
    \end{bmatrix}\in GL_2(2).
\end{align*}
    
\begin{lemma}\label{iso classes of sr}
   The representatives of the conjugacy classes of elements of $\mathcal{H}$ are of the form
   \begin{align*}
        f(j,a)=\begin{cases}
          x\mapsto ya,\\
                  y\mapsto xy ,\\
                  z\mapsto z\\   f|_{\gamma_2(G)}=F_j,\\
                  f|_{Z(G)}=M,
        \end{cases}
    \end{align*}
   for $j=1,2$ and $a\in \{(0,0),(1,0)\}$.
\end{lemma}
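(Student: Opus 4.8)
The plan is to realize $\aut{G}$ as a concrete group of triples and reduce the statement to a finite orbit count over $\F_2$. By \eqref{aut of G_k x Klein} every $f\in\aut{G}$ is determined by a triple $(\widetilde f,\phi,F)$, where $\widetilde f\in\aut{\mathcal{G}_k}$ is the automorphism induced on $G/Z(G)\cong\mathcal{G}_k$, $F=f|_{Z(G)}\in\aut{Z(G)}=GL_2(2)$, and $\phi\colon\mathcal{G}_k\to Z(G)$ is the homomorphism recording the $Z(G)$-component of $f$, so $\phi(x)=z_1$ and $\phi(y)=z_2$. Since $\gamma_2(\mathcal{G}_k)=\Z_p^2$ has odd order and $\mathcal{Q}_8/\langle z\rangle\cong\Z_2^2$ is the abelianization of $\mathcal{Q}_8$, the map $\phi$ factors through $\Z_2^2$ and is encoded by the matrix $\Phi=[\,z_1\mid z_2\,]\in M_2(\F_2)$; thus $\aut{G}\cong\mathrm{Hom}(\mathcal{G}_k,Z(G))\rtimes(\aut{\mathcal{G}_k}\times GL_2(2))$. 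Writing $\pi\colon\aut{\mathcal{G}_k}\to GL_2(2)$ for the induced action on $\mathcal{Q}_8/\langle z\rangle$ and $\Psi$ for the matrix of $\psi$, the composition law yields, for conjugation of $(\widetilde f,\Phi,F)$ by $(\widetilde g,\psi,H)$, the triple with outer entries $\widetilde g\widetilde f\widetilde g^{-1}$ and $HFH^{-1}$ and middle entry
\[
\Phi'=\bigl(H\Phi+\Psi\,\pi(\widetilde f)+HFH^{-1}\Psi\bigr)\,\pi(\widetilde g)^{-1}.
\]
Since conjugation preserves both the order of $F$ and the $\aut{\mathcal{G}_k}$-class of $\widetilde f$, the set $\mathcal{H}$ is stable under conjugation, so its conjugacy classes are well-defined.

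First I would normalize the two outer entries. The components $\widetilde g,H$ vary independently, so I can act on $\widetilde f$ and on $F$ separately. The $\mathcal{F}$-classes in $\aut{\mathcal{G}_k}$ are represented by $f(1),f(2)$, and all elements of order $3$ in $GL_2(2)\cong S_3$ are conjugate, with representative $M$; hence every element of $\mathcal{H}$ is conjugate to one with $\widetilde f=f(j)$ and $F=M$, and $j\in\{1,2\}$ is a genuine invariant because $f(1),f(2)$ are not conjugate. From $x\mapsto y$, $y\mapsto xy$ one reads off $\pi(f(j))=M$ for both $j$, so the residual reduction of $\Phi$ is identical in the two cases.

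It then remains to compute the orbits of $\Phi\in M_2(\F_2)$ under the stabilizer of $(f(j),M)$, i.e. under $\widetilde g\in C_{\aut{\mathcal{G}_k}}(f(j))$, $H\in C_{GL_2(2)}(M)=\langle M\rangle$ and arbitrary $\psi$. Since $\pi(f(j))=HFH^{-1}=M$, the formula collapses to $\Phi\mapsto\bigl(H\Phi+\mathrm{ad}_M(\Psi)\bigr)\pi(\widetilde g)^{-1}$ with $\mathrm{ad}_M(\Psi)=M\Psi+\Psi M$ and $H,\pi(\widetilde g)\in\langle M\rangle$ (note $f(j)\in C(f(j))$ realizes $\pi(\widetilde g)=M$). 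The minimal polynomial of $M$ over $\F_2$ is $t^2+t+1$, so $\ker\mathrm{ad}_M=\F_2[M]$ is $2$-dimensional and $W:=\mathrm{Im}(\mathrm{ad}_M)$ is a $2$-dimensional subspace; as $M$ commutes with itself, $W$ is invariant under left and right multiplication by $\langle M\rangle$. Hence the action descends to $M_2(\F_2)/W\cong\F_2^2$, translations by $W$ act transitively inside each coset, and a direct check shows that left multiplication by $M$ cyclically permutes the three nonzero cosets of $W$. Consequently $M_2(\F_2)$ has exactly two orbits, $W$ and its complement, represented by $\Phi=0$ and $\Phi=E_{11}$; these are precisely $z_2=0$ with $z_1=a\in\{(0,0),(1,0)\}$.

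Combining the two reductions gives exactly $2\times 2=4$ conjugacy classes in $\mathcal{H}$, with representatives $f(j,a)$ for $j\in\{1,2\}$ and $a\in\{(0,0),(1,0)\}$, pairwise non-conjugate because they differ in the invariant $j$ or in the $\Phi$-orbit. I expect the main obstacle to be setting up the conjugation formula correctly: one must carefully distinguish the source copy of $\Z_2^2$ (the index $\langle\bar x,\bar y\rangle$, on which $\pi(\widetilde g)$ acts on the right) from the target copy $Z(G)$ (on which $H$ and $F$ act on the left), and verify that $W=\mathrm{Im}(\mathrm{ad}_M)$ is stable under $\langle M\rangle$ on both sides. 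Once this $\F_2$-linear action is correctly identified, the orbit count is a short finite verification.
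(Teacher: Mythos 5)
Your proof is correct, and it takes a genuinely different route from the paper. The paper proves the lemma by a class-equation count: it first separates the four candidates using the invariants $j$ (non-conjugacy of $f(1),f(2)$ in $\aut{\mathcal{G}_k}$) and the order of the automorphism ($3$ versus $6$), then explicitly lists the possible shapes of centralizing automorphisms to compute $|C(f(j,a))|$ in each case, and finally checks that $\sum_j\sum_a |\aut{G}|/|C(f(j,a))| = 2^9p = |\mathcal{H}|$, so the four classes exhaust $\mathcal{H}$. You instead compute the orbits of the conjugation action directly, by encoding $\aut{G}$ as $\mathrm{Hom}(\mathcal{G}_k,Z(G))\rtimes(\aut{\mathcal{G}_k}\times GL_2(2))$, normalizing the outer components to $(f(j),M)$, and reducing the residual problem to an affine action on $M_2(\F_2)$ whose orbit structure is governed by $W=\mathrm{Im}(\mathrm{ad}_M)$; I checked the cocycle formula, the identities $\ker\mathrm{ad}_M=\F_2[M]$, the two-sided $\F_2[M]$-invariance of $W$, and the cyclic permutation of the three nonzero cosets, and they all hold (the cleanest way to see the last point is that $M_2(\F_2)/W$ is a one-dimensional $\F_4$-vector space on which $M$ acts as a generator of $\F_4^*$). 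What the paper's approach buys is the explicit centralizer orders, which are reused implicitly via the $|\mathcal{H}|=2^9p$ bookkeeping inherited from \cite{LSS}; what your approach buys is independence from those cardinality computations (you only need that $f(1),f(2)$ represent the classes in $\mathcal{F}$ and that order-$3$ elements of $GL_2(2)$ are conjugate), at the cost of having to set up the conjugation formula in the semidirect product carefully — which, as you anticipated, is the only delicate step, and you got it right.
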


\begin{proof}
Note that $f(j,a)_{Z(G)}=f(j)$ as defined in \eqref{fj} for $j=1,2$. Clearly the representatives listed in the statement are not conjugate. Indeed if $f(l,a)$ and $f(k,a')$ are conjugate then $f(l,a)_{Z(G)}=f(l)$ and $f(k,a')_{Z(G)}=f(k)$ are conjugate in $\aut{\mathcal{G}_k}$ and so $l=k$. Moreover $|f(l,(0,0))|=3$ and $|f(l,(1,0))|=6$, so they are not conjugate.

Let us compute the centralizer of $f(j,a)$ in $\aut{G}$. If $h\in C(f(j,a))$ then $h|_{Z(G)}$ centralizes $M$ and so $h|_{Z(G)}\in \langle M\rangle$. Moreover $h_{Z(G)}$ centralizes $f(j)$ in $\aut{\mathcal{G}_k}$ and therefore we have that $h$ is one of the following:
\begin{align*}\label{aut of G_k x Klein}
        h_1&=\begin{cases}
          x\mapsto xv_1 z_1,\\
                  y\mapsto y v_2 z_2',\\
                  z\mapsto z w\\   h_1|_{\gamma_2(G)}=\begin{bmatrix}
                      x & 0\\
                      0 & x
                  \end{bmatrix},\\
                  h_2|_{Z(G)}=M^s
       \end{cases},\qquad
       h_2&=\begin{cases}
          x\mapsto xy v_1 z_1',\\
                  y\mapsto x v_2 z_2',\\
                  z\mapsto z w\\   h_2|_{\gamma_2(G)}=\begin{bmatrix}
                      x & -xk\\
                      0 & xk^2
                  \end{bmatrix},\\
                  h_2|_{Z(G)}=M^s
       \end{cases},\qquad
        h_3&=\begin{cases}
          x\mapsto y v_1 z_1',\\
                  y\mapsto xy v_2 z_2',\\
                  z\mapsto z w\\   h_3|_{\gamma_2(G)}=\begin{bmatrix}
                      x & xk\\
                      0 & xk^2
                  \end{bmatrix},\\
                  h_2|_{Z(G)}=M^s
       \end{cases}
    \end{align*}
    for $x\neq 0$, $v_1,v_2,w\in \Z_p^2$, $z_1',z_2'\in \Z_2^2$ and $s=0,1,2$. It is easy to check that if $a=(0,0)$ then $s$ and $z_1'$ can be chosen freely and $z_2'$ is uniquely determined in all three cases. Therefore, $|C(f(j,(0,0),(0,0)))|=|C(f_j)|\cdot 3\cdot 4$. If $z_1=(1,0)$ then in all three cases $s$ and $z_2'$ are uniquely determined and $z_1'$ is free. Therefore $|C(f(j,(1,0),(0,0)))|=|C(f_j)|\cdot 4$. The set $\mathcal{H}$ is invariant under the action by $\aut(\mathcal{G}_k \times \Z_2^2)$ and so the conjugacy class of $f(j,a)$ lies in $\mathcal{H}$. So, given that $|C(f(j,a)_{Z(G)})|=3p(p-1)$ (see \cite[Lemma 7.8]{LSS}) we have that
    \begin{align*}
    2\left(\frac{|\aut{G}|}{|C(f(j,(0,0),(0,0)))|}+\frac{|\aut{G}|}{|C(f(j,(1,0),(0,0)))|}\right)=2( 2^6p+3\cdot 2^6 p)=2^9p=|\mathcal{H}|.    
    \end{align*}
    Therefore the automorphisms in the statement are a set of representatives of conjugacy classes of elements of $\mathcal{H   }$.
    \end{proof}


%
 

 
 %

\section*{Acknowledgments}
\noindent The author Filippo Spaggiari was supported by GAUK (301-10/252012, project number 166122) and the GAČR grant no. 22-19073S.

\bibliographystyle{amsalpha}
\bibliography{references} 

\end{document}